\documentclass[sn-apa]{sn-jnl}

\usepackage{graphicx}
\usepackage{geometry}
\usepackage{subfigure}
\usepackage{graphicx}
\usepackage{multirow}%
\usepackage{amsmath,amssymb,amsfonts}%
\usepackage{amsthm}%
\usepackage{mathrsfs}%
\usepackage[title]{appendix}%
\usepackage{xcolor}%
\usepackage{textcomp}%
\usepackage{manyfoot}%
\usepackage{booktabs}%
\usepackage{algorithm}%
\usepackage{algorithmicx}%
\usepackage{algpseudocode}%
\usepackage{listings}%
\usepackage{lipsum}
\usepackage{float}
\usepackage{pdflscape}
\usepackage{lineno}
\usepackage{enumerate}
\usepackage{mathtools}
\usepackage{enumitem}
\usepackage{bbm}
\usepackage{comment}
\usepackage{tikz}
\usepackage{makecell}

\newcommand{\C}{\mathbb{C}}
\newcommand{\E}{\mathbb{E}}
\newcommand{\N}{\mathbb{N}}
\renewcommand{\P}{\mathbb{P}}
\newcommand{\Q}{\mathbb{Q}}
\newcommand{\R}{\mathbb{R}}

\newcommand{\V}{\mathbb{V}}
\newcommand{\X}{\mathbb{X}}
\newcommand{\Z}{\mathbb{Z}}
\newcommand{\cA}{{\cal A}}
\newcommand{\cC}{{\cal C}}
\newcommand{\cE}{{\cal E}}
\newcommand{\cG}{{\cal G}}

\newcommand{\cM}{{\cal M}}

\newcommand{\bDelta}{\boldsymbol{\Delta}}
\newcommand{\blambda}{\boldsymbol{\lambda}}
\newcommand{\bLambda}{\boldsymbol{\Lambda}}
\newcommand{\brho}{\boldsymbol{\rho}}
\newcommand{\bSigma}{\boldsymbol{\Sigma}}
\newcommand{\bx}{\boldsymbol{x}}
\newcommand{\bz}{\boldsymbol{z}}
\newcommand{\bA}{\boldsymbol{A}}
\newcommand{\bB}{\boldsymbol{B}}
\newcommand{\bG}{\boldsymbol{G}}
\newcommand{\bJ}{\boldsymbol{J}}
\newcommand{\bR}{\boldsymbol{R}}
\newcommand{\bZ}{\boldsymbol{Z}}
\newcommand{\bgL}{\boldsymbol{\Lambda}}
\theoremstyle{thmstyleone}%
\newtheorem{theorem}{Theorem}

\newtheorem{corollary}{Corollary}

\theoremstyle{thmstyletwo}%
\newtheorem{lemma}{Lemma}%
\newtheorem{example}{Example}%
\newtheorem{remark}{Remark}%

\theoremstyle{thmstylethree}%
\newtheorem{definition}{Definition}
\begin{document}

\title[Article Title]{On the compatibility between the spatial moments and the codomain of a real random field}

\author*[1,2]{\fnm{Xavier} \sur{Emery}}\email{xemery@ing.uchile.cl}

\author[3]{\fnm{Christian} \sur{Lantu\'ejoul}}\email{christian.lantuejoul@minesparis.psl.eu}

\affil[1]{\orgdiv{Department of Mining Engineering}, \orgname{Universidad de Chile}, \orgaddress{\city{Santiago}, \country{Chile}}}

\affil[2]{\orgdiv{Advanced Mining Technology Center}, \orgname{Universidad de Chile}, \orgaddress{\city{Santiago}, \country{Chile}}}

\affil[3]{\orgdiv{Statistiques et Images}, \orgname{Mines ParisTech, PSL University}, \orgaddress{\city{Paris}, \country{France}}}

\abstract{While any symmetric and positive semidefinite mapping can be the non-centered covariance of a Gaussian random field, it is known that these conditions are no longer sufficient when the random field is valued in a two-point set. The question therefore arises of what are the necessary and sufficient conditions for a mapping $\rho: \X \times \X \to \R$ to be the non-centered covariance of a random field with values in a subset ${\cE}$ of $\R$. Such conditions are presented in the general case when ${\cE}$ is a closed subset of the real line, then examined for some specific cases. In particular, if ${\cE}=\R$ or $\Z$, it is shown that the conditions reduce to $\rho$ being symmetric and positive semidefinite. If ${\cE}$ is a closed interval or a two-point set, the necessary and sufficient conditions are more restrictive: the symmetry, positive semidefiniteness, upper and lower boundedness of $\rho$ are no longer enough to guarantee the existence of a random field valued in ${\cE}$ and having $\rho$ as its non-centered covariance. Similar characterizations are obtained for semivariograms and higher-order spatial moments, as well as for multivariate random fields.}

\keywords{positive semidefiniteness, complete positivity, corner positive inequalities, gap inequalities, Mercer's condition.}

\maketitle


\section{Introduction}

This article deals with fundamental aspects in the modeling of random fields defined on an index set $\X$ and valued in a set $\cE$. Throughout, $\X$ will be an arbitrary finite or infinite set of points, such as a plane, a sphere, or the vertices of a finite graph, to name a few examples. As for the set of destination or \emph{codomain} $\cE$, it will be a subset of $\R$, i.e., the random field is real-valued, which is the most common situation in applications of spatial statistics \citep{chiles_delfiner_2012}, mathematical morphology \citep{Serra1982}, stochastic geometry \citep{Chiu2013}, machine learning \citep{Scholkopf}, and scientific computing \citep{Ghanem}.

A \emph{random field} $Z$ with index set $\X$ and codomain $\cE$ is a collection of random variables defined on the same probability space $(\Omega,{\cA},\P)$. Informally, it can be thought of as a random vector whose components are valued in $\cE$, except that the number of such components (the cardinality of $\X$) can be infinite. For a formal definition, let us endow the real line $\R$ with the usual topology and pose
\begin{equation*}
    \begin{split}
        Z: (\X,\Omega) &\to \cE\\
           (x,\omega) &\mapsto Z(x,\omega) = \omega(x),
    \end{split}
\end{equation*}
where:
\begin{itemize}
    \item for fixed $x$, the mapping $\omega \mapsto Z(x,\omega)$ is a random variable on $(\Omega,{\cA},\P)$; 
    \item for fixed $\omega$, the mapping $x \mapsto Z(x,\omega)$ is called a \emph{realization} (aka a trajectory or a sample path) of the random field; 
    \item $\Omega = {\cE}^\X$, set of all possible realizations of the random field, is called the \emph{sample space};
    \item ${\cA}$ is the Borel $\sigma$-algebra of $\Omega$, called the \emph{event space}, an event being a Borel subset of the sample space;
    \item $\P$ is a probability measure that assigns a probability between $0$ and $1$ to each element of the event space:
    \begin{equation*}
        \begin{split}
            \P: \cA &\to [0,1]\\
            A &\mapsto \P(A)=\int_{\omega \in A} \P({\rm d}\omega),
        \end{split}
    \end{equation*}
    with $\P$ being countably additive and $\P(\Omega)=1$.
\end{itemize}

\medskip

Provided that the random variable $Z(x,\cdot)$ is square integrable with respect to the probability measure $\P$ for all $x \in \X$, the real-valued random field possesses a finite expectation and a finite variance at every point of $\X$, as well as a finite (auto)covariance function and a semivariogram for every pair of 
points. Covariance functions and semivariograms are the fundamental tools in many disciplines dealing with random fields, in particular, they are the building 
blocks of the kriging technique in spatial statistics. The expectation, variance, centered covariance, non-centered covariance, and semivariogram are defined as:
\begin{equation*}
    \begin{split}
        \E(Z(x,\cdot)) &:= \int_{\Omega} Z(x,\omega) \P({\rm d}\omega), \quad x \in \X,\\
        \V(Z(x,\cdot)) &:= \int_{\Omega} Z^2(x,\omega) \P({\rm d}\omega) - \E^2(Z(x,\cdot)), \quad x \in \X,\\
        \C(Z(x,\cdot), Z(y,\cdot)) &:= \int_{\Omega} Z(x,\omega) Z(y,\omega) \P({\rm d}\omega) - \E(Z(x,\cdot)) \, \E(Z(y,\cdot)), \quad x, y \in \X, \\
        \rho(x,y) &:= \int_{\Omega} Z(x,\omega) Z(y,\omega) \P({\rm d}\omega), \quad x, y \in \X,\\
        g(x,y) &:= \frac{1}{2} \int_{\Omega} [Z(x,\omega)-Z(y,\omega)]^2 \P({\rm d}\omega) - \frac{1}{2} \E^2(Z(x,\omega)-Z(y,\omega)), \quad x, y \in \X,
    \end{split}
\end{equation*}
respectively. For the semivariogram to exist, the assumption of square integrability of $Z(x,\cdot)$ at any $x \in \X$ can be relaxed to that of square integrability 
of the increment $Z(x,\cdot)-Z(y,\cdot)$ for any pair $(x,y) \in \X^2$.

\medskip

The knowledge of the expectation and the non-centered covariance is enough to determine the variance, centered covariance and semivariogram. However, if any function defined on $\X$ with codomain ${\cE}$ can be the expectation of a random field on $\X$, the conditions for a function $\rho$ defined on $\X \times \X$ to be the non-centered covariance function of some random field on $\X$ are largely unknown. A necessary condition \citep{Schoenberg1938} is that $\rho$ must be symmetric and positive semidefinite:
\begin{enumerate}
    \item Symmetry: $\rho(x,y) = \rho(y,x)$ for any $x,y \in \X$.
    \item Positive semidefiniteness: For any positive integer $n$, any set of points $x_1,\ldots,x_n$ in $\X$ and any set of real numbers $\lambda_1,\ldots,\lambda_n$, one has 
        \begin{equation}
        \label{PSD}
            \sum_{k=1}^n \sum_{\ell=1}^n \lambda_{k} \, \lambda_{\ell} \, \rho(x_k,x_\ell) \geq 0.
        \end{equation}
\end{enumerate}

Furthermore, owing to the Daniell-Kolmogorov extension theorem \citep{Billingsley1995}, these conditions are sufficient to ensure the existence of a zero-mean Gaussian random field with covariance function $\rho$, that is, they ensure the compatibility between $\rho$ and a codomain consisting of the entire real line: ${\cE} = \R$. 

Yet, things are much less simple when the codomain is a strict subset of $\R$. A particular case that has been widely examined in the literature is that of the two-point set ${\cE}=\{-1,1\}$, for which conditions on $\rho$ stronger than positive semidefiniteness have been established \citep{McMillan1955, Shepp1967, Matheron1989, Matheron1993, Armstrong1992, Quintanilla2008, Emery2010, Lachieze2015}. The characterization of compatible non-centered covariances for other codomains, such as bounded or half-bounded intervals, is a longstanding problem \citep{Slepian1972} and, to the best of the authors' knowledge, is still unsolved. 
The authors are only aware of the works of \cite{Sondhi1983}, who proposed to generate a random field with a given marginal distribution and a given covariance function by transforming a Gaussian random field,  \cite{Matheron1989}, who examined the compatibility between a covariance model and a given class of positively valued random fields (lognormal random fields in Euclidean spaces), and \cite{Muller2012}, who proposed to generate random fields on the real line valued in $[-1,1]$ with a prescribed stationary covariance function via a spectral simulation method. However, all these works consider specific marginal distributions for the random field, which goes beyond the definition of its codomain.

\medskip

In this context, this article deals with the problem of determining necessary and sufficient conditions that ensure the compatibility between a non-centered covariance function---or other structural tools such as the semivariogram or higher-order spatial moments---and the codomain or set of destination of a random field. We stress that our results apply to real-valued random fields defined on any set of points $\X$; the ambient space containing these points (e.g., a Euclidean space, a sphere, or a graph) is of little importance.

The outline is as follows: Section \ref{background} provides some background material and introduces quantities associated with a codomain and with a real matrix or a real function, which will be referred to under the term \emph{gap}. These quantities will play a key role in the characterization of non-centered covariances, semivariograms, and higher-order moments, as will be presented in Sections \ref{discretesett} (based on matrix gaps) and \ref{continuousett} (based on function gaps). Concluding remarks are given in section \ref{conclusi}. Particular codomains (entire real line, set of relative integers with or without the zero element, two-point sets, bounded intervals, and non-negative half-line) are examined in Appendix \ref{app:particularcases}. The proofs of lemmas and theorems are deferred to Appendix \ref{app:proofs} to ease exposition.


\section{Background material}
\label{background}

Notation: Throughout, an element of $\R^n$ will be represented by a row vector, i.e., a vector whose components are arranged horizontally.

\subsection{Definitions}

\begin{definition}[Trace inner product]
   For any positive integer $n$, the space of real matrices of order $n$ can be endowed with a scalar product called \emph{trace inner product}:
   \begin{equation*}
       \langle \bA,\bB \rangle = \text{tr}(\bA \, \bB^\top) = \sum_{k=1}^n \sum_{\ell=1}^n a_{k \ell} \, b_{k \ell},
   \end{equation*}
   where $\text{tr}(\cdot)$ is the trace operator, $\top$ the transposition, $\bA=[a_{k\ell}]_{k,\ell=1}^n$ and $\bB=[b_{k\ell}]_{k,\ell=1}^n$.\\    
\end{definition}

\begin{definition}[$\gamma$-gap of a real square matrix]
\label{defgap1}
    Let $\cE$ be a subset of $\R$, $n$ be a positive integer, and $\bLambda = [\lambda_{k\ell}]_{k,\ell=1}^n$ be a real square matrix (real-valued two-dimensional array). We define the \emph{$\gamma$-gap} of $\bLambda$ on $\cE$ as
    \begin{equation}
        \label{2Dgap}
        \gamma(\bLambda,{\cE}) = \inf \{ \bz \bLambda \bz^\top: \bz \in {\cE}^n \}.\\
    \end{equation}
\end{definition}

Because the trace is invariant under a cyclic permutation, one can also write:
\begin{equation*}
        \gamma(\bLambda,{\cE}) = \inf \{ \langle \bLambda, \bz^\top \, \bz \rangle: \bz \in {\cE}^n \}.\\
\end{equation*}

The terminology `gap' is borrowed from the concept of gap introduced by \cite{Laurent1996} for a vector (one-dimensional array) of integers $\blambda \in \Z^n$:
\begin{equation*}
   \zeta(\blambda,\{-1,1\}) = \inf \{\lvert \bz \blambda^\top \rvert: \bz \in \{-1,1\}^n \}. 
\end{equation*}

The connection between this vector $\zeta$-gap and our matrix $\gamma$-gap is as follows: if $\blambda \in \Z^n$, $\bLambda = \blambda^\top \blambda$ and $\cE = \{-1,1\}$, then $\gamma(\bLambda,\cE) = \zeta^2(\blambda,\cE)$. \\

\begin{definition}[$\gamma$-gap of a multidimensional array]
\label{defgap3}
    Let $\cE$ be a subset of $\R$, $n$ and $q$ be positive integers, and $\bLambda= [\lambda_{k_1,\ldots,k_q}]_{k_1,\ldots,k_q =1}^n$ be a real-valued $q$-dimensional array. We define the \emph{$\gamma$-gap} of $\bLambda$ on $\cE$ as
    \begin{equation}
        \label{qDgap}
        \gamma(\bLambda,{\cE}) = \inf \left\{ \sum_{k_1=1}^n \ldots \sum_{k_q=1}^n \lambda_{k_1,\ldots,k_q} \, z_{k_1}\ldots z_{k_q} : (z_{1},\ldots, z_{n}) \in {\cE}^n\right\}.
    \end{equation}
For $q=2$, this definition matches Definition \ref{defgap1}.\\
\end{definition}

\begin{definition}[$\eta$-gap of a real square matrix]
\label{defgap2}
    Let $\cE$ be a subset of $\R$, $n$ be a positive integer, and $\bLambda = [\lambda_{k\ell}]_{k,\ell=1}^n$ be a real square matrix (real-valued two-dimensional array). We define the \emph{$\eta$-gap} of $\bLambda$ on $\cE$ as
    \begin{equation}
        \label{etagap}
        \eta(\bLambda,{\cE}) = \sup \left\{ \frac{1}{2} \sum_{k=1}^n \sum_{\ell=1}^n \lambda_{k \ell} \, [z_{k}-  z_{\ell}]^2 : (z_{1}\ldots z_{n}) \in {\cE}^n \right\}.
    \end{equation}
\end{definition}

When no confusion arises, we will simply write `gap' of $\bLambda$ on $\cE$ without specifying whether it is the $\gamma$-gap or the $\eta$-gap.\\

The previous notions of gaps of matrices generalize to that of gaps of functions belonging to a suitable Hilbert space, as per the following definitions.\\

\begin{definition}[Hilbert space of square integrable functions of $\X^2$]
    Let $\mu$ be a positive measure on $\X^2$. We define $L^2(\X^2,\mu)$ as the space of real-valued functions defined on $\X \times \X$ that are square integrable with respect to $\mu$, endowed with the scalar product
\begin{equation*}
    \langle f,g \rangle_{\mu} = \int_{\X^2} f(x,y)\, g(x,y) \, {\rm d}\mu(x,y), \quad f, g \in L^2(\X^2,\mu),
\end{equation*} 
and with the norm $\| f \|_{\mu} = \sqrt{\langle f,f \rangle_{\mu}}$.\\
\end{definition}

\begin{definition}[$\gamma$-gap of a real function]
\label{defgap10}
Let $\cE$ be a subset of $\R$, $\mu$ a finite positive measure on $\X^2$ and $\lambda \in L^2(\X^2,\mu)$. We define the \emph{$\gamma$-gap} of $\lambda$ on $\cE$ as
    \begin{equation}
        \label{generalgap}
        \gamma(\lambda,{\cE},\mu) = \inf \left\{ \int_{\X^2} \lambda(x,y) \, z(x) \, z(y) \, {\rm d}\mu(x,y): z \in {\cE}^\X \text{ and } \| \varphi_z \|_{\mu} < +\infty\right\},
    \end{equation}
where $\varphi_z: (x,y) \mapsto \varphi_z(x,y) = z(x) z(y)$.\\
\end{definition}

\begin{definition}[$\eta$-gap of a real function]
\label{defgap11}
Let $\cE$ be a subset of $\R$, $\mu$ a finite positive measure on $\X^2$ and $\lambda \in L^2(\X^2,\mu)$. We define the \emph{$\eta$-gap} of $\lambda$ on $\cE$ as
    \begin{equation}
        \label{generalgap}
        \eta(\lambda,{\cE},\mu) = \sup \left\{ \frac{1}{2} \int_{\X^2} \lambda(x,y) \, [z(x)-z(y)]^2 \, {\rm d}\mu(x,y): z \in {\cE}^\X \text{ and } \| \psi_z \|_{\mu} < +\infty\right\},
    \end{equation}
where $\psi_z: (x,y) \mapsto \psi_z(x,y) = [z(x)-z(y)]^2$.\\
\end{definition}

\subsection{Properties}

Let $\cE$ be a subset of $\R$ and $\check{\cE}$ be its symmetric with respect to the origin. It is straightforward to establish the following properties:
\begin{itemize}
\item $ 0 \in {\cE} \ \Longrightarrow \ \gamma ( \bLambda, {\cE} ) \leq 0$.
\item $ \gamma ( \bLambda, \check {\cE}) = \gamma (\bLambda, {\cE} )$;
\item ${\cE} \subset {\cE}^\prime \ \Longrightarrow \ \gamma ( \bLambda, {\cE}) \geq \gamma ( \bLambda, {\cE}^\prime )$;
\item $ \gamma ( \bLambda, {\cE} \cup {\cE}^\prime ) \leq \min \left( \gamma ( \bLambda, {\cE} ) , \gamma ( \bLambda, {\cE}^\prime ) \right)$;
\item $ \gamma ( \bLambda, {\cE} \cup \check {\cE} ) \leq \gamma ( \bLambda, {\cE} )$;
\item $ \gamma ( \bLambda, a \, {\cE} ) = a^2 \, \gamma ( \bLambda, {\cE} )$ for $a \in \R$; 
\item $ \gamma (a \,\bLambda, {\cE} ) = a \, \gamma ( \bLambda, {\cE} )$ for $a \in \R_{>0}$;
\item $\gamma(\cdot,\cE)$ is concave:
$$ \gamma \left( \sum_{k=1}^K \omega_k \, \bLambda_k, {\cE} \right) \geq \sum_{k=1}^K \omega_k \, \gamma (\bLambda_k, {\cE} ),$$
for all non-negative real numbers $\omega_1,\ldots,\omega_K$ summing to $1$. This inequality remains valid when $K=+\infty$ and can be extended 
to the continuous case, by replacing the weights $\omega_1,\ldots,\omega_K$ by a probability distribution and the discrete sums by integrals.\\
\end{itemize}

The above properties also hold for the $\gamma$-gap of a function, being a continuous version of the $\gamma$-gap of a matrix, by substituting $\gamma(\lambda,\cE,\mu)$ for $\gamma(\bLambda,\cE)$.\\

As for the $\eta$-gap of a matrix (and, by extension, the $\eta$-gap of a function), one has:
\begin{itemize}
    \item $\eta (\bLambda, {\cE} ) \geq 0$;
    \item $ \eta (\bLambda, {\cE} ) = \eta (\bLambda^\prime, {\cE} )$ if $\bLambda-\bLambda^\prime$ is a diagonal matrix;
    \item $ \eta ( \bLambda, \check {\cE}) = \eta (\bLambda, {\cE} )$;
    \item ${\cE} \subset {\cE}^\prime \ \Longrightarrow \ \eta ( \bLambda, {\cE}) \leq \eta ( \bLambda, {\cE}^\prime )$;
    \item $ \eta ( \bLambda, {\cE} \cup {\cE}^\prime ) \geq \max \left( \eta ( \bLambda, {\cE} ) , \eta ( \bLambda, {\cE}^\prime ) \right)$;
    \item $ \eta ( \bLambda, {\cE} \cup \check {\cE} ) \geq \eta ( \bLambda, {\cE} )$; 
    \item $ \eta ( \bLambda, a \, {\cE} ) = a^2 \, \eta ( \bLambda, {\cE} )$ for $a \in \R$; 
    \item $ \eta (a \,\bLambda, {\cE} ) = a \, \eta ( \bLambda, {\cE} )$ for $a \in \R_{>0}$; 
    \item $\eta(\cdot,\cE)$ is convex: for all non-negative real numbers $\omega_1,\ldots,\omega_K$ summing to $1$, $$\eta \left(\sum_{k=1}^K \omega_k \, \bLambda_k, {\cE} \right) \leq \sum_{k=1}^K \omega_k \, \eta (\bLambda_k, {\cE} ).$$ Again, the inequality remains valid if $K=+\infty$ and can be extended to the continuous case.
\\

\end{itemize}

The $\gamma$-gap and $\eta$-gap of a matrix or of a function can be fully or partially determined for specific families of matrices and/or subsets $\cE$. In particular, the following lemmas hold.\\

\begin{lemma}
\label{lem:negeig0}
    Let $\bLambda$ be a real symmetric positive semidefinite matrix. Then $\gamma(\bLambda,\cE) \geq 0$, and $\gamma(\bLambda,\cE) = 0$ as soon 
    as $0 \in \cE$.
\end{lemma}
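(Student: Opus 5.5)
The argument reads directly off Definition~\ref{defgap1}: $\gamma(\bLambda,\cE)$ is the infimum of the quadratic form $\bz \mapsto \bz \bLambda \bz^\top$ over $\bz \in \cE^n$. We therefore bound this form from below on the whole of $\R^n$, which contains $\cE^n$. Then, when $0 \in \cE$, we show the bound is attained.

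For the inequality, positive semidefiniteness of $\bLambda$ gives $\bz \bLambda \bz^\top \geq 0$ for every $\bz \in \R^n$. This is exactly the quadratic-form inequality (\ref{PSD}), with the $\lambda_k$ replaced by the components of $\bz$ and $\rho(x_k,x_\ell)$ replaced by $\lambda_{k\ell}$. Since $\cE \subset \R$, we have $\cE^n \subset \R^n$, so every element of the set whose infimum defines $\gamma(\bLambda,\cE)$ is non-negative. Hence $\gamma(\bLambda,\cE) \geq 0$.

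One degenerate case must be handled: if $\cE$ is empty, the set is empty and, by the usual convention, the infimum is $+\infty$, which is still $\geq 0$. In every case the inequality therefore holds, understood in the extended reals. Symmetry of $\bLambda$ is not actually needed for this step, since only the values of the quadratic form enter.

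For the equality, assume $0 \in \cE$. Then the zero vector $\mathbf{0} = (0,\ldots,0)$ belongs to $\cE^n$, and $\mathbf{0}\,\bLambda\,\mathbf{0}^\top = 0$, so $\gamma(\bLambda,\cE) \leq 0$. This is the first listed property of the gap. Combined with $\gamma(\bLambda,\cE) \geq 0$, it gives $\gamma(\bLambda,\cE) = 0$.

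There is no real obstacle here. The only points requiring care are recognising that the infimum is taken over a subset of $\R^n$, and the convention for the empty codomain.
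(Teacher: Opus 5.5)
Your proof is correct and follows the paper's argument: positive semidefiniteness makes $\bz\bLambda\bz^\top\geq 0$ on $\R^n\supset\cE^n$, and the zero vector in $\cE^n$ gives the matching upper bound when $0\in\cE$. Your remark on the empty codomain (infimum $+\infty$) is a harmless refinement that the paper leaves implicit.
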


\bigskip

\begin{lemma}
\label{lem:negeig}
    Let $\bLambda$ be a real symmetric matrix with at least one negative eigenvalue. Then $\gamma(\bLambda,\cE) = -\infty$ for each of the sets $\R$, $\Q$, $\Z$, and $\Z \smallsetminus \{0\}$. 
\end{lemma}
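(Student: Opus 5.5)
The plan is to exhibit, for each codomain, a family of admissible vectors $\bz$ along which the quadratic form $\bz\bLambda\bz^\top$ tends to $-\infty$. Since $\gamma$ is an infimum, this is enough. The starting point is a negative eigenvalue $-\alpha<0$ of $\bLambda$ with a real unit eigenvector $\bx\in\R^n$, which exists because $\bLambda$ is real symmetric.

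\textbf{Case $\cE=\R$.} Take $\bz=t\bx$. Then $\bz\bLambda\bz^\top=-\alpha t^2\to-\infty$ as $t\to\infty$.

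\textbf{Cases $\cE=\Q$ and $\cE=\Z$.} Since $\Z\subset\Q$ and $\gamma$ is monotone under inclusion ($\cE\subset\cE'\Rightarrow\gamma(\bLambda,\cE)\ge\gamma(\bLambda,\cE')$), it suffices to treat $\Z$. The idea is to approximate $t\bx$ by an integer vector with bounded error. Let $\bz_t$ be the componentwise rounding of $t\bx$, so $\bz_t=t\bx+\bdelta_t$ with every component of $\bdelta_t$ in $[-1/2,1/2]$. Then
\[
\bz_t\bLambda\bz_t^\top=-\alpha t^2+2t\,\bx\bLambda\bdelta_t^\top+\bdelta_t\bLambda\bdelta_t^\top\le -\alpha t^2+C t+C',
\]
where $C$ and $C'$ depend only on $\|\bLambda\|$ and $n$ (for instance $C=\sqrt n\,\|\bLambda\|$). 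The right-hand side tends to $-\infty$.

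\textbf{Case $\cE=\Z\smallsetminus\{0\}$.} This is the case I expect to be the main obstacle, because the rounded vector $\bz_t$ may have zero components. These can only occur at indices where $|t x_k|<1/2$. The fix is to replace each zero component by $1$. This changes $\bz_t$ by a vector whose entries lie in $\{0,1\}$, so it is again a bounded perturbation of $t\bx$, now with every component of the error bounded by $3/2$ in absolute value. The same estimate, with modified constants, still gives $\bz\bLambda\bz^\top\le-\alpha t^2+O(t)\to-\infty$.

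The key point throughout is that the negative eigenvalue contributes a term quadratic in $t$, while integrality and nonzero constraints only add perturbations that stay bounded, so their effect grows at most linearly in $t$. Hence $\gamma(\bLambda,\cE)=-\infty$ in all four cases.
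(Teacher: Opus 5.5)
Your proof is correct, but it uses a different mechanism from the paper's. The paper does not work along an eigenvector. It takes any $\bz_0$ with $\bz_0\bLambda\bz_0^\top<0$ and uses continuity to get an open neighborhood on which the form stays negative. It picks there a vector $\bz_1$ with nonzero rational coordinates, then scales exactly by integers $a$ (multiples of a common denominator). This gives $a\bz_1\in(\Z\smallsetminus\{0\})^n$ and $(a\bz_1)\bLambda(a\bz_1)^\top=a^2\,\bz_1\bLambda\bz_1^\top\to-\infty$. The cases $\R$, $\Q$, $\Z$ then follow at once from the inclusion monotonicity of $\gamma$. The paper's route has no error term, and the nonzero constraint is handled for free by the choice of $\bz_1$; this is why it only needs to treat the strongest case. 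Your argument instead keeps the exact, possibly irrational, direction $\bx$ and approximates $t\bx$ by lattice points with a componentwise bounded perturbation. The quadratic term $-\alpha t^2$ then beats the $O(t)$ cross terms. This costs you an explicit estimate and a separate patch for zero components. In exchange, it needs neither density of rational directions nor any closure of $\cE$ under integer scaling, and it shows more generally that $\gamma(\bLambda,\cE)=-\infty$ whenever every real number lies within a fixed distance of $\cE$. Your last case already implies the first three via $\Z\smallsetminus\{0\}\subset\Z\subset\Q\subset\R$, so treating $\R$, $\Q$ and $\Z$ separately is redundant.
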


\bigskip

\begin{corollary}
\label{lem:gammaR}
    Let $\bLambda$ be a real symmetric matrix. Then, $\gamma(\bgL,\R) =\gamma(\bLambda,\Q)= \gamma(\bgL,\Z) = 0$ or $-\infty$.
\end{corollary}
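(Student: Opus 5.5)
The plan is a two-case split on the spectrum of $\bLambda$, using the two preceding lemmas. Since $\bLambda$ is real symmetric, it is orthogonally diagonalizable with real eigenvalues. So either all eigenvalues are non-negative, which means $\bLambda$ is positive semidefinite, or at least one eigenvalue is negative. These two cases are exhaustive and mutually exclusive, and each is covered by one of Lemmas \ref{lem:negeig0} and \ref{lem:negeig}.

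\emph{Case 1: $\bLambda$ is positive semidefinite.} The sets $\R$, $\Q$ and $\Z$ all contain $0$. Lemma \ref{lem:negeig0} therefore gives $\gamma(\bLambda,\cE)=0$ for each $\cE\in\{\R,\Q,\Z\}$. This can also be checked directly: $\bz\bLambda\bz^\top\ge 0$ for every $\bz\in\R^n$, and the value $0$ is attained at $\bz=\boldsymbol{0}\in\cE^n$.

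\emph{Case 2: $\bLambda$ has a negative eigenvalue.} Lemma \ref{lem:negeig} gives directly $\gamma(\bLambda,\R)=\gamma(\bLambda,\Q)=\gamma(\bLambda,\Z)=-\infty$. If one wanted this step to be self-contained, the idea would be as follows. Take a unit eigenvector $\boldsymbol{v}$ with eigenvalue $-c<0$. Then $t\boldsymbol{v}$ gives the value $-ct^2\to-\infty$ in $\R^n$. Next, approximate $\boldsymbol{v}$ by rational vectors $\boldsymbol{q}$ with $\boldsymbol{q}\bLambda\boldsymbol{q}^\top<0$, which is possible by continuity of the quadratic form. Finally, clear denominators to get an integer vector $\bz$ with $\bz\bLambda\bz^\top<0$, and scale it by $m\in\N$ to get $m^2\,\bz\bLambda\bz^\top\to-\infty$.

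In both cases the three gaps coincide and equal $0$ or $-\infty$, which is the claim. The only substantive step is the integer case inside Lemma \ref{lem:negeig}: one has to go from a real direction of negativity to an integer one. That is settled by the density-plus-scaling argument above, so with the earlier lemmas assumed there is no real obstacle.
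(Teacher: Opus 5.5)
Your proof is correct and matches the paper's intended argument: the corollary has no separate proof and follows from Lemma~\ref{lem:negeig0} (positive semidefinite case, using $0\in\R,\Q,\Z$) and Lemma~\ref{lem:negeig} (negative-eigenvalue case), which is exactly your two-case split. Your sketch of the integer case also mirrors the paper's proof of Lemma~\ref{lem:negeig}; the paper additionally requires non-zero rational coordinates only because that lemma also covers $\Z\smallsetminus\{0\}$.
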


\bigskip

\begin{lemma}
\label{lem:negeig1}
    Let $\bLambda$ be a real symmetric matrix. Then, $\gamma(\bLambda,\R_{\geq 0})=\gamma(\bLambda,\mathbb{N})=0$ or $-\infty$.
\end{lemma}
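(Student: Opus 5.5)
The plan is to exploit the fact that both $\R_{\geq 0}$ and $\mathbb{N}$ are closed under multiplication by suitable scalars and contain $0$. The key step is a homogeneity (cone) argument: the quadratic form $\bz \mapsto \bz\bLambda\bz^\top$ is homogeneous of degree two.

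First, since $0 \in \R_{\geq 0}$ and $0\in\mathbb{N}$, taking $\bz = \mathbf{0}$ gives $\gamma(\bLambda,\R_{\geq 0}) \leq 0$ and $\gamma(\bLambda,\mathbb{N})\leq 0$; this is the first property listed above.

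Next, suppose there is some $\bz\in\R_{\geq 0}^n$ with $\bz\bLambda\bz^\top<0$. Then for every $t>0$ we have $t\bz\in\R_{\geq 0}^n$ and $(t\bz)\bLambda(t\bz)^\top = t^2\,\bz\bLambda\bz^\top\to-\infty$, so $\gamma(\bLambda,\R_{\geq 0})=-\infty$. Otherwise the form is non-negative on $\R_{\geq 0}^n$, so the infimum is $\geq 0$, and combined with the first step it equals $0$. Hence $\gamma(\bLambda,\R_{\geq 0})\in\{0,-\infty\}$.

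For $\mathbb{N}$, the only extra point is to move from a real witness to an integer one; this is the main (mild) obstacle. If $\bz\in\R_{\geq 0}^n$ satisfies $\bz\bLambda\bz^\top<0$, then by continuity of the quadratic form and density of $\Q_{\geq 0}^n$ in $\R_{\geq 0}^n$ we can pick $\bz'\in\Q_{\geq 0}^n$ with $\bz'\bLambda\bz'^\top<0$. Clearing denominators by multiplying by a common positive integer $m$ gives $m\bz'\in\mathbb{N}^n$, still with a negative value. Scaling further by $k\in\mathbb{N}$ sends the value to $-\infty$ as $k\to\infty$, so $\gamma(\bLambda,\mathbb{N})=-\infty$. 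Conversely, if the form is non-negative on $\R_{\geq 0}^n$, it is non-negative on $\mathbb{N}^n\subset\R_{\geq 0}^n$, so $\gamma(\bLambda,\mathbb{N})=0$.

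Hence both gaps are simultaneously $0$ (when $\bLambda$ is copositive) or simultaneously $-\infty$, which proves the statement. Symmetry of $\bLambda$ is not actually needed.
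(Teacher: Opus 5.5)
Your proof is correct and follows essentially the same route as the paper. The zero vector gives $\gamma \leq 0$, and the inclusion $\N \subset \R_{\geq 0}$ settles the case where the form is non-negative on $\R_{\geq 0}^n$. Otherwise a witness in $\R_{\geq 0}^n$ is perturbed to a nonnegative rational point, scaled to an integer vector, and scaled further to send the form to $-\infty$, exactly as in the paper's adaptation of the proof of Lemma~\ref{lem:negeig}. Your version is slightly cleaner: you work directly with a witness in the nonnegative orthant rather than mentioning a negative eigenvalue (whose eigenvector need not be nonnegative), and you rightly note that symmetry of $\bLambda$ plays no role.
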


\bigskip

\begin{lemma}
\label{gamma2eta}
    Let $\bLambda = [\lambda_{k\ell}]_{k,\ell=1}^n$ be a real matrix. Then, $\eta(\bLambda,\cE) = -\gamma(\bLambda-\bDelta,\cE)$,
    where $\bDelta$ is the diagonal matrix whose $k$-th diagonal entry is $\delta_{kk}=\sum_{\ell=1}^n \lambda_{k\ell}$.
\end{lemma}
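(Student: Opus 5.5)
The proof is a direct algebraic identity followed by passing to the supremum. Fix $\bz=(z_1,\ldots,z_n)\in\cE^n$ and expand the square in the definition \eqref{etagap}:
\begin{equation*}
\frac{1}{2}\sum_{k=1}^n\sum_{\ell=1}^n \lambda_{k\ell}\,[z_k-z_\ell]^2
=\frac{1}{2}\sum_{k,\ell}\lambda_{k\ell}\,z_k^2+\frac{1}{2}\sum_{k,\ell}\lambda_{k\ell}\,z_\ell^2-\sum_{k,\ell}\lambda_{k\ell}\,z_k z_\ell .
\end{equation*}
The first sum equals $\frac{1}{2}\sum_k \delta_{kk} z_k^2$ by the definition of $\bDelta$, since $\delta_{kk}=\sum_\ell \lambda_{k\ell}$ is a row sum. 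The last sum is $\bz\bLambda\bz^\top$.

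The second sum is $\frac12\sum_\ell \big(\sum_k\lambda_{k\ell}\big) z_\ell^2$, which involves column sums rather than row sums. If $\bLambda$ is symmetric, as in the intended applications, it also equals $\frac12 \bz\bDelta\bz^\top$. I will first treat the symmetric case and then check the general one.

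In the symmetric case the two diagonal contributions add to $\bz\bDelta\bz^\top$. The quantity inside the supremum is therefore
\begin{equation*}
\bz\bDelta\bz^\top-\bz\bLambda\bz^\top=-\,\bz(\bLambda-\bDelta)\bz^\top .
\end{equation*}
Since this holds for every $\bz\in\cE^n$, taking the supremum over $\cE^n$ gives $\sup\{-\bz(\bLambda-\bDelta)\bz^\top\}=-\inf\{\bz(\bLambda-\bDelta)\bz^\top\}$. By Definition~\ref{defgap1}, the right-hand side is $-\gamma(\bLambda-\bDelta,\cE)$. The conventions $\pm\infty$ match automatically, because $\sup(-f)=-\inf f$ holds in the extended reals.

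The only real obstacle is the non-symmetric case, where column sums may differ from row sums. There are two tools. First, for any square matrix $\bA$, $\bz\bA\bz^\top=\bz\,\tfrac12(\bA+\bA^\top)\bz^\top$. Second, the expression $\frac12\sum_{k,\ell}\lambda_{k\ell}[z_k-z_\ell]^2$ is unchanged if $\bLambda$ is replaced by its symmetric part $\bLambda^s=\frac12(\bLambda+\bLambda^\top)$.

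With $\bLambda^s$, the row sums are $\frac12(r_k+c_k)$, where $r_k$ and $c_k$ are the row and column sums of $\bLambda$. The symmetric-case computation then shows that the expression equals $-\bz(\bLambda^s-\bDelta^s)\bz^\top$ with $\delta^s_{kk}=\frac12(r_k+c_k)$.

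To recover exactly $\bDelta$ built from row sums, this requires $r_k=c_k$. So in the general case I would either state the lemma for symmetric $\bLambda$, which is implicit in its use, or define $\delta_{kk}$ through the symmetric part. I will verify which convention the paper adopts and write the proof accordingly, noting that for symmetric $\bLambda$ the identity is exact.
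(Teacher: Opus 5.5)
Your argument is the paper's: its proof consists only of the identity $\frac{1}{2}\sum_{k,\ell}\lambda_{k\ell}[z_k-z_\ell]^2=\bz(\bDelta-\bLambda)\bz^\top$, with the passage to $\sup$ and $\inf$ left implicit, and your symmetric-case computation is exactly that.

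Your caveat is also correct, and you should commit to it rather than defer to the paper's convention. The paper asserts the identity for an arbitrary real matrix, but the two sides differ by $\frac{1}{2}\sum_k (c_k-r_k)z_k^2$. The lemma itself fails without symmetry: take $n=2$, $\lambda_{12}=1$, all other entries $0$, and $\cE=\{0,1\}$. Then $\eta(\bLambda,\cE)=\tfrac12$, but $-\gamma(\bLambda-\bDelta,\cE)=1$, the infimum of $\bz(\bLambda-\bDelta)\bz^\top$ being attained at $\bz=(1,0)$.

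So the correct statement is for $\bLambda$ with equal row and column sums. This includes symmetric $\bLambda$, which is how the lemma is used for semivariograms. For general $\bLambda$, take $\delta_{kk}=\tfrac12(r_k+c_k)$, as you derived. The discrepancy also vanishes whenever $z^2$ is constant on $\cE$, e.g. $\cE=\{-1,1\}$, since $\sum_k c_k=\sum_k r_k$.
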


\bigskip

\begin{corollary}
\label{lem:etaR}
    Let $\bLambda$ be a real symmetric matrix. Then, $\eta(\bgL,\R) =\eta(\bLambda,\Q)=\eta(\bLambda,\R_{\geq 0}) = \eta(\bgL,\Z) = \eta(\bgL,\N) = 0$ or $+\infty$.
\end{corollary}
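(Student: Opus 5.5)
The plan is to reduce everything to the $\gamma$-gap via Lemma \ref{gamma2eta} and then read off the dichotomy from Corollary \ref{lem:gammaR} and Lemma \ref{lem:negeig1}. Put $\bLambda' = \bLambda - \bDelta$, where $\bDelta$ is the diagonal matrix of row sums of $\bLambda$. Since $\bLambda$ is symmetric and $\bDelta$ is diagonal, $\bLambda'$ is real symmetric. For each of the codomains $\cE \in \{\R, \Q, \R_{\geq 0}, \Z, \N\}$, Lemma \ref{gamma2eta} gives $\eta(\bLambda,\cE) = -\gamma(\bLambda',\cE)$.

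Corollary \ref{lem:gammaR} applied to $\bLambda'$ gives $\gamma(\bLambda',\R)=\gamma(\bLambda',\Q)=\gamma(\bLambda',\Z)\in\{0,-\infty\}$. Lemma \ref{lem:negeig1} gives $\gamma(\bLambda',\R_{\geq 0})=\gamma(\bLambda',\N)\in\{0,-\infty\}$. Hence each individual $\eta$-gap lies in $\{0,+\infty\}$.

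It remains to show that the two groups take the same value, which is where some care is needed. The key observation is that $\eta$ is invariant under translation of $\cE$, because $[z_k-z_\ell]^2$ is unchanged when a constant is added to all $z_k$.

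First consider the continuous codomains. One inclusion is immediate from monotonicity: $\R_{\geq 0}\subset\R$ gives $\eta(\bLambda,\R_{\geq0})\le\eta(\bLambda,\R)$. For the reverse, take any $\bz\in\R^n$ and translate it by $-\min_k z_k$ to land in $\R_{\geq0}^n$ without changing the objective. So the suprema coincide.

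The same translation trick works on the integers: translating $\bz\in\Z^n$ by $-\min_k z_k$ lands in $\N^n$, so $\eta(\bLambda,\Z)=\eta(\bLambda,\N)$.

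It also helps to use the alternative identity $\eta = -\gamma(\bLambda')$, where $\bLambda'\mathbf{1}^\top = 0$. This makes the translation invariance visible at the level of $\gamma$. Combined with Corollary \ref{lem:gammaR}, which already equates the $\R$, $\Q$ and $\Z$ gaps of $\bLambda'$, all five values are equal.

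The one subtle point is that Corollary \ref{lem:gammaR} already equates $\R$ with $\Z$. This works because $\gamma$ equals $-\infty$ exactly when $\bLambda'$ has a negative eigenvalue, by Lemmas \ref{lem:negeig0} and \ref{lem:negeig}. So the only real work is the translation argument linking $\R_{\geq0}$ and $\N$ to $\R$ and $\Z$. That argument is routine, and I do not expect a genuine obstacle.

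Finally, $0\in\cE$ in every case, so $\eta\ge 0$ is consistent with the value $0$ being attained by taking all $z_k = 0$.
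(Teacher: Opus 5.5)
Your proof is correct. It follows the route the paper implicitly intends, since no proof is written out there: pass to $-\gamma(\bLambda-\bDelta,\cdot)$ via Lemma~\ref{gamma2eta}, then invoke Corollary~\ref{lem:gammaR} and Lemma~\ref{lem:negeig1}. Your translation-invariance step, equivalently $(\bLambda-\bDelta)\boldsymbol{1}^\top=\boldsymbol{0}$, is exactly what identifies the $\R_{\geq 0}$ and $\N$ values with the $\R$, $\Q$, $\Z$ values; those $\gamma$-results alone cannot, because e.g.\ $\left(\begin{smallmatrix}0&1\\1&0\end{smallmatrix}\right)$ has $\gamma=-\infty$ on $\R$ but $\gamma=0$ on $\R_{\geq 0}$, and with that step in place Lemma~\ref{lem:negeig1} is actually redundant.
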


\bigskip

\begin{corollary}
\label{cor:diagdom}
    If all the off-diagonal entries of $\bLambda$ are non-positive (i.e., $\bLambda$ is a Z-matrix), then $\eta(\bLambda,\cE)=0$.
\end{corollary}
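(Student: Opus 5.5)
Since every term in the supremum defining $\eta(\bLambda,\cE)$ is $\frac{1}{2}\sum_{k,\ell}\lambda_{k\ell}[z_k-z_\ell]^2$, the plan is to show directly that each such term is non-positive, and then exhibit one choice of $\bz$ for which it equals zero. The diagonal entries do not matter here: when $k=\ell$ the factor $[z_k-z_\ell]^2$ vanishes, so only the off-diagonal entries contribute (this is also the listed property that $\eta$ is unchanged when $\bLambda$ is modified by a diagonal matrix).

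\emph{Upper bound.} By assumption $\lambda_{k\ell}\le 0$ for every $k\neq\ell$. Since $[z_k-z_\ell]^2\ge 0$, each summand $\lambda_{k\ell}[z_k-z_\ell]^2$ is non-positive. Hence for every $\bz\in\cE^n$,
\begin{equation*}
\frac{1}{2}\sum_{k=1}^n\sum_{\ell=1}^n \lambda_{k\ell}[z_k-z_\ell]^2=\frac{1}{2}\sum_{k\neq \ell}\lambda_{k\ell}[z_k-z_\ell]^2\le 0 .
\end{equation*}
Taking the supremum gives $\eta(\bLambda,\cE)\le 0$.

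\emph{Lower bound.} The listed property $\eta(\bLambda,\cE)\ge 0$ gives the reverse inequality. One can also see it directly: choose any $e\in\cE$ and set $\bz=(e,\ldots,e)$, which makes every difference, and hence the whole sum, equal to zero. Combining the two bounds yields $\eta(\bLambda,\cE)=0$.

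As a cross-check, one could instead go through Lemma~\ref{gamma2eta} and show $\gamma(\bLambda-\bDelta,\cE)=0$. The matrix $\bDelta-\bLambda$ has non-negative off-diagonal entries, and its rows sum to zero, and one would show that the associated quadratic form is non-negative. The direct argument above avoids this detour.

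The only real caveat is that $\cE$ must be non-empty, which is implicit throughout the paper. If $\cE$ were empty the supremum would be $-\infty$, so the statement implicitly assumes $\cE\neq\emptyset$. Apart from this, the proof is routine and I do not expect any step to be an obstacle.
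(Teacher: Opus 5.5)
Your proof is correct, but it obtains the key inequality $\eta(\bLambda,\cE)\le 0$ by a different route from the paper. The paper goes through Lemma~\ref{gamma2eta}. The off-diagonal entries of $\bLambda-\bDelta$ are $\lambda_{k\ell}\le 0$ and its diagonal entries are $-\sum_{\ell\neq k}\lambda_{k\ell}\ge 0$, so $\bLambda-\bDelta$ is diagonally dominant with non-negative diagonal, hence positive semidefinite. This gives $\gamma(\bLambda-\bDelta,\cE)\ge 0$, i.e.\ $\eta(\bLambda,\cE)\le 0$. A constant vector $\bz=(z,\ldots,z)$ then gives equality, exactly as in your lower bound. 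That is the ``cross-check'' you sketch. Note, however, that the form to be shown non-negative is $\bz(\bLambda-\bDelta)\bz^\top$; the form of $\bDelta-\bLambda$ itself is non-positive. Your termwise sign argument on the definition of $\eta$ avoids both Lemma~\ref{gamma2eta} and the fact that diagonal dominance implies positive semidefiniteness. It is also more general: it holds verbatim for non-symmetric $\bLambda$, whereas the paper's route tacitly needs symmetry. Diagonal dominance yields positive semidefiniteness only for symmetric matrices, and the identity behind Lemma~\ref{gamma2eta} requires the row and column sums of $\bLambda$ to agree. For instance, with $n=2$, $\lambda_{12}=-2$ and all other entries $0$, the form $\bz(\bLambda-\bDelta)\bz^\top=2z_1^2-2z_1z_2$ takes negative values, while $\eta(\bLambda,\R)=0$. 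In exchange, the paper's route characterizes $\eta(\bLambda,\cE)=0$ through positive semidefiniteness of the Laplacian-type matrix $\bLambda-\bDelta$, and this is the viewpoint reused in the proof of Theorem~\ref{variogr_unboundedset}.
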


\bigskip

In general, determining the $\gamma$-gap of a given real square matrix $\bLambda$ is an NP-hard problem. This is the case for determining the $\gamma$-gap of a 
matrix on the closed half-line $\R_{\geq 0}$: as it will be shown in the proof of Theorem \ref{matrixgapineq} hereinafter, deciding whether $\gamma(\bLambda,
\R_{\geq 0})=0$ or $\gamma(\bLambda,\R_{\geq 0})=-\infty$ amounts to deciding whether or not $\bLambda$ belongs to the cone of completely positive matrices, 
which is an NP-hard problem \citep{Dickinson2014}. Another example is the computation of the $\gamma$-gap of a symmetric positive semidefinite matrix of rank one, 
which is equivalent to computing the $\zeta$-gap of a real vector, which has been proved to be NP-hard for vectors with entries in $\cE=\{-1,1\}$ \citep{Laurent1996}. Still with $\cE=\{-1,1\}$, determining the $\eta$-gap of a symmetric matrix with non-negative entries is an NP-hard max-cut problem \citep{Goemans1995}.


\section{Gap inequalities in a discrete setting}
\label{discretesett}

In this section, we provide necessary and sufficient conditions for a given mapping to be the non-centered covariance, semivariogram, or higher-order spatial moment, of a random field with codomain $\cE \subseteq \R$. These conditions involve the $\gamma$- and $\eta$-gaps introduced in Definitions \ref{defgap1} to \ref{defgap2}.

\subsection{Characterization of non-centered covariance functions}
\label{covariances}

\begin{theorem}
    \label{matrixgapineq}
    Let ${\cE}$ be a closed subset of the real line.  
    Then, a mapping $\rho: \X \times \X \to \R$ is the non-centered covariance of a random field defined on $\X$ and valued in ${\cE}$ if, and only if, it 
    fulfills the following two conditions:
    \begin{enumerate}
        \item Symmetry: $\rho(x,y)=\rho(y,x)$ for any $x,y \in \X$.
        \item Gap inequalities: for any positive integer $n$, any real square matrix $\bLambda=[\lambda_{k\ell}]_{k,\ell=1}^n$ and any set of points $x_1,
        \ldots,x_n$ in $\X$, one has 
        \begin{equation}
        \label{matrixgap}
            \langle \bLambda, \bR \rangle = \sum_{k=1}^n \sum_{\ell=1}^n \lambda_{k\ell} \, \rho(x_k,x_\ell) \geq \gamma(\bLambda,{\cE}),
        \end{equation}
        where $\bR=[\rho(x_{k},x_{\ell})]_{k,\ell=1}^n$ and $\gamma(\bLambda,{\cE})$ is the $\gamma$-gap of $\bLambda$ on ${\cE}$ as per Definition \ref{defgap1}.
    \end{enumerate}
    Furthermore, the claim of the Theorem holds true if one restricts $\bLambda$ to be a symmetric matrix.
\end{theorem}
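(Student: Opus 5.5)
Necessity is immediate, so the substance of the proof is sufficiency: a finite-dimensional separation argument for each finite set of points, followed by a Kolmogorov-type extension.

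\emph{Necessity.} Suppose $Z$ is a random field valued in $\cE$ with non-centered covariance $\rho$. Symmetry is obvious. For points $x_1,\ldots,x_n$ and any matrix $\bLambda$, write $\bz(\omega)=(Z(x_1,\omega),\ldots,Z(x_n,\omega))\in\cE^n$. Then
\begin{equation*}
\langle \bLambda,\bR\rangle=\E\bigl(\bz\bLambda\bz^\top\bigr)\geq \gamma(\bLambda,\cE),
\end{equation*}
because the integrand is pointwise at least $\gamma(\bLambda,\cE)$ and $\P$ is a probability measure. If $\gamma(\bLambda,\cE)=-\infty$ there is nothing to prove. Since $\langle\bLambda,\bR\rangle=\langle(\bLambda+\bLambda^\top)/2,\bR\rangle$ for symmetric $\bR$, and likewise $\bz\bLambda\bz^\top=\bz\frac{\bLambda+\bLambda^\top}{2}\bz^\top$, the two gaps coincide. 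Hence the conditions with general $\bLambda$ and with symmetric $\bLambda$ are equivalent, which also settles the last claim of the Theorem.

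\emph{Sufficiency, finite step.} Fix distinct points $x_1,\ldots,x_n$. Consider the set
\begin{equation*}
\cC_n=\{\bz^\top\bz:\bz\in\cE^n\}
\end{equation*}
in the space of symmetric matrices, and let $\cM_n$ be the set of second-moment matrices $\E(\bz^\top\bz)$ of $\cE^n$-valued random vectors with finite second moments.

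\begin{enumerate}
\item $\cM_n$ is convex and contains the convex hull of $\cC_n$.
\item I will show that $\bR$ lies in $\cM_n$. The natural tool is the Hahn--Banach/separating hyperplane theorem. If $\bR$ lies outside the closure of the convex hull of $\cC_n$, there is a symmetric $\bLambda$ with $\langle\bLambda,\bR\rangle<\inf_{\cC_n}\langle\bLambda,\cdot\rangle=\gamma(\bLambda,\cE)$, contradicting the gap inequality. So $\bR$ lies in the closed convex hull of $\cC_n$.
\item Upgrade "closed convex hull" to "realizable moment matrix" using closedness of $\cE$. This is where I expect the main difficulty, since the convex hull of $\cC_n$ need not be closed when $\cE$ is unbounded. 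The first attempt is a limit argument. Take finitely supported probability measures $\P_m$ on $\cE^n$ whose second-moment matrices converge to $\bR$. Their diagonals $\E_m(z_k^2)$ are bounded, so the sequence is tight. Prokhorov's theorem gives a weak limit $\P$, supported on $\cE^n$ because $\cE^n$ is closed. Fatou's lemma alone only yields second moments $\leq$ the limit on the diagonal, so mass may escape to infinity.
\item To handle escaping mass, use the gap inequality with $\bLambda$ ranging over matrices supported on the diagonal or on a face. The deficit matrix $\bR-\E_\P(\bz^\top\bz)$ is a limit of moment matrices of mass going to infinity. Normalizing it gives an element of the asymptotic cone of $\cC_n$, namely matrices $\bz^\top\bz$ with $\bz$ in the recession directions of $\cE$. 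Those are directions in $\R_{\geq0}$, $\R_{\leq0}$ or $\R$. A recession direction can be realized by adding a small-probability, large-amplitude mixture component $t\,\bz$ with probability $1/t^2$, where $t\bz\in\cE^n$ (or near it, using closedness and unboundedness of $\cE$ in that direction). This approximately reproduces the deficit, and a final convexity/compactness argument closes the gap.
\item If $\cE$ is bounded, everything is simpler: $\cC_n$ is compact, so its convex hull is already closed and Carathéodory's theorem gives a discrete law directly.
\end{enumerate}

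\emph{Extension.} For each finite subset of $\X$, the set of laws on $\cE^n$ with moment matrix $\bR$ is nonempty, convex, and weakly compact, since the second moments are bounded and $\cE$ is closed. These laws need not be consistent across subsets. To obtain a consistent family, I will use a compactness argument rather than Daniell--Kolmogorov directly. Work on the space $\prod_{x\in\X}\overline{\cE}$, with $\overline{\cE}$ the one-point or two-point compactification. Each finite set $F$ defines a nonempty closed set of probability measures on the product whose $F$-marginal has the right second moments. The moment constraint is closed by the uniform integrability obtained in step 4, or trivially when $\cE$ is bounded. These sets have the finite intersection property, since a union of finite sets is finite. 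Tychonoff's theorem together with Riesz representation then yields a measure in all of them, which defines the random field. Controlling uniform integrability so that the moment constraints are closed under weak limits is the delicate point, and it is the same obstacle as in step 4.
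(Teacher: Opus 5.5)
Your necessity argument and the reduction to symmetric $\bLambda$ are correct. For compact $\cE$ your plan is a valid proof: Carath\'eodory in step 5, then the finite-intersection argument on the compact space $\cE^\X$. It is essentially the paper's Case 3, phrased through closed sets of measures rather than a positive linear functional. Your extension step is also more careful than the paper's Case 4. There, marginalizing $\P_n$ along one nested chain of points does not produce a projective family over all finite subsets of $\X$, which is exactly the inconsistency you anticipate.

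The genuine gap is step 4, together with the uniform integrability it is supposed to supply for the extension. It cannot be filled, because for unbounded closed $\cE$ the statement is false. Take $\cE=\Z$, $\X=\{a,b\}$ and $\rho(x,y)=f(x)f(y)$ with $f(a)=1$, $f(b)=\sqrt2$.
\begin{itemize}
\item Every matrix $\bR$ built on points of $\X$ is positive semidefinite.
\item For symmetric $\bLambda$, $\gamma(\bLambda,\Z)=0$ if $\bLambda$ is positive semidefinite and $-\infty$ otherwise (Lemmas~\ref{lem:negeig0} and~\ref{lem:negeig}). Hence all gap inequalities hold.
\item But any random field with this non-centered covariance has $\E[(Z(b)-\sqrt2\,Z(a))^2]=2-4+2=0$. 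So $Z(b)=\sqrt2\,Z(a)$ almost surely, which for integer values forces $Z(a)=Z(b)=0$, contradicting $\E Z(a)^2=1$.
\end{itemize}
This also refutes the $\Z$ case of the appendix theorem on $\R$- and $\Z$-valued fields.

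In your terms: take the laws giving mass $q^{-2}$ to $(q,p)$ and the rest to $(0,0)$, with $p/q\to\sqrt2$. Their moment matrices converge to $\bR$ and their weak limit is $\delta_0$. The whole of $\bR$ is therefore deficit, equal to $\bz^\top\bz$ with $\bz=(1,\sqrt2)$. No $t\bz$ with $t\neq0$ lies in $\Z^2$, and points ``near it'' reproduce the deficit only approximately, which returns you to the closure you started from.

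Containing a half-line does not help. For $\cE=[1,\infty)$, take the kernel $\rho(a,a)=2$, $\rho(a,b)=\rho(b,b)=1$.
\begin{itemize}
\item It is the limit of the covariances of the laws giving mass $k^{-2}$ to $(k,1)$ and $1-k^{-2}$ to $(1,1)$. Since the gap inequalities are closed under pointwise limits, it satisfies all of them.
\item Yet $\E Z(b)^2=1$ with $Z(b)\geq 1$ forces $Z(b)=1$ almost surely. Then $\E Z(a)=1$ with $Z(a)\geq1$ forces $Z(a)=1$, contradicting $\E Z(a)^2=2$.
\end{itemize}
Adding far-out mass must remove probability from a rigid bounded part, so no convexity or compactness argument can close the gap.

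The gap inequalities characterize only the closure of the set of realizable second-moment matrices, and that set need not be closed. The paper's Case 4 passes over exactly this point: it asserts that the convex hull ${\cal H}_n$ of $\{\bz^\top\bz:\bz\in\cE^n\}$ is closed ``insofar as $\cE$ is closed'', which fails for both sets above.

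Your route is sound in two situations:
\begin{itemize}
\item for compact $\cE$;
\item at the finite-dimensional level, for closed cones $\R$, $\R_{\geq0}$, $\R_{\leq0}$. There your rescaling idea is exact, since $\sum_j c_j\bz_j^\top\bz_j=\sum_j (c_j/C)(\sqrt{C}\bz_j)^\top(\sqrt{C}\bz_j)$ with $C=\sum_j c_j$.
\end{itemize}
Beyond these cases, the theorem needs extra hypotheses on $\cE$ or a weaker, approximation-type conclusion.
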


\bigskip

\begin{remark}
    The gap inequalities (\ref{matrixgap}) are equivalent to
    \begin{equation*}
            \langle \bLambda, \bR \rangle \leq \sup \left\{ \bz \bLambda \bz^\top: \bz \in \cE^n\right\},
        \end{equation*}
        which can be seen by substituting $\bLambda$ in (\ref{matrixgap}) with $-\bLambda$.
\end{remark}

\bigskip

\begin{remark}
  Theorem \ref{matrixgapineq} generalizes two well-known results (details in Appendix \ref{app:particularcases}):
  \begin{itemize}
      \item For $\cE = \R$, it amounts to stating that a non-centered covariance is a symmetric positive semidefinite function \citep{Schoenberg1938}.
      \item For $\cE = \{-1,1\}$, it amounts to stating that a non-centered covariance is a symmetric corner-positive semidefinite function \citep{McMillan1955}.     
  \end{itemize}

\end{remark}

\bigskip

\begin{remark}
    Theorem \ref{matrixgapineq} may not hold when ${\cE}$ is not closed. To see it, let us consider the case ${\cE} = \R_{> 0}$ (open half-line). In such a case, the gap of any real square matrix $\bLambda$ is negative or zero, insofar as, for any fixed vector $\bz \in \R_{>0}^n$, 
    \begin{equation*}
        \gamma(\bLambda,\cE) \leq \lim_{a \to 0^+} (a\bz) \bLambda (a\bz)^\top = 0.
    \end{equation*}
    
    \bigskip
    
    Accordingly, the symmetry and gap inequality conditions of Theorem \ref{matrixgapineq} are satisfied when $\rho$ is identically zero. However, the zero function cannot be the non-centered covariance of a strictly positive random field on $\X$; it can only be the non-centered covariance of a random field that is almost surely equal to zero at any point of $\X$. The same situation can happen even if ${\cE}$ is bounded, for instance, when it is the open interval $(0,1)$.\\    
\end{remark}

\bigskip

For bounded non-closed sets, one has the following result.\\

\begin{theorem}
    \label{matrixgapineq_boundedset}
    Let ${\cE}$ be a bounded subset of $\R$.  
    A mapping $\rho: \X \times \X \to \R$ is the pointwise limit of a sequence of non-centered covariances of random fields on $\X$ valued in ${\cE}$ if, and only if, it fulfills the following conditions:
    \begin{enumerate}
        \item Symmetry: $\rho(x,y)=\rho(y,x)$ for any $x,y \in \X$.
        \item Gap inequalities: for any positive integer $n$, any real symmetric matrix $\bLambda=[\lambda_{k\ell}]_{k,\ell=1}^n$ and any set of points $x_1,\ldots,x_n$ in $\X$, one has 
        \begin{equation}
        \label{matrixgap_boundedset}
            \langle \bLambda, \bR \rangle \geq \gamma(\bLambda,{\cE}),
        \end{equation}
        where $\bR=[\rho(x_{k},x_{\ell})]_{k,\ell=1}^n$.
    \end{enumerate}
\end{theorem}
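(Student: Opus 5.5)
The plan is to reduce to Theorem \ref{matrixgapineq} applied to the closure $\bar{\cE}$, which is closed and, since $\cE$ is bounded, compact. The first step is to show that $\gamma(\bLambda,\cE)=\gamma(\bLambda,\bar{\cE})$ for every real square matrix $\bLambda$. The map $\bz\mapsto \bz\bLambda\bz^\top$ is continuous on $\R^n$ and $\cE^n$ is dense in $\bar{\cE}^n$, so the two infima coincide; both are finite because $\bar{\cE}^n$ is compact. Hence the conditions of Theorem \ref{matrixgapineq_boundedset} are exactly those of Theorem \ref{matrixgapineq} for $\bar{\cE}$, restricted to symmetric $\bLambda$. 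By the final clause of Theorem \ref{matrixgapineq}, they characterize the non-centered covariances of $\bar{\cE}$-valued random fields.

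\emph{Necessity.} Let $\rho_m\to\rho$ pointwise, where each $\rho_m$ is the non-centered covariance of an $\cE$-valued field. By the easy direction of Theorem \ref{matrixgapineq} applied to $\cE$ itself, each $\rho_m$ is symmetric and satisfies $\langle\bLambda,\bR_m\rangle\ge\gamma(\bLambda,\cE)$. The direct argument is just $\E[\bZ\bLambda\bZ^\top]\ge\inf$, and it does not need closedness. The inequality has finitely many entries and is preserved under pointwise limits, and so is symmetry.

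\emph{Sufficiency.} Assume the conditions hold. By the reduction above, there is a random field $Z$ valued in $\bar{\cE}$ with non-centered covariance $\rho$. Fix a measurable map $\pi_m:\bar{\cE}\to\cE$ with $|\pi_m(t)-t|\le 1/m$. To build it, take finitely many points $e_1,\dots,e_N\in\cE$ forming a $1/m$-net of the compact set $\bar{\cE}$, and let $\pi_m(t)$ be the first $e_i$ within distance $1/m$ of $t$; this map is Borel. Set $Z_m(x)=\pi_m(Z(x))$. This is an $\cE$-valued random field on the same probability space, with non-centered covariance $\rho_m(x,y)=\E[Z_m(x)Z_m(y)]$. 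Let $M=\sup_{t\in\bar{\cE}}|t|$. Then $|Z_mZ_m'-ZZ'|\le 2M/m+1/m^2$ pointwise, so $\rho_m\to\rho$ pointwise, even uniformly on $\X\times\X$.

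The main point requiring care is the existence of the $\bar{\cE}$-valued field, which Theorem \ref{matrixgapineq} supplies directly since $\bar{\cE}$ is closed, together with the measurability of $\pi_m$. The remaining steps are routine, and boundedness is used both for finiteness of the gaps and for the uniform approximation.
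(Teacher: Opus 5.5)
Your proof is correct and follows the paper's route. You pass to the compact closure $\bar{\cE}$, where the $\gamma$-gaps coincide by continuity and density, invoke Theorem~\ref{matrixgapineq} to obtain an $\bar{\cE}$-valued field, and push that field into $\cE$ by a map moving each point by at most $1/m$. You also supply the necessity direction and a Borel-measurable approximating map, both of which the paper's proof leaves implicit: its $\varphi_\epsilon$ picks an ``arbitrary'' point of $N_\epsilon(z)$, which need not give a measurable random field.
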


\subsection{Characterization of semivariograms}
\label{variograms}

In this section, we restrict ourselves to random fields on $\X$ with no drift, i.e., random fields whose increments have a zero expectation \citep{chiles_delfiner_2012}. In such a case, the semivariogram of a random field $Z = \{Z(x,\omega): x \in \X, \omega \in \Omega \}$ is defined as
\begin{equation*}
   g(x,y) := \frac{1}{2} \int_{\Omega} [Z(x,\omega)-Z(y,\omega)]^2 \, \P({\rm d}\omega), \quad x, y \in \X.
\end{equation*}

\begin{theorem}
    \label{variogr}
    Let ${\cE}$ be a closed subset of $\R$. A mapping $g: \X \times \X \to \R$ is the semivariogram of a random field on $\X$ with no drift and with values in ${\cE}$ if, and only if, it fulfills the following conditions:
    \begin{enumerate}
        \item Symmetry: $g(x,y)=g(y,x)$ for any $x, y \in \X$.
        \item Gap inequalities: for any positive integer $n$, any set of points $x_1,\ldots,x_n$ in $\X$ and any real symmetric matrix $\bLambda= [\lambda_{k \ell}]_{k,\ell=1}^n$, one has 
        \begin{equation*}
            \langle \bLambda,\bG \rangle =\sum_{k=1}^n \sum_{\ell=1}^n \lambda_{k \ell} \, g(x_{k},x_{\ell}) \leq \eta(\bLambda,{\cE}),
        \end{equation*}
        where $\bG=[{g}(x_k,x_{\ell})]_{k,\ell=1}^n$, and $\eta(\bLambda,{\cE})$ is the $\eta$-gap of $\bLambda$ on $\cE$ as per Definition \ref{defgap3}.\\
    \end{enumerate}
\end{theorem}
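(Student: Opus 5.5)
Necessity is immediate. If $Z$ has no drift and values in $\cE$, then $g$ is symmetric. For fixed $x_1,\ldots,x_n$ and symmetric $\bLambda$, each realization $\bz=(Z(x_1,\omega),\ldots,Z(x_n,\omega))\in\cE^n$ satisfies
\[
\frac12\sum_{k,\ell}\lambda_{k\ell}[z_k-z_\ell]^2\le\eta(\bLambda,\cE).
\]
Integrating with respect to $\P$ gives $\langle\bLambda,\bG\rangle\le\eta(\bLambda,\cE)$, provided the integrals exist. They do: finiteness of $g$ means the increments are square integrable.

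For sufficiency I follow the route presumably used for Theorem \ref{matrixgapineq}, combining a finite-dimensional duality argument with Kolmogorov extension. Fix a finite set $\{x_1,\ldots,x_n\}$ and put $\bG=[g(x_k,x_\ell)]$. Since $\eta$ only sees increments and the target field has no drift, the problem on this set reduces to the following. Find a probability measure on $\cE^n$ whose \emph{increment moments} satisfy
\[
\tfrac12\E[(Z_k-Z_\ell)^2]=g(x_k,x_\ell)\quad\text{for all }k,\ell,
\]
and whose increments have zero mean. I will produce it in two stages.

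In the first stage, consider the convex cone, or rather the closed convex hull $\mathcal{K}$, of the symmetric matrices $\bigl[\tfrac12(z_k-z_\ell)^2\bigr]_{k,\ell}$ for $\bz\in\cE^n$. The gap inequalities say exactly that every linear functional $\langle\bLambda,\cdot\rangle$ takes a value at $\bG$ no larger than its supremum over these matrices. By the Hahn--Banach (separating hyperplane) theorem in the finite-dimensional space of symmetric matrices with zero diagonal, this places $\bG$ in $\mathcal{K}$. Note that $g(x,x)=0$ is forced: take $\bLambda$ equal to $\pm$ the matrix unit at $(k,k)$, for which $\eta=0$ by the diagonal-invariance property.

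Then I need to upgrade ``closed convex hull'' to ``an actual mixture''. Translating $\bz$ by a constant is not allowed, since $\cE$ is not translation invariant, but the pattern matrix depends only on the differences. Following the proof of Theorem \ref{matrixgapineq}, I will use closedness of $\cE$. A limit of convex combinations (finitely many atoms, at most $n^2+1$ of them by Carathéodory) has atoms that either stay bounded in difference space, in which case compactness and closedness of $\cE$ give a limit point in $\cE^n$, or escape with vanishing weights. This tail issue is the main obstacle. I would handle it as the paper does for $\gamma$, via the earlier trick of allowing probability measures (Riesz/Prokhorov on the compactified space), or note that escaping atoms with weight $w_j$ and size $s_j^2$ contribute $w_js_j^2$, which can converge to a nonzero matrix. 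In that case I would exhibit the limit as a genuine mixture whose masses spread along rays of $\cE$, which is possible when $\cE$ is unbounded in the relevant directions.

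In the second stage, to obtain zero-mean increments, I symmetrize. Mix the measure $\mu$ with its image under $\bz\mapsto -\bz$ only if $\cE=\check\cE$; otherwise use randomization, namely a random permutation trick or an independent choice of reflection. The cleaner option is to note that a no-drift requirement on a finite set is compatible with picking $Z=\epsilon\,\bz+c$; I expect this to need the convex structure, and I would check the paper's convention carefully. Finally, the resulting finite-dimensional laws must be made consistent across sets of points. As in Theorem \ref{matrixgapineq}, I would take the measure on $\cE^\X$ as a weak limit, using compactness, over the directed family of finite sets of the set of measures on $\cE^{\X}$ matching $g$ on each finite set. This avoids consistency issues and is where closedness of $\cE$ enters again.
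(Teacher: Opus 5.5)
\emph{Where you match the paper.} Your necessity argument is fine. Your first stage, up to ``$\bG$ lies in the \emph{closed} convex hull of the patterns $[\tfrac12(z_k-z_\ell)^2]_{k,\ell}$, $\bz\in\cE^n$'', is exactly the paper's proof. The paper simply reruns Case~4 of Theorem~\ref{matrixgapineq} with these patterns in place of $\bz^\top\bz$.

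\emph{Where the gaps are.} They sit in the three steps you flag yourself, and your proposed repairs do not close them. The first two cannot be closed at all, because the statement is false there. The paper does not close them either: it asserts the hull is closed ``insofar as $\cE$ is closed'', never deals with zero-mean increments, and assumes separately chosen $\P_n$ are consistent.

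\emph{Closure.} For bounded $\cE$ the hull is compact and Carath\'eodory gives a finite mixture. For unbounded $\cE$, escaping atoms can leave limits that are not mixtures, and no ``spreading along rays'' helps. Take $\cE=\{0,1,3\}\cup\{10^j:j\ge1\}$ (closed), four points, $c>0$, and
$g_{12}=\tfrac12$, $g_{13}=\tfrac92$, $g_{23}=2$, $g_{14}=c$, $g_{24}=\tfrac12+c$, $g_{34}=\tfrac92+c$.
This $g$ is the limit of the half mean-square increments of $(1-w_j)\,\delta_{(0,1,3,0)}+w_j\,\delta_{(0,1,3,10^j)}$ with $w_j=2c\,10^{-2j}$, so it satisfies every gap inequality. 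Now suppose some $\cE$-valued $Z$ realizes it, and put $D_1=Z_2-Z_1$, $D_2=Z_3-Z_2$. From $\E D_1^2=1$, $\E D_2^2=4$ and $\E(D_1+D_2)^2=9$ we get $\E D_1D_2=2$, hence $\E(D_2-2D_1)^2=0$. The only triples of $\cE$ with $z_3-z_2=2(z_2-z_1)$ are constants and $(0,1,3)$. So $\E D_1^2=1$ forces $(Z_1,Z_2,Z_3)=(0,1,3)$ a.s. Then $g_{14}$ and $g_{24}$ give $\E Z_4^2=2c$ and $\E Z_4=0$, which is impossible for $Z_4\ge 0$.

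\emph{No drift.} This fails even for compact $\cE$. Take $\cE=\{0,1,3\}$, three points, and $g$ the pattern of the single vector $(0,1,3)$, i.e.\ $g_{12}=\tfrac12$, $g_{23}=2$, $g_{13}=\tfrac92$. The gap inequalities hold. But $\E(Z_1-Z_3)^2=9$ forces $\{Z_1,Z_3\}=\{0,3\}$. Then $\E[(Z_1-Z_2)^2+(Z_2-Z_3)^2]=5$ forces $Z_2=1$, and $\E(Z_1-Z_2)^2=1$ forces $Z_1=0$. The only realization is the deterministic $(0,1,3)$, which has drift, so no permutation or $\epsilon\bz+c$ device can work. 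Your reflection does work when $\cE$ is symmetric about some point $a$: average the measure with its image under $\bz\mapsto 2a\boldsymbol{1}-\bz$, which keeps the patterns and negates the increments.

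\emph{Consistency.} Your weak-$*$ compactness argument over finite subsets of $\X$ is correct. It is sounder than the paper's projection argument, since representing measures are not unique and separately chosen ones need not be consistent. But it needs $\cE^\X$ compact.

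\emph{What your route proves.} It gives the statement for compact $\cE$ when ``no drift'' is dropped (i.e.\ $g=\tfrac12\E[(Z(x)-Z(y))^2]$), and the statement as written for compact $\cE$ symmetric about a point. Beyond that it cannot be completed.
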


By choosing diagonal matrices for $\bLambda$, it is seen that the mapping $g$ must be zero on the diagonal of $\X \times \X$. Theorem \ref{variogr} can therefore be restated as follows:\\

\begin{theorem}
    \label{variogr2}
    Let ${\cE}$ be a closed subset of $\R$. A mapping $g: \X \times \X \to \R$ is the semivariogram of a random field on $\X$ with no drift and with values in ${\cE}$ if, and only if, it fulfills the following conditions:
    \begin{enumerate}
        \item Symmetry: $g(x,y)=g(y,x)$ for any $x, y \in \X$.
        \item Value on the diagonal of $\X \times \X$: $g(x,x)=0$ for any $x \in \X$.
        \item Gap inequalities: for any positive integer $n$, any set of points $x_1,\ldots,x_n$ in $\X$ and any real symmetric matrix $\bLambda= [\lambda_{k \ell}]_{k,\ell=1}^n$ with zero diagonal entries, one has 
        \begin{equation}
        \label{gap4variog}
            \langle \bLambda,\bG \rangle =\sum_{k=1}^n \sum_{\ell=1}^n \lambda_{k \ell} \, g(x_{k},x_{\ell}) \leq \eta(\bLambda,{\cE}),
        \end{equation}
        where $\bG=[{g}(x_k,x_{\ell})]_{k,\ell=1}^n$.\\
    \end{enumerate}
\end{theorem}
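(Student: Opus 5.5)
We will deduce Theorem \ref{variogr2} from Theorem \ref{variogr}, which we take as established. The task is to show that, for a symmetric mapping $g$, the gap inequalities of Theorem \ref{variogr} over all real symmetric $\bLambda$ are equivalent to the two conditions $g(x,x)=0$ and the gap inequalities (\ref{gap4variog}) over symmetric $\bLambda$ with zero diagonal. The key tool is the second listed property of the $\eta$-gap: $\eta(\bLambda,\cE)=\eta(\bLambda',\cE)$ whenever $\bLambda-\bLambda'$ is diagonal. This holds because the terms $k=\ell$ in (\ref{etagap}) carry the factor $[z_k-z_k]^2=0$.

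\emph{Necessity.} Assume the conditions of Theorem \ref{variogr} hold. Take $n=1$, a point $x_1=x$ and $\bLambda=[\lambda]$ with $\lambda\in\R$. Then $\eta(\bLambda,\cE)=\sup\{\tfrac12\lambda(z-z)^2\}=0$, since $\cE$ is nonempty. The gap inequality reads $\lambda\, g(x,x)\le 0$ for every real $\lambda$, and choosing $\lambda=\pm1$ forces $g(x,x)=0$. Condition (\ref{gap4variog}) is simply the special case of the inequality of Theorem \ref{variogr} with zero-diagonal $\bLambda$. Equivalently, one can argue directly: any semivariogram satisfies $g(x,x)=\tfrac12\E[0]=0$.

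\emph{Sufficiency.} Assume symmetry, $g(x,x)=0$ and (\ref{gap4variog}). Given arbitrary points $x_1,\ldots,x_n$ and a real symmetric $\bLambda$, let $\bLambda'$ be $\bLambda$ with its diagonal entries set to zero. Since $\bG$ has zero diagonal, $\langle\bLambda,\bG\rangle=\langle\bLambda',\bG\rangle$. Applying (\ref{gap4variog}) to $\bLambda'$ and then the diagonal-invariance property gives
\begin{equation*}
\langle\bLambda,\bG\rangle=\langle\bLambda',\bG\rangle\le\eta(\bLambda',\cE)=\eta(\bLambda,\cE).
\end{equation*}
So the hypotheses of Theorem \ref{variogr} are met, and $g$ is a semivariogram of a driftless $\cE$-valued random field.

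The argument is essentially bookkeeping once Theorem \ref{variogr} is granted. The only point requiring care is repeated points among $x_1,\ldots,x_n$: the diagonal of $\bG$ is still zero by condition 2, and any off-diagonal entries $g(x_k,x_k)$ arising from $x_k=x_\ell$ with $k\neq\ell$ also vanish, so no case is lost. The genuine difficulty, namely constructing the random field from the gap inequalities, lies entirely in Theorem \ref{variogr}. That construction would presumably go through a Hahn–Banach/convex-cone argument on finite-dimensional distributions, using closedness of $\cE$, followed by Kolmogorov extension; we treat it as given.
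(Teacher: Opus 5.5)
Your reduction is correct, and it is the paper's own route. The paper calls Theorems~\ref{variogr} and~\ref{variogr2} ``clearly equivalent'': diagonal matrices $\bLambda$ force $g(x,x)=0$, and $\eta$ does not see the diagonal of $\bLambda$. It then proves both at once by rerunning case~4 of Theorem~\ref{matrixgapineq} with the rank-one matrices of $\bz$ replaced by $[\tfrac12(z_k-z_\ell)^2]_{k,\ell}$.

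\textbf{The gap is in the result you take as given, and the paper shares it.} The sufficiency half of Theorem~\ref{variogr}, and hence of the statement you are proving, is false as stated. The construction you anticipate yields a law $\P_n$ on $\cE^n$ with $\tfrac12\int(z_k-z_\ell)^2\,\P_n({\rm d}\bz)=g(x_k,x_\ell)$. Nothing controls $\int(z_k-z_\ell)\,\P_n({\rm d}\bz)$, i.e.\ the drift, and for an asymmetric codomain this cannot be repaired. Take $\cE=\{0,1,3\}$, $\X=\{a,b,c\}$, and the $g$ of the deterministic vector $(0,1,3)$: $g(a,b)=\tfrac12$, $g(b,c)=2$, $g(a,c)=\tfrac92$, zero on the diagonal. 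All gap inequalities hold, since $\langle\bLambda,\bG\rangle$ is itself one of the competitors in the supremum defining $\eta(\bLambda,\cE)$, repeated points included. Now let $Z$ be $\cE$-valued, driftless, with this semivariogram.
\begin{itemize}
\item $(Z(a)-Z(c))^2\le 9$ has mean $9$, so $\{Z(a),Z(c)\}=\{0,3\}$ a.s.
\item On that event $(Z(a)-Z(b))^2+(Z(b)-Z(c))^2\ge 5$ has mean $5$, so $Z(b)=1$ a.s.
\item Zero drift gives $\P(Z(a)=0)=\tfrac12$, whence $\E(Z(a)-Z(b))^2=\tfrac52\neq 1=2g(a,b)$.
\end{itemize}
So no such $Z$ exists.

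\textbf{What can be salvaged.} For compact $\cE$, the gap inequalities do characterize $\tfrac12\E(Z(x)-Z(y))^2$ over all $\cE$-valued fields, with no drift requirement. The proof is finite-dimensional separation plus a compactness argument on laws on $\cE^\X$. If moreover $\cE=c-\cE$ for some $c$, replacing $Z$ by $Z$ or $c-Z$ according to an independent fair coin preserves squared increments and removes the drift. Under that extra hypothesis your reduction yields a correct theorem.

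For unbounded $\cE$ the same step has a second defect. Case~4 claims the convex hull ${\cal H}_n$ is closed, which fails, e.g., for $\cE=\Z$, even though $\Z$ is symmetric. The $g$ of the real vector $(0,1,1+\sqrt2)$ satisfies every gap inequality, because $\eta(\cdot,\Z)=\eta(\cdot,\R)$. Yet integer increments $A=Z(b)-Z(a)$, $B=Z(c)-Z(b)$ would need $\E A^2=1$, $\E B^2=2$, $\E AB=\sqrt2$. Equality in Cauchy--Schwarz then forces $B=\sqrt2\,A$ a.s., which is impossible for integers unless $A=0$.
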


\begin{remark}
  For $\cE = \R$, Theorem \ref{variogr2} leads to the well-known result that a symmetric function is the variogram of a random field with no drift if, and only if, it vanishes on the diagonal of $\X \times \X$ and is conditionally negative semidefinite, see Appendix \ref{app:particularcases} for details.
\end{remark}


\subsection{Characterization of spatial moments beyond covariance functions}

Theorem \ref{matrixgapineq} can be adapted, mutatis mutandis, to  determine whether a mapping $\rho$ defined on $\X^q$ can be the spatial moment of order $q \in \N_{\geq 2}$ of a random field $Z$ on $\X$ with values in a compact subset of $\R$, i.e., whether one can write $\rho(x_1,\ldots,x_q) = \E(Z(x_1) \ldots Z(x_q))$ for any set of points $x_1,\ldots,x_q$ in $\X$. The proof is a direct extension to that of Theorem \ref{matrixgapineq} (see Appendix \ref{app:proofs}) and is omitted.\\

\begin{theorem}
    \label{highorder}
    Let ${\cE}$ be a closed subset of $\R$ and $q$ an integer greater than $1$. A mapping $\rho_q: \X^q \to \R$ is the $q$-th spatial moment of a random field on $\X$ with values in ${\cE}$ if, and only if, it fulfills the following conditions:
    \begin{enumerate}
        \item Symmetry: $\rho_q(x_1,x_2,\ldots,x_q)=\rho_q(x_{\sigma_1},x_{\sigma_2},\ldots,x_{\sigma_q})$ for any set of points $x_1,\ldots,x_q \in \X$ and any permutation $\{\sigma_1,\ldots,\sigma_q\}$ of $\{1,\ldots,q\}$.
        \item Gap inequalities: for any positive integer $n$, any set of points $x_1,\ldots,x_n$ in $\X$ and any real-valued $q$-dimensional array $\bLambda= [\lambda_{k_1,\ldots,k_q}]_{k_1,\ldots,k_q =1}^n$, one has 
        \begin{equation*}
            \sum_{k_1=1}^n \ldots \sum_{k_q=1}^n \lambda_{k_1,\ldots,k_q} \, \rho_q(x_{k_1},\ldots,x_{k_q}) \geq \gamma(\bLambda,{\cE}),
        \end{equation*}
        where $\gamma(\bLambda,{\cE})$ is the $\gamma$-gap of $\bLambda$ on $\cE$ as per Definition \ref{defgap3}.
    \end{enumerate}
\end{theorem}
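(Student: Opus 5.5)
Necessity is immediate and handled first. Symmetry of $\rho_q(x_1,\ldots,x_q)=\E(Z(x_1)\cdots Z(x_q))$ follows from the commutativity of the product. For the gap inequality, fix $n$, points $x_1,\ldots,x_n$ and an array $\bLambda$. For every $\omega$ the vector $(Z(x_1,\omega),\ldots,Z(x_n,\omega))$ lies in $\cE^n$, so the random variable $\sum_{k_1,\ldots,k_q}\lambda_{k_1,\ldots,k_q}Z(x_{k_1},\omega)\cdots Z(x_{k_q},\omega)$ is pointwise at least $\gamma(\bLambda,\cE)$. Taking expectations gives the inequality. (If $\gamma=-\infty$ there is nothing to prove; integrability of the $q$-fold products is part of $\rho_q$ being the $q$-th moment.)

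For sufficiency I would mirror the proof of Theorem \ref{matrixgapineq}. The strategy is to build the finite-dimensional distributions first and then invoke the Daniell--Kolmogorov extension. Fix distinct points $x_1,\ldots,x_n$. I need a probability measure $\mu$ on $\cE^n$ with $\int z_{k_1}\cdots z_{k_q}\,{\rm d}\mu(\bz)=\rho_q(x_{k_1},\ldots,x_{k_q})$ for all index tuples. Consider the moment map $\bz\mapsto \bz^{\otimes q}$, the symmetric $q$-array with entries $z_{k_1}\cdots z_{k_q}$. Let $K$ be the closed convex hull of $\{\bz^{\otimes q}:\bz\in\cE^n\}$ in the finite-dimensional space of $q$-arrays. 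The gap inequalities say exactly that the array $\bR_q=[\rho_q(x_{k_1},\ldots,x_{k_q})]$ satisfies every linear inequality $\langle\bLambda,\cdot\rangle\ge\gamma(\bLambda,\cE)$ that holds on this set. By the Hahn--Banach (finite-dimensional separation) theorem, this gives $\bR_q\in K$.

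The main obstacle is passing from $\bR_q\in K$, a closed convex hull, to an actual probability measure on $\cE^n$ when $\cE$ is closed but unbounded; for compact $\cE$ this is straightforward. For compact $\cE$, the image of $\cE^n$ under the moment map is compact. By Carathéodory's theorem its convex hull is then already closed, so $\bR_q$ is a finite convex combination of points $\bz^{\otimes q}$, which gives an atomic measure. For general closed $\cE$ I would follow whatever device was used for Theorem \ref{matrixgapineq}. First I would try a compactification and tightness argument: approximate $\bR_q$ by convex combinations $\mu_m$ and use the control on the diagonal moments $\rho_q(x_k,\ldots,x_k)$ to get tightness and uniform integrability. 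This works directly for even $q$ through $\E Z^q$. For odd $q$ I would take $\bLambda$ concentrated on the diagonal with sign chosen according to $\cE$, or else restrict to the compact case mentioned in the preceding paragraph. Closedness of $\cE$ then ensures the weak limit is supported in $\cE^n$.

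Finally, consistency: the measures constructed for different finite sets need not be marginals of one another. To handle this, I would choose them coherently by a compactness (Tychonoff / projective limit) argument. For each finite set $F$, the set of measures on $\cE^F$ matching the prescribed moments is nonempty, convex and weakly compact, by the tightness above. Projections map the solution set for $F'\supset F$ into the solution set for $F$. The finite intersection property then yields a consistent family, and Kolmogorov's extension theorem gives the random field on $\X$ with values in $\cE$ and $q$-th moment $\rho_q$.
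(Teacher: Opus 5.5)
Your argument is complete when $\cE$ is compact. In that case the image of $\cE^n$ under $\bz\mapsto\bz^{\otimes q}$ is compact, so by Carath\'eodory its convex hull is closed, and separation places $\bR_q$ in it. Your inverse-limit argument then produces a genuinely consistent family before Kolmogorov is invoked: the sets of representing measures on $\cE^F$ are nonempty and weakly compact, and projections map them into each other. This is more careful than the paper, whose omitted proof is the ``direct extension'' of the proof of Theorem \ref{matrixgapineq} (Case 4 for closed $\cE$). There one only notes that marginalizing a representing measure for $\bR_n$ gives one for $\bR_{n-1}$, which by itself does not yield a coherent choice over all finite subsets of an infinite $\X$.

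The genuine gap is the unbounded case. Bounded diagonal moments $\rho_q(x_k,\ldots,x_k)$ give tightness of your approximating measures. They do not give uniform integrability of the degree-$q$ monomials: an atom of vanishing probability can escape to infinity while carrying a non-vanishing share of the $q$-th moments, so the weak limit need not have moment array $\bR_q$. For the same reason your solution sets are not weakly closed when $\cE$ is unbounded. On $\cE=\Z$, the measure $(1-m^{-2})\delta_0+m^{-2}\delta_m$ has second moment $1$ for every $m$ but converges weakly to $\delta_0$.

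This cannot be repaired, because the statement is false for unbounded closed $\cE$. Take $\cE=\Z$, $q=2$, $\X=\{a,b\}$ and $\rho(x,y)=v(x)v(y)$ with $v(a)=1$, $v(b)=\sqrt{2}$. For symmetric $\bLambda$ (which suffices by symmetry), Lemmas \ref{lem:negeig0} and \ref{lem:negeig} give $\gamma(\bLambda,\Z)=-\infty$ if $\bLambda$ has a negative eigenvalue and $\gamma(\bLambda,\Z)=0$ otherwise. In the latter case $\langle\bLambda,\bR\rangle=\boldsymbol{v}\bLambda\boldsymbol{v}^\top\geq 0$ with $\boldsymbol{v}=(v(x_1),\ldots,v(x_n))$, so every gap inequality holds. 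Yet any $\Z$-valued pair $(Z_a,Z_b)$ with these moments satisfies $\E[(Z_b-\sqrt{2}Z_a)^2]=2-4+2=0$. Hence $Z_b=\sqrt{2}Z_a$ a.s., which forces $Z_a=0$ a.s.\ and contradicts $\E(Z_a^2)=1$. The products $v(x_1)\cdots v(x_q)$ give the same counterexample for every even $q$.

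Here $\bR$ lies in your closed convex hull $K$ but not in the convex hull of $\{\bz^\top\bz:\bz\in\Z^2\}$. The paper's proof breaks at exactly this point: Case 4 asserts that this convex hull ``is a closed set insofar as $\cE$ is closed''. So you located the weak spot correctly, but no tightness device can close it. The theorem has to be restricted to compact $\cE$, as the sentence introducing it in the paper indicates, and in that setting your proof stands.
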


\bigskip

\begin{example}
\label{example2}
    Let $q = 2q^\prime$ be an even integer. Then, the mapping
    \begin{equation*}
        \rho_q(x_{1},\ldots,x_{q}) = \text{haf}\big([\rho(x_{k},x_{\ell})]_{k,\ell=1}^q\big),
    \end{equation*}
    where $\rho: \X \times \X \to \R$ is a symmetric positive semidefinite function and $\text{haf}$ is the hafnian, is a valid $q$-th spatial moment of a random field valued in $\R$. Actually, $\rho_q$ is nothing but the $q$-th spatial moment of a zero-mean Gaussian random field with covariance function $\rho$ \citep{Isserlis1918}. 
    
    Recall that the hafnian of a symmetric matrix $\bR=[r_{k\ell}]_{k,\ell=1}^q$ is defined as:
    \begin{equation*}
        \text{haf}(\bR) = \sum_{K,L}  r_{k_1,\ell_1} \ldots r_{k_{q^\prime},\ell_{q^\prime}},
    \end{equation*}
    where the sum is extended over all the decompositions of the set $\{1,\ldots,q\}$ into two disjoint subsets $K=\{k_1,\ldots,k_{q^\prime}\}$ and $L=\{\ell_1,\ldots,\ell_{q^\prime}\}$ such that $k_1<\ldots<k_{q^\prime}$, $\ell_1<\ldots<\ell_{q^\prime}$ and $k_i<\ell_i$ for $i = 1,\ldots,q^\prime$.
\end{example}


\subsection{Multivariate random fields}

For a multivariate random field $\bZ=(Z_1,\ldots,Z_p)$ on $\X$, the non-centered covariance and the semivariogram become matrix-valued:
\begin{equation*}
    \begin{split}
        \brho(x,y) &:= \int_{\Omega} \bZ^\top(x,\omega) \bZ(y,\omega) \, \P({\rm d}\omega), \quad x, y \in \X,\\
        \boldsymbol{g}(x,y) &:= \frac{1}{2} \int_{\Omega} [\boldsymbol{1}^\top \bZ(x,\omega)-\bZ^\top(y,\omega) \boldsymbol{1}]^2 \, \P({\rm d}\omega), \quad x, y \in \X,
    \end{split}
\end{equation*}
the latter being known as the pseudo semivariogram \citep{Myers1991}.\\

All the results of Sections \ref{covariances} and \ref{variograms} generalize to the multivariate setting, by viewing $\bZ$ as a univariate random field defined on $\X \times \{1,\ldots,p\}$. An alternative is to adapt the proofs of the previous theorems to allow the codomains of the univariate random fields $Z_1,\ldots, Z_p$ to be different, i.e., to deal with a codomain of the form ${\cE}={\cE}_1 \times \ldots \times {\cE}_p$ for the $p$-variate random field $\bZ$. In the most general setting, this codomain can be a closed subset of $\R^p$ and not only a Cartesian product of closed sets of $\R$. For instance, in mineral resource evaluation, one can think of jointly modeling an ore grade and a rock type domain by a bivariate random field with codomain ${\cE}=[0,100] \times \{0,1\}$, or modeling a set of compositional variables by a $p$-variate random field with $\cE$ being the $p$-dimensional standard simplex.

This leads to the following straightforward multivariate extensions of Theorems \ref{matrixgapineq} and \ref{variogr2}, which involve multivariate extensions of the $\gamma$-gap and $\eta$-gap stated in Definitions \ref{defgap1} and \ref{defgap2}. The proofs are omitted.\\

\begin{theorem}
    \label{matrixgapineqmultiv}
    Let ${\cE}$ be a closed subset of $\R^p$. A matrix-valued mapping $\brho: \X \times \X \to \R^{p \times p}$ is the non-centered matrix-valued covariance of a $p$-variate random field on $\X$ with values in ${\cE}$ if, and only if, it fulfills the following conditions:
    \begin{enumerate}
        \item Symmetry: $\brho(x,y)=\brho^\top(y,x)$ for any $x,y \in \X$.
        \item Gap inequalities: for any positive integer $n$, any real symmetric matrix $\bLambda = [\lambda_{k \ell}]_{k,\ell=1}^{p\times n}$ and any set of points $x_1,\ldots,x_n$ in $\X$, one has 
        \begin{equation*}
        \label{matrixgapmult}
            \langle \bLambda, \bR \rangle \geq \gamma(\bLambda,{\cE}),
        \end{equation*}
        where $\bR=[\brho(x_k,x_{\ell})]_{k,\ell=1}^n$ and $\gamma(\bLambda,{\cE}) = \inf \{ \bz \bLambda \bz^\top: \bz \in {\cE}^n \}$.\\
    \end{enumerate}
\end{theorem}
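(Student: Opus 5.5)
The plan mirrors the univariate proof of Theorem \ref{matrixgapineq}, treating the $p$-variate field as a family of $\cE$-valued random vectors indexed by $\X$. Throughout, $\bz=(\bz_1,\ldots,\bz_n)\in\cE^n$ is read as a row vector of length $pn$ obtained by concatenating the $p$-vectors $\bz_k\in\cE$.

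\emph{Necessity.} Let $\bZ$ be a $p$-variate random field valued in $\cE$ with non-centered covariance $\brho$.
\begin{enumerate}
\item Symmetry follows from $\E(Z_i(x)Z_j(y))=\E(Z_j(y)Z_i(x))$.
\item Fix points $x_1,\ldots,x_n$ and a symmetric $\bLambda$ of order $pn$. Set $\bZ_n=(\bZ(x_1),\ldots,\bZ(x_n))$, which lies in $\cE^n$ almost surely. Then $\bZ_n\bLambda\bZ_n^\top\geq\gamma(\bLambda,\cE)$ almost surely.
\item Taking expectations gives $\langle\bLambda,\bR\rangle=\E(\bZ_n\bLambda\bZ_n^\top)\geq\gamma(\bLambda,\cE)$, where $\bR$ is the block matrix with blocks $\brho(x_k,x_\ell)$.
\end{enumerate}

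\emph{Sufficiency: finite-dimensional step.} Fix finitely many points and let $K=\cE^n\subset\R^{pn}$, which is closed. Consider the convex cone $\mathcal{C}$ generated by the rank-one matrices $\bz^\top\bz$ with $\bz\in K$, together with its closure. The key claim is that the gap inequalities place $\bR$ in the closure of $\{\E(\bz^\top\bz)\}$ over probability measures on $K$.
\begin{enumerate}
\item Suppose $\bR$ is not in the closed convex hull $\overline{\mathrm{conv}}\{\bz^\top\bz:\bz\in K\}$. Hahn--Banach in the space of symmetric matrices with the trace inner product gives a symmetric $\bLambda$ with $\langle\bLambda,\bR\rangle<\inf_{\bz\in K}\bz\bLambda\bz^\top=\gamma(\bLambda,\cE)$. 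This contradicts the gap inequality, so $\bR$ lies in that closed hull.
\item We then need $\bR$ to be an actual moment matrix $\E(\bz^\top\bz)$ of a probability measure on $K$. Points of the convex hull are moments of finitely supported measures. For a limit point, take a sequence of discrete measures $\P_m$ on $K$ whose second-moment matrices converge to $\bR$.
\item The traces $\E_m|\bz|^2$ stay bounded, so $(\P_m)$ is tight. Prokhorov's theorem yields a weakly convergent subsequence, whose limit is supported on $K$ because $K$ is closed.
\end{enumerate}

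The main obstacle is in this last step: second moments need not pass to the weak limit. Under tightness alone they are only lower semicontinuous, and mass escaping to infinity could contribute to $\bR$. I would handle this as in the univariate proof. The gap inequalities with suitably chosen $\bLambda$ (for example $\bLambda=-\bz_0^\top\bz_0$ combined with growth terms) control the escaping mass, or one can work on the one-point compactification and argue that any mass at infinity would violate some gap inequality. Alternatively, I would invoke whatever argument the proof of Theorem \ref{matrixgapineq} uses for closed, possibly unbounded $\cE$, since it transfers verbatim with $\R$ replaced by $\R^p$ and scalar products replaced by vector ones.

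\emph{Kolmogorov extension.} Consistency across point sets is not automatic, because the measure found on $K$ need not be unique. As in the univariate case, I would avoid this by working directly with the product space $\cE^\X$. Define $\Gamma$ as the set of probability measures on $\cE^\X$, with the product topology, whose second moments match $\brho$ on a finite set $F$. The argument above shows that each such $\Gamma_F$ is nonempty, and the same argument applied to $F\cup F'$ gives the finite-intersection property. Compactness then yields a measure matching every finite set. For unbounded $\cE$ this compactness comes from using the compactified product and the uniform moment control noted above, and it is the technical crux of the sufficiency proof.
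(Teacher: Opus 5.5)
Your necessity argument is fine, and so is the separation step: the gap inequalities say exactly that each block matrix $\bR$ lies in the closed convex hull $\overline{\mathrm{conv}}\,\mathcal{P}_n$ of $\mathcal{P}_n=\{\bz^\top\bz:\bz\in\cE^n\}$. The genuine gap is the step you flag yourself: passing from $\overline{\mathrm{conv}}\,\mathcal{P}_n$ to an actual second-moment matrix. It cannot be filled, because the statement is false for unbounded closed $\cE$.

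Take $p=2$, $\cE=\Z^2$, $\X=\{x\}$ and $\brho(x,x)=\begin{pmatrix}1&\sqrt{2}\\ \sqrt{2}&2\end{pmatrix}$. For every $n$, $\bR$ is the $n\times n$ block matrix with all blocks equal to $\brho(x,x)$, hence positive semidefinite. Meanwhile $\gamma(\bLambda,\Z^2)=\inf_{\bz\in\Z^{2n}}\bz\bLambda\bz^\top$ is $-\infty$ when $\bLambda$ has a negative eigenvalue and $0$ when $\bLambda$ is positive semidefinite (Lemmas~\ref{lem:negeig0} and~\ref{lem:negeig}, with $\Z^{2n}$ in place of $\Z^n$). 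So symmetry and every gap inequality hold. But a $\Z^2$-valued $(Z_1,Z_2)$ with these moments would satisfy $\E(\sqrt{2}Z_1-Z_2)^2=2-4+2=0$, i.e.\ $Z_2=\sqrt{2}Z_1$ almost surely. Since $\sqrt{2}$ is irrational, this forces $Z_1=Z_2=0$, contradicting $\E Z_1^2=1$.

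The approximating laws put mass $q_m^{-2}$ at $(q_m,p_m)$ with $p_m/q_m\to\sqrt{2}$ and the rest at the origin. This is exactly your escaping-mass scenario: the weak limit $\delta_0$ has lost all its second moments. The failure is not arithmetic. With $\cE=\{(t,t^2):t\in\R\}$ and $\brho(x,x)=\mathrm{diag}(0,1)$, approximated by $t\sim(1-\epsilon)\delta_0+\tfrac{\epsilon}{2}(\delta_{\epsilon^{-1/4}}+\delta_{-\epsilon^{-1/4}})$, the same thing happens.

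None of your remedies can work, for a structural reason. Each gap inequality is a closed half-space condition on $\bR$, so jointly they cut out precisely $\overline{\mathrm{conv}}\,\mathcal{P}_n$. They therefore cannot distinguish a realizable matrix from a limit of realizable ones. The one-point compactification does not help either: $z_iz_j$ is not continuous there, and the lost moments do not reappear as mass at infinity, since the escaping probability tends to zero.

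Deferring to Theorem~\ref{matrixgapineq} does not help. The paper's omitted proof is the transcription of Case~4 there, which simply asserts that $\mathrm{conv}\,\mathcal{P}_n$ is closed ``insofar as $\cE$ is closed''. The same example with $p=1$, $\cE=\Z$, $\X=\{x_1,x_2\}$ refutes that claim, and with it Theorem~\ref{th:PSD} for $\cE=\Z$.

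What your plan does prove is the compact case. There $\bz\mapsto\bz^\top\bz$ is bounded and continuous on $K$, so Prokhorov limits keep their second moments. Your sets $\Gamma_F$ are weak-$*$ closed in the compact space of probability measures on $\cE^\X$, and the finite-intersection argument yields a consistent field. This is more careful than the paper's projection argument, which tacitly assumes the representing measures $\P_n$ can be chosen as marginals of one another.

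The finite-dimensional step also survives when $\cE$ is a closed cone. Then $\mathcal{P}_n$ is the cone over its compact trace-one slice, so its convex hull is closed. For a general closed $\cE$, the gap inequalities characterize only the pointwise closure of the set of non-centered covariances, in the spirit of Theorem~\ref{matrixgapineq_boundedset}.
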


In particular, if ${\cE}=\R^p$, the gap inequalities reduce to the positive semidefiniteness restriction (the matrix $\bR$ must be positive semidefinite). \\

\begin{theorem}
    \label{variogr2mult}
    Let ${\cE}$ be a closed subset of $\R^p$. A matrix-valued mapping $\boldsymbol{g}: \X \times \X \to \R^{p \times p}$ is the matrix-valued pseudo semivariogram of a $p$-variate random field on $\X$ with values in ${\cE}$ if, and only if, it fulfills the following conditions:
    \begin{enumerate}
        \item Symmetry: $\boldsymbol{g}(x,y)=\boldsymbol{g}^\top(y,x)$ for any $x, y \in \X$.
        \item Diagonal values: the diagonal entries of $\boldsymbol{g}(x,x)$ are equal to $0$ for any $x \in \X$.
        \item Gap inequalities: for any positive integer $n$, any set of points $x_1,\ldots,x_n$ in $\X$ and any real symmetric matrix $\bLambda = [\lambda_{k \ell}]_{k,\ell=1}^{p\times n}$ with zero diagonal entries, one has 
        \begin{equation*}
           \langle \bLambda, \bG \rangle \leq \eta(\bLambda,{\cE}),
        \end{equation*}
        where $\bG=[\boldsymbol{g}(x_k,x_{\ell})]_{k,\ell=1}^n$ and $\eta(\bLambda,{\cE}) = \sup_{\bz \in {\cE}^n} \{ \bz  (\bDelta-\bLambda) \bz^\top \}$, $\bDelta$ being the diagonal matrix of order $p \times n$ whose $k$-th diagonal entry is the sum of the entries in the $k$-th row of $\bLambda$.\\
    \end{enumerate}
\end{theorem}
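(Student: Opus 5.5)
The argument for the multivariate semivariogram result (Theorem \ref{variogr2mult}) is a reduction: rewrite the pseudo semivariogram as a linear functional of the non-centered covariance, then transfer the gap inequalities through a duality of the type behind Theorem \ref{matrixgapineq} and Lemma \ref{gamma2eta}. The dictionary comes from expanding the square,
\[
\boldsymbol{g}_{ij}(x,y)=\tfrac12\big(\rho_{ii}(x,x)+\rho_{jj}(y,y)\big)-\rho_{ij}(x,y).
\]
Consequently, for a symmetric $\bLambda$ of order $p\times n$ with zero diagonal and $\bDelta$ its diagonal of row sums,
\[
\langle\bLambda,\bG\rangle=\langle \bDelta-\bLambda,\bR\rangle,
\]
where $\bR$ is the (possibly non-existent) non-centered covariance block matrix. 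The realization-wise identity $\tfrac12\sum\lambda_{k\ell}(z_k-z_\ell)^2=\bz(\bDelta-\bLambda)\bz^\top$ is exactly Lemma \ref{gamma2eta}, and it explains the form of $\eta$ in the statement.

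\emph{Necessity.} Symmetry and the zero diagonal of each $\boldsymbol{g}(x,x)$ are immediate from the definition. For the gap inequality, apply the realization-wise identity to $\bz=(\bZ(x_1,\omega),\ldots,\bZ(x_n,\omega))\in\cE^n$ and take expectations. This gives $\langle\bLambda,\bG\rangle=\E[\bz(\bDelta-\bLambda)\bz^\top]\le\eta(\bLambda,\cE)$. Second moments are not needed here, since only increments enter; if necessary one truncates, or notes that $\bz(\bDelta-\bLambda)\bz^\top$ depends only on differences.

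\emph{Sufficiency.} I would mimic the proof of Theorem \ref{matrixgapineq}, working directly with the increments rather than through $\brho$.
\begin{enumerate}
\item Fix a finite set of points. Consider the convex hull $\cC_n$ of the matrices $\big[\tfrac12(z_k-z_\ell)^2\big]_{k,\ell}$ with $\bz\in\cE^n$; here $k,\ell$ range over the $pn$ pairs (point, component).
\item The $\eta$-gap is the support function of $\cC_n$ evaluated at $\bLambda$. Since the diagonal is fixed at zero, only zero-diagonal $\bLambda$ matter, and the gap inequalities say that $\bG$ lies in the closure of $\cC_n$ by the separating hyperplane theorem.
\item Upgrade membership in the closure to a genuine probability measure $\mu_n$ on $\cE^n$ with $\int \tfrac12(z_k-z_\ell)^2\,d\mu_n=g_{k\ell}$. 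Because only differences matter, translate so that $z_1$ ranges in a compact piece of $\cE$ near a fixed $e_0\in\cE$, or pass to the quotient by the diagonal translation. Use closedness of $\cE$, Carath\'eodory's theorem to obtain uniformly finitely supported approximants, and Prokhorov tightness (Markov bounds via $g_{1k}$) to extract a limit whose second-order increment moments equal $\bG$. Uniform integrability of $(z_k-z_\ell)^2$ must be checked, or handled via Fatou together with the reverse inequality obtained by testing $\bLambda$ with negative entries.
\item Pass from finite sets to all of $\X$ by compactness. For each finite $F\subset\X$, the set of laws on $\cE^F$ matching $\boldsymbol{g}$ is nonempty and tight after anchoring one coordinate. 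A projective-limit/Kolmogorov argument over the directed family of finite sets, via Tychonoff on the anchored compactified sample space $\cE^\X$, yields a random field with no drift structure. Centering of increments is automatic in the pseudo-variogram definition, which uses second moments only, as in Theorem \ref{variogr2}.
\end{enumerate}

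The main obstacle is Step 3: converting membership of $\bG$ in the closed convex hull into an exact realization when $\cE$ is unbounded. The closure may contain limits of distributions whose mass escapes to infinity, so I would first try to show that escaping mass produces, for a suitable zero-diagonal $\bLambda$ with negative off-diagonal part, a violation of the reverse inequality. This is the same mechanism that makes closedness necessary in Theorem \ref{matrixgapineq}.
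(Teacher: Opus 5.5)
Your plan follows the paper's own route. For Theorems~\ref{variogr} and~\ref{variogr2}, of which Theorem~\ref{variogr2mult} is declared a straightforward extension, the paper reruns Case~4 of the proof of Theorem~\ref{matrixgapineq} with $\bz\bz^\top$ replaced by the increment matrix. It separates $\bG$ from the convex hull and simply asserts that this hull is closed ``insofar as $\cE$ is closed''. You rightly single this out as the crux. However, your Step~3 cannot be completed by any argument, because for unbounded closed $\cE$ the statement fails.

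\emph{Counterexample.} Take $p=1$, $\cE=\Z$, $\X=\{x_1,x_2,x_3\}$ and $a\notin\Q$. Let $g$ be the semivariogram of the real field $(0,W,aW)$ with $W$ standard normal, so $g(x_1,x_2)=\frac12$, $g(x_1,x_3)=\frac{a^2}{2}$ and $g(x_2,x_3)=\frac{(1-a)^2}{2}$.
\begin{itemize}
\item One has $\eta(\bLambda,\Z)=\eta(\bLambda,\R)$: both equal $0$ if $\bDelta-\bLambda$ is negative semidefinite, and $+\infty$ otherwise by rational approximation and integer scaling.
\item Hence your necessity argument, applied to the Gaussian field, shows that $g$ fulfills conditions 1--3 for $\cE=\Z$.
\item Suppose $Y$ were $\Z$-valued with semivariogram $g$. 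The integer increments $U=Y(x_2)-Y(x_1)$ and $V=Y(x_3)-Y(x_1)$ would satisfy $\E U^2=1$, $\E V^2=a^2$ and $\E(U-V)^2=(1-a)^2$.
\item Hence $\E UV=a$ and $\E(V-aU)^2=0$, so $V=aU$ almost surely. Since $a$ is irrational this forces $U=0$ almost surely, contradicting $\E U^2=1$.
\end{itemize}
Repeating the field in every coordinate gives the same failure for $\cE=\Z^p$.

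The approximating laws put mass $q^{-2}$ at $(U,V)=(q,r)$ with $r/q\to a$ and the rest at $(0,0)$. This is exactly your escaping-mass scenario. Their weak limit is $\delta_0$, and Fatou only yields $\int\frac12(z_k-z_\ell)^2\,{\rm d}\mu\le g_{k\ell}$. No zero-diagonal $\bLambda$ produces the reverse inequality you were hoping for.

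\emph{What survives.} The argument, yours and the paper's, does prove the compact case. There the image of $\cE^n$ under $\bz\mapsto[\frac12(z_k-z_\ell)^2]_{k,\ell}$ is compact, and its convex hull is compact by Carath\'eodory. Separation then yields an exact finitely supported law, and no anchoring is needed. That matters, because translating realizations so that $z_1$ lies near a fixed $e_0$ is not available for general $\cE$, which need not be invariant under $\bz\mapsto\bz+c\boldsymbol{1}$.

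Your Step~4 is then the correct way to glue the finite-dimensional laws: either nonempty weak-$*$ compact sets of matching laws on $\cE^F$ forming an inverse system, or Tychonoff plus Riesz--Markov--Kakutani as in Case~3. This step is genuinely needed. The paper's observation that the marginal of $\P_n$ is admissible for $n-1$ points does not by itself yield a consistent family, because the Carath\'eodory measures are not unique.

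For unbounded $\cE$ the theorem needs an additional hypothesis, since closedness of $\cE$ alone does not make the hull closed.
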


In particular, if ${\cE}=\R^p$, the gap inequalities reduce to the conditional negative semidefiniteness restriction of \cite{Gesztesy2017}: the matrix $\bG$ must be conditionally negative semidefinite, i.e., $\blambda \,\bG \, \blambda^\top \leq 0$ for all $\blambda \in \R^{p \times n}$ whose elements sum to zero.\\


\section{Gap inequalities in a continuous setting}
\label{continuousett}

In this section, we propose rewriting the previous theorems in terms of \emph{kernels} rather than matrices, i.e., we trade the discrete framework to a continuous one. \\

\begin{theorem}[Non-centered covariances, compact codomain]
    \label{functiongapineq}
    Let ${\cE}$ be a compact subset of $\R$ and $\mu$ an arbitrary positive finite measure on $\X^2$. A function $\rho \in L^2(\X^2,\mu)$ is the non-centered covariance of a random field on $\X$ with values in ${\cE}$ if, and only if, it fulfills the following conditions:
    \begin{enumerate}
        \item Symmetry: $\rho(x,y)=\rho(y,x)$ for any $x,y \in \X$.
        \item Gap inequalities: for any function $\lambda \in L^2(\X^2,\mu)$, one has 
        \begin{equation}
        \label{functiongap}
            \langle \lambda, \rho \rangle_{\mu}=\int_{\X^2} \lambda(x,y) \, \rho(x,y) \, {\rm d}\mu(x,y) \geq \gamma(\lambda,{\cE},\mu),
        \end{equation}
        where $\gamma(\lambda,{\cE},\mu)$ is the $\gamma$-gap of $\lambda$ on ${\cE}$ as per Definition \ref{defgap10}.
    \end{enumerate}
    Furthermore, the claim of the Theorem holds true if one restricts $\lambda$ and $\mu$ to be symmetric.
\end{theorem}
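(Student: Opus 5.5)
The plan is to reduce the statement to Theorem \ref{matrixgapineq}: the realization-wise definition of the $\gamma$-gap gives necessity, and a discrete choice of $\mu$ gives sufficiency. Throughout I read the statement with the measure $\mu$ ranging over all positive finite measures on $\X^2$. For a single fixed $\mu$ the sufficiency cannot hold: with $\mu=0$, every $\rho$ satisfies the inequalities. So sufficiency will be proved by exploiting the freedom in $\mu$.

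\emph{Necessity.} Let $Z$ be a random field valued in the compact set $\cE$, with non-centered covariance $\rho$. Symmetry is immediate. Compactness gives a constant $M$ with $|Z(x,\omega)|\le M$ everywhere. Hence $\varphi_{Z(\cdot,\omega)}$ is bounded by $M^2$ and, $\mu$ being finite, has finite norm $\|\cdot\|_\mu$. Therefore every realization $z=Z(\cdot,\omega)$ is admissible in Definition \ref{defgap10}, and
\begin{equation*}
\int_{\X^2}\lambda(x,y)\,Z(x,\omega)Z(y,\omega)\,{\rm d}\mu(x,y)\ \ge\ \gamma(\lambda,\cE,\mu)\quad\text{for every }\omega .
\end{equation*}
Since $\mu$ is finite, $\lambda\in L^2(\X^2,\mu)\subset L^1(\X^2,\mu)$. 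The integrand is dominated by $M^2|\lambda|$, so Fubini's theorem applies: integrating in $\P({\rm d}\omega)$ and exchanging integrals yields $\langle\lambda,\rho\rangle_\mu\ge\gamma(\lambda,\cE,\mu)$. If $\gamma=-\infty$ there is nothing to prove.

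\emph{Sufficiency.} Fix $n$, points $x_1,\ldots,x_n$ and a symmetric real matrix $\bLambda$. I want the discrete inequality (\ref{matrixgap}); Theorem \ref{matrixgapineq}, restricted to symmetric $\bLambda$, then provides the random field, since $\cE$ is closed. Merging repeated points beforehand, I may assume the $x_k$ are distinct: the corresponding rows and columns of $\bLambda$ are summed, and both sides of (\ref{matrixgap}) are unchanged. Take the symmetric finite measure
\begin{equation*}
\mu=\sum_{k,\ell}\delta_{(x_k,x_\ell)}
\end{equation*}
and the symmetric function $\lambda$ defined by $\lambda(x_k,x_\ell)=\lambda_{k\ell}$ and $0$ elsewhere. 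Every function on $\X^2$ lies in $L^2(\X^2,\mu)$, so both $\rho$ and $\lambda$ belong to it. Moreover $\langle\lambda,\rho\rangle_\mu=\langle\bLambda,\bR\rangle$. For any $z\in\cE^\X$, $\int\lambda\,\varphi_z\,{\rm d}\mu=\bz\bLambda\bz^\top$ with $\bz=(z(x_1),\ldots,z(x_n))$, and every $\bz\in\cE^n$ arises this way by extending it arbitrarily off $\{x_k\}$. Hence $\gamma(\lambda,\cE,\mu)=\gamma(\bLambda,\cE)$, and (\ref{functiongap}) becomes exactly (\ref{matrixgap}). Because only symmetric $\lambda$ and $\mu$ were used, this also proves the final claim of the theorem.

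\emph{Main obstacle.} The delicate step is the use of Fubini in the necessity part. It requires joint measurability of $(x,y,\omega)\mapsto\lambda(x,y)Z(x,\omega)Z(y,\omega)$, which is not automatic for an arbitrary index set $\X$ equipped only with a measure $\mu$. I would first try to handle it by working with a measurable version of $Z$. If no such version is available, I would fall back on approximating $\lambda$ by simple functions on $\mu$-measurable rectangles; this is justified by the boundedness $M^2$ together with dominated convergence. For simple $\lambda$ the exchange of integrals reduces to finite sums, where it is elementary.
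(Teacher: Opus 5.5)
\textbf{This proves a different statement from the one given.} In the theorem, $\mu$ is one fixed positive finite measure; the hypothesis $\rho\in L^2(\X^2,\mu)$ refers to it. The claim is that the gap inequalities for all $\lambda\in L^2(\X^2,\mu)$, with that $\mu$ held fixed, characterize non-centered covariances. The remark following the theorem presents exactly this as its advantage: a single measure replaces the quantification over all $n$ and all $x_1,\ldots,x_n$ in Theorem~\ref{matrixgapineq}.

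Your sufficiency step instead chooses $\mu=\sum_{k,\ell}\delta_{(x_k,x_\ell)}$ anew for each finite configuration. That same remark declares this Dirac-type measure non-positive. What you obtain is Theorem~\ref{matrixgapineq} rewritten with atomic measures. Your reason for changing the quantifier also fails: $\mu=0$ is not positive in the paper's sense, since the paper distinguishes positive from merely non-negative measures.

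Your worry does have a legitimate core. For an atomless $\mu$, such as Lebesgue measure on $[0,1]^2$, no gap inequality detects a change of $\rho$ at a single diagonal point (for instance, making $\rho(x_0,x_0)$ negative). So for such $\mu$ the conclusion can only be meant up to $\mu$-null sets. But that calls for an argument at fixed $\mu$, not for letting $\mu$ vary.

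Such an argument is what the paper gives. In the Hilbert space $L^2(\X^2,\mu)$ it takes the closed convex hull $\mathcal{H}$ of $\{\varphi_z: z\in\cE^\X\}$. If $\rho\notin\mathcal{H}$, Hahn--Banach separation yields a $\lambda$ with $\langle\lambda,\rho\rangle_\mu<\gamma(\lambda,\cE,\mu)$. If $\rho\in\mathcal{H}$, compactness of $\cE^\X$ (Tychonoff) together with Choquet's and Milman's theorems writes $\rho=\int\varphi_z\,\P({\rm d}z)$ for a probability measure $\P$ on $\cE^\X$. That $\P$ is the law of the required field.

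\textbf{The necessity repair does not work.} Your necessity argument is the paper's: integrate the realization-wise bound that defines the gap. You are right that this hides an exchange of $\int{\rm d}\mu$ and $\E$, a point the paper does not discuss. For $\lambda=\mathsf{1}_{A\times B}$, the quantity $\int_{A\times B}Z(x,\omega)\,Z(y,\omega)\,{\rm d}\mu(x,y)$ is still a $\mu$-integral of a realization, not a finite sum. So joint measurability in $(x,\omega)$ is needed exactly as before; only atomic $\mu$ turn it into a finite sum.

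Nor can one always pass to a measurable version. A field of i.i.d.\ signs on $[0,1]$ has none. Without one, $\omega\mapsto\int\lambda\,\varphi_{Z(\cdot,\omega)}\,{\rm d}\mu$ is not even a well-defined random variable, so in such cases the inequality must be obtained by a different route.
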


\bigskip

\begin{remark}
    The gap inequality (\ref{functiongap}) boils down to inequality (\ref{matrixgap}) when $\mu(x,y)$ is the product of the two Dirac measures $\delta_{x}(x_1,\ldots,x_n)$ and $\delta_{y}(x_1,\ldots,x_n)$. However, the counterpart of choosing Dirac measures (which vanish on $\X \smallsetminus \{x_1,\ldots,x_n\}$ and are therefore not positive, but only non-negative) is the need to state the gap inequalities not only for any real square matrix $\bLambda$, but also for any choice of the matrix size ($n$) and of the supporting points $x_1,\ldots,x_n$ in $\X$.    
    In this respect, an advantage of Theorem \ref{functiongapineq} is to replace the discrete formulation of Theorem \ref{matrixgapineq} involving all possible integers $n$ and points $x_1,\ldots,x_n$ in $\X$ by a formulation involving a single positive measure $\mu$ on $\X^2$.\\    
\end{remark}

\bigskip

Mutatis mutandis, Theorem \ref{functiongapineq} can be extended to semivariograms, higher-order spatial moments, and to the multivariate setting, as follows (the proofs are omitted).\\

\begin{theorem}[Semivariograms, compact codomain]
    \label{variogrfunc}
    Let ${\cE}$ be a compact subset of $\R$ and $\mu$ an arbitrary positive finite measure on $\X^2$. A mapping $g \in L^2(\X^2,\mu)$ is the semivariogram of a random field on $\X$ with no drift and with values in ${\cE}$ if, and only if, it fulfills the following conditions:
    \begin{enumerate}
        \item Symmetry: $g(x,y)=g(y,x)$ for any $x, y \in \X$.
        \item Gap inequalities: for any function $\lambda \in L^2(\X^2,\mu)$, one has 
        \begin{equation}
        \label{functiongapvariog}
            \langle \lambda, g \rangle_{\mu}=\int_{\X^2} \lambda(x,y) \, g(x,y) \, {\rm d}\mu(x,y) \leq \eta(\lambda,{\cE},\mu),
        \end{equation}
        where $\eta(\lambda,{\cE},\mu)$ is the $\eta$-gap of $\lambda$ on $\cE$ as per Definition \ref{defgap11}.
    \end{enumerate}
    The function $\lambda$ can be restricted to be zero on the diagonal of $\X \times \X$ provided that the extra condition that the same restriction holds for $g$.\\
\end{theorem}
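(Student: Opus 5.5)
The plan is to derive Theorem \ref{variogrfunc} from Theorem \ref{functiongapineq}, in the same way that the discrete variogram result (Theorem \ref{variogr}) follows from Theorem \ref{matrixgapineq} through Lemma \ref{gamma2eta}. I would prove necessity directly and sufficiency by a Hahn--Banach (separation) argument in $L^2(\X^2,\mu)$.

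\emph{Necessity.} Let $Z$ be a random field with values in the compact set $\cE$, no drift, and semivariogram $g$. Symmetry is immediate. Because $\cE$ is bounded, every realization $z=Z(\cdot,\omega)$ has $\psi_z$ bounded, so $\|\psi_z\|_\mu<+\infty$ since $\mu$ is finite. For $\lambda\in L^2(\X^2,\mu)$, Fubini applies, since $\lambda\psi_Z$ is integrable with respect to $\mu\otimes\P$ by Cauchy--Schwarz and boundedness. This gives
\begin{equation*}
\langle\lambda,g\rangle_\mu=\E\Big(\tfrac12\int_{\X^2}\lambda(x,y)\,[Z(x)-Z(y)]^2\,{\rm d}\mu(x,y)\Big)\le\eta(\lambda,\cE,\mu),
\end{equation*}
because each realization gives a value bounded by the supremum defining $\eta$. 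Measurability of $(x,\omega)\mapsto Z(x,\omega)$ has to be assumed or handled as in the proof of Theorem \ref{functiongapineq}.

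\emph{Sufficiency.} Let $K\subset L^2(\X^2,\mu)$ be the closed convex hull of $\{\tfrac12\psi_z : z\in\cE^\X\}$. By definition, the support function of this set is $\sup_{h\in K}\langle\lambda,h\rangle_\mu=\eta(\lambda,\cE,\mu)$. If $g$ satisfies the gap inequalities for every $\lambda$, then $g\in K$; otherwise Hahn--Banach separation would give a $\lambda$ with $\langle\lambda,g\rangle_\mu>\eta(\lambda,\cE,\mu)$. It remains to show that every element of $K$ is $\mu$-a.e.\ the semivariogram of an $\cE$-valued field, and then to upgrade this to everywhere equality.

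\emph{Representing elements of $K$ as semivariograms.} Convex combinations of the $\tfrac12\psi_z$ are semivariograms of fields that take finitely many deterministic realizations. Such a field has no drift only after symmetrization, and since $\check\cE\neq\cE$ in general this is handled as follows: use $\E(Z(x)-Z(y))=0$ only in the sense that $g$ is defined via the second moment of the increments, as in the definition adopted in Section \ref{variograms}. Limits in $L^2$ are treated with compactness. The set $\cE^\X$ is compact in the product topology (Tychonoff), so probability measures on it form a weak-$*$ compact set, using the Riesz representation theorem together with the Daniell--Kolmogorov extension theorem. For $\lambda$ in $L^2$, the map $P\mapsto\int\langle\lambda,\tfrac12\psi_z\rangle_\mu\,P({\rm d}z)$ should be continuous enough that a limit measure represents the limit function. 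This yields a $\P$ on $\cE^\X$ with $\int\tfrac12\psi_z\,\P({\rm d}z)=g$ in $L^2(\X^2,\mu)$.

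\emph{Main obstacle.} The main difficulty is this last step: passing from $\mu$-a.e.\ (weak) identification to pointwise identification, and justifying continuity of $z\mapsto\langle\lambda,\psi_z\rangle_\mu$ on $\cE^\X$, which fails for general $\lambda$. I would first prove the identity for $\lambda$ continuous and simple (finite linear combinations of indicators of rectangles) and then extend by density and dominated convergence, using that $\psi_z\le(\operatorname{diam}\cE)^2$. I would rely on the proof of Theorem \ref{functiongapineq} for the measure-theoretic details and simply substitute $\psi_z$ for $\varphi_z$. The remark about $\lambda$ vanishing on the diagonal follows because diagonal values of $\psi_z$ are $0$, exactly as in Theorem \ref{variogr2}.
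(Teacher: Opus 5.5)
Your route is the one the paper intends. It omits this proof as a ``mutatis mutandis'' version of Theorem~\ref{functiongapineq}: necessity by taking expectations, and sufficiency by separating $g$ from the closed convex hull of $\{\tfrac12\psi_z\}$ in $L^2(\X^2,\mu)$, then writing hull elements as mixtures over the compact space $\cE^\X$. Your weak-$*$ compactness of probability measures simply replaces the paper's Choquet--Milman step. However, the two points you flag in the sufficiency part are genuine gaps, and neither can be closed for the statement as written.

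\emph{No drift.} Declaring that $\E(Z(x)-Z(y))=0$ holds ``only in the sense that $g$ is defined via the second moment'' just drops the requirement. The mixing law $\P$ gives $\tfrac12\E[(Z(x)-Z(y))^2]=g(x,y)$, but $\E Z(x)$ need not be constant. If $\cE$ is symmetric about some point $c$ (not necessarily the origin), i.e.\ $2c-\cE=\cE$, you can fix this: replace $Z$ by $c+\epsilon(Z-c)$ with an independent fair random sign $\epsilon$. The new field is $\cE$-valued, has the same squared increments, and its increments have zero mean. Without such symmetry the claim is false. Take $\cE=\{0,1,3\}$, $\X=\{x_1,x_2,x_3\}$, $\mu$ the counting measure, and $g=\tfrac12\psi_z$ with $z=(0,1,3)$. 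Every gap inequality holds by definition of $\eta$. Yet $g(x_1,x_3)=\tfrac92$ forces $\{Z(x_1),Z(x_3)\}=\{0,3\}$ almost surely. Then $g(x_1,x_2)+g(x_2,x_3)=\tfrac52$ forces $Z(x_2)=1$, and hence $Z=(0,1,3)$ almost surely, which is a field with drift.

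\emph{Pointwise versus $\mu$-a.e.} No argument through the pairings $\langle\lambda,\cdot\rangle_\mu$ (density, dominated convergence or otherwise) can recover $g$ off a $\mu$-null set, because the gap inequalities only see the $\mu$-class of $g$. For example, let $\X=[0,1]$ and let $\mu$ be Lebesgue measure on $[0,1]^2$. The symmetric function that vanishes except at $(0,1)$ and $(1,0)$, where it exceeds $\tfrac12\,\mathrm{diam}(\cE)^2$, passes every gap inequality, since its pairings vanish and $\eta\geq 0$. But it is not the semivariogram of any $\cE$-valued field. So the conclusion must be read $\mu$-a.e., unless $\mu$ charges every point of $\X^2$ (which forces $\X$ to be countable). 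The paper's pointwise representation (\ref{Pn2}), and its compactness claim for ${\cal P}$, gloss over the same issue.

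In the a.e.\ reading, your continuity worry disappears if you use the right functionals. For fixed $(x,y)$, the map $z\mapsto\tfrac12(z(x)-z(y))^2$ depends on only two coordinates, so it is continuous on $\cE^\X$. Now take finite mixtures $h_n\to h$ in $L^2$ with mixing laws $P_n$. Pass to a subsequence converging $\mu$-a.e., then to a weak-$*$ convergent subnet $P_\alpha\to\P$. Evaluating at each $(x,y)$ gives $\int\tfrac12(z(x)-z(y))^2\,\P({\rm d}z)=h(x,y)$ for $\mu$-a.e.\ $(x,y)$. Continuity of $z\mapsto\langle\lambda,\psi_z\rangle_\mu$ is never needed. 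That continuity does fail in general, and ``continuous $\lambda$'' has no meaning on an abstract $\X$ anyway.
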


\begin{theorem}[High-order spatial moments, compact codomain]
    \label{functiongapineq2}
    Let ${\cE}$ be a compact subset of $\R$, $q$ an integer greater than $1$, and $\mu$ an arbitrary positive finite measure on $\X^q$. A function $\rho_q \in L^2(\X^q,\mu)$ is the $q$-th spatial moment of a random field on $\X$ with values in ${\cE}$ if, and only if, it fulfills the following conditions:
    \begin{enumerate}
        \item Symmetry: $\rho_q(x_1,x_2,\ldots,x_q)=\rho_q(x_{\sigma_1},x_{\sigma_2},\ldots,x_{\sigma_q})$ for any set of points $x_1, \ldots, x_q \in \X$ and any permutation $\{\sigma_1, \ldots, \sigma_q\}$ of $\{1,\ldots,q\}$.
        \item Gap inequalities: for any function $\lambda \in L^2(\X^q,\mu)$, one has 
        \begin{equation}
        \label{functiongap2}
            \int_{\X^q} \lambda(\bx) \, \rho_q(\bx) \, {\rm d}\mu(\bx) \geq \gamma(\lambda,{\cE},\mu),
        \end{equation}
        where $\bx = (x_1,\ldots,x_q)$ and 
        \begin{equation}
        \label{generalgap2}
        \gamma(\lambda,{\cE},\mu) = \inf \left\{ \int_{\X^q} \lambda(\bx) \, \varphi_z(\bx) \, {\rm d}\mu(\bx): z \in {\cE}^\X \text{ and } \varphi_z \in L^2(\X^q,\mu)\right\}.
    \end{equation}
    with $\varphi_z(\bx) = z(x_1) \times \ldots \times z(x_q)$.\\
    \end{enumerate}
\end{theorem}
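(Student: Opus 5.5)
The plan is to adapt the proof scheme of Theorem \ref{functiongapineq} to $q$-dimensional arrays. We first reduce the continuous statement to the discrete one of Theorem \ref{highorder}, and then use the compactness of $\cE$ to pass between the two formulations.

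\emph{Necessity.} Let $\rho_q(\bx)=\E(Z(x_1)\cdots Z(x_q))$, where $Z$ takes values in the compact set $\cE$. Symmetry follows from the commutativity of the product. Since $\cE$ is bounded, say $\cE\subset[-M,M]$, we have $|\varphi_z|\le M^q$ for every $z\in\cE^\X$, so $\varphi_z\in L^2(\X^q,\mu)$ for every realization, because $\mu$ is finite. Fubini's theorem then gives
\begin{equation*}
\int_{\X^q}\lambda\,\rho_q\,{\rm d}\mu=\E\Big(\int_{\X^q}\lambda(\bx)\,\varphi_{Z(\cdot,\omega)}(\bx)\,{\rm d}\mu(\bx)\Big).
\end{equation*}
The integrand on the right is bounded below by $\gamma(\lambda,\cE,\mu)$ for every $\omega$. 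Integrability is ensured by the bound $|\lambda\varphi_z|\le M^q|\lambda|$ together with $\lambda\in L^1(\mu)$, which holds because $\mu$ is finite. A joint measurability issue in $(\bx,\omega)$ remains; we handle it by noting that $\omega\mapsto \omega(\bx)$ is measurable on the product $\sigma$-algebra.

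\emph{Sufficiency.} The goal is to verify the discrete gap inequalities of Theorem \ref{highorder} at every finite set of points $x_1,\ldots,x_n$ and every array $\bLambda$, and then invoke that theorem. Compact sets are closed, so it applies. Given $\bLambda$, we choose $\lambda$ concentrated near the tuples $(x_{k_1},\ldots,x_{k_q})$, for instance $\lambda=\sum\lambda_{k_1\ldots k_q}\,\mathbbm{1}_{\{(x_{k_1},\ldots,x_{k_q})\}}/\mu(\{(x_{k_1},\ldots,x_{k_q})\})$ when these atoms have positive mass. The continuous inequality then becomes exactly the discrete one, because $\gamma(\lambda,\cE,\mu)$ only involves $z$ at $x_1,\ldots,x_n$ and so reduces to $\gamma(\bLambda,\cE)$.

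\textbf{Main obstacle.} We expect the hardest step to be an arbitrary $\mu$ that has no atoms at the chosen points. There, pointwise values of $\rho_q$ are not controlled by integrals against $\mu$, so the reduction to Theorem \ref{highorder} breaks down. We would therefore drop it and use a Hahn–Banach or duality argument in $L^2(\X^q,\mu)$ instead. Let $C$ be the closed convex hull of $\{\varphi_z: z\in\cE^\X\}$. This set is bounded, since $\|\varphi_z\|_\mu\le M^q\mu(\X^q)^{1/2}$, and hence weakly compact. The gap inequalities say that every continuous linear functional $\langle\lambda,\cdot\rangle_\mu$ satisfies $\langle\lambda,\rho_q\rangle_\mu\ge\inf_C\langle\lambda,\cdot\rangle_\mu$, so separation gives $\rho_q\in C$.

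The remaining task is to write $\rho_q$ as a barycenter $\int\varphi_z\,\P({\rm d}z)$ of a probability measure on $\cE^\X$. We would first take finite convex combinations, which are the moments of discrete random fields. We would then pass to a weak limit of the corresponding laws on the compact product space $\cE^\X$ (Tychonoff, with Prokhorov-type tightness provided automatically by compactness). Finally, we need to check that the $L^2$ limit of the moment kernels coincides with the moment of the limiting law. This holds $\mu$-almost everywhere, because $z\mapsto\varphi_z(\bx)$ is continuous and bounded on $\cE^\X$. The statement therefore identifies $\rho_q$ only up to $\mu$-null sets, and we would use the symmetry condition to choose a symmetric representative.
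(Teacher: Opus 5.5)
\paragraph{Sufficiency: same architecture, different final step.}
Your sufficiency argument follows the paper's proof of Theorem~\ref{functiongapineq}, adapted to order $q$. Both take the closed convex hull of the $\varphi_z$ in $L^2(\X^q,\mu)$, apply Hahn--Banach separation, and then give a barycentric representation. You were right to drop the reduction through atoms.

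The difference is the last step. The paper asserts compactness of ${\cal P}=\{\varphi_z\}$ via Tychonoff and then invokes Choquet and Milman. You instead take finitely supported laws on $\cE^\X$, pass to a limit law, and identify moments through the evaluations $z\mapsto\varphi_z(\bx)$. Your route is the sounder one. The map $z\mapsto\varphi_z$ is not continuous from the product topology into $L^2(\X^q,\mu)$, even weakly: for $\mu=\varpi^{\otimes q}$ with $\varpi$ non-atomic, modifying a fixed $z$ on finite sets gives a net converging pointwise to any $z'$ while $\varphi_z$ does not move in $L^2$. Each evaluation, by contrast, is product-continuous.

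Two points still need fixing.
\begin{itemize}
\item $\cE^\X$ is not metrizable for uncountable $\X$, so compactness only yields a convergent \emph{subnet} of laws. First extract a subsequence of your norm-convergent convex combinations that converges to $\rho_q$ $\mu$-a.e. Then take a weak-$*$ convergent subnet of the corresponding laws, and compare the two limits at each point of that full-measure set.
\item The conclusion holds only $\mu$-a.e., and no choice of representative changes that. The gap inequalities do not see $\mu$-null sets. For $\mu=\varpi^{\otimes q}$ with $\varpi$ non-atomic, redefining a valid $\rho_q$ on the diagonal of $\X^q$ so that it exceeds $\max_{e\in\cE}|e|^q$ preserves both conditions. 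So sufficiency can only hold in $L^2(\mu)$, a point the paper also passes over.
\end{itemize}

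\paragraph{Necessity: a genuine gap.}
A realization $z=Z(\cdot,\omega)$ enters the infimum defining $\gamma(\lambda,\cE,\mu)$ only if $\varphi_z\in L^2(\X^q,\mu)$. That requires measurability, not just the boundedness you cite. Fubini also needs joint measurability of $(\bx,\omega)\mapsto Z(x_1,\omega)\cdots Z(x_q,\omega)$, which does not follow from measurability of each $\omega\mapsto\omega(\bx)$.

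This cannot be patched at this level of generality. Take the following setting:
\begin{itemize}
\item $q=2$ and $\cE=\{-1,1\}$;
\item $\X$ the free part of the Bernoulli shift of $\langle a,b\rangle$, with product measure $\varpi$;
\item $\mu=\varpi\otimes\varpi+\mu_a+\mu_b$, where $\mu_g$ is the image of $\varpi$ under $x\mapsto(x,gx)$.
\end{itemize}
The $\varpi\otimes\varpi$ term forces every admissible $z$ to be $\varpi$-measurable; look at the sections of $\{\varphi_z=1\}$. Strong ergodicity of the shift restricted to the index-two subgroup of even words then keeps $\int z(x)z(ax)\,{\rm d}\varpi(x)+\int z(x)z(bx)\,{\rm d}\varpi(x)$ bounded away from $-2$.

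Now take a proper $2$-colouring of each orbit (a $4$-regular tree) and multiply it by an independent fair sign per orbit. This gives a $\{-1,1\}$-valued field whose covariance is Borel and equals $-1$ on both graphs. Hence $\lambda=\mathsf{1}$ on the union of the two graphs violates the gap inequality.

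The paper's proof has the same hole: it asserts that every $\varphi_z$ lies in $L^2$ and integrates over $z$. Assuming a jointly measurable version of the field would restore necessity, but your sufficiency construction need not produce one. Indeed, for Lebesgue $\mu$ on $[0,1]^2$ and $\cE=\{-1,1\}$, the class $\rho_2=0$ passes every gap test, yet it is the covariance of no jointly measurable field. So the ``only if'' part of the statement itself, not only its proof, needs amending.
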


\begin{theorem}[Matrix-valued covariances, compact codomain]
    \label{functiongapineq3}
    Let ${\cE}$ be a compact subset of $\R^p$ and $\mu$ an arbitrary positive finite measure on $\X^2$. A matrix-valued function $\brho = [\rho_{ij}]_{i,j=1}^p$ with $\rho_{ij} \in L^2(\X^2,\mu)$ is the non-centered covariance of a $p$-variate random field on $\X$ with values in ${\cE}$ if, and only if, it fulfills the following conditions:
    \begin{enumerate}
        \item Symmetry: $\brho(x,y)=\brho^\top(y,x)$ for any $x,y \in \X$.
        \item Gap inequalities: for any matrix-valued function $\blambda = [\lambda_{ij}]_{i,j=1}^p$ with $\lambda_{ij} \in L^2(\X^2,\mu)$, one has 
        \begin{equation}
        \label{functiongap3}
            \langle \blambda, \brho \rangle_{\mu} := \int_{\X^2} \text{tr}\left(\blambda(x,y) \, \brho(y,x)\right) \, {\rm d}\mu(x,y) \geq \gamma(\blambda,{\cE},\mu),
        \end{equation}
        where $\gamma(\blambda,{\cE},\mu) = \inf \left\{ \int_{\X^2} \text{tr}\left(\bz(x) \, \blambda(x,y) \, \bz^\top(y) \right) {\rm d}\mu(x,y): \bz \in {\cE}^\X \right\}$.\\
    \end{enumerate}
\end{theorem}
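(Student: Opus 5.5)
Necessity is immediate, and sufficiency follows the route of Theorem \ref{functiongapineq}: I separate $\brho$ from a compact convex set of achievable covariances by Hahn--Banach and read off the separating functional as a matrix-valued test function $\blambda$.

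\textbf{Necessity.} Let $\bZ$ be a $p$-variate random field with values in the compact set ${\cE}$ and non-centered covariance $\brho$. Symmetry follows from $\brho(x,y)=\E(\bZ^\top(x)\bZ(y))$. Boundedness of ${\cE}$ keeps every integral finite, so Fubini applies. Writing $\brho(y,x)=\E(\bZ^\top(y)\bZ(x))$, I obtain
\[
\langle \blambda,\brho\rangle_\mu=\E\Big(\int_{\X^2}\text{tr}\big(\bZ(x)\,\blambda(x,y)\,\bZ^\top(y)\big)\,{\rm d}\mu(x,y)\Big).
\]
Each realization $\bZ(\cdot,\omega)$ lies in ${\cE}^\X$, so the integrand of the expectation is at least $\gamma(\blambda,{\cE},\mu)$, and taking the expectation gives the gap inequality. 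A measurability point arises here: realizations need not be measurable in $x$. I would handle it exactly as in the proof of Theorem \ref{functiongapineq}, assuming joint measurability of $\bZ$ or working with a measurable version.

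\textbf{Sufficiency.} Let $H=L^2(\X^2,\mu)^{p\times p}$ with the pairing $\langle\cdot,\cdot\rangle_\mu$. Define $K\subset H$ as the set of non-centered covariances of ${\cE}$-valued $p$-variate fields that are (jointly measurable) mixtures of deterministic fields $\bz\in{\cE}^\X$. Equivalently, $K$ is the closed convex hull of the rank-one kernels $(x,y)\mapsto \bz^\top(x)\bz(y)$ with $\bz\in{\cE}^\X$.

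The key facts about $K$ are the following.
\begin{itemize}
\item $K$ is convex, since mixing two fields mixes their covariances.
\item $K$ is bounded in $H$, because $\|\bz^\top(x)\bz(y)\|\le \sup_{{\cE}}\|\cdot\|^2$ and $\mu$ is finite.
\item $K$ is weakly closed, hence weakly compact.
\end{itemize}
Weak closedness is obtained as in Theorem \ref{functiongapineq}. Given a weakly convergent sequence of covariances in $K$, the laws on ${\cE}^\X$ (compact by Tychonoff) have a weak* convergent subnet by Prokhorov/Riesz--Markov. The finite-dimensional moments converge because the functions $\bz\mapsto \bz^\top(x)\bz(y)$ are continuous on ${\cE}^\X$. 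Dominated convergence then identifies the limit covariance $\mu$-a.e. with the weak limit.

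Now suppose $\brho\notin K$. Hahn--Banach in the Hilbert space $H$ gives $\blambda\in H$ with
\[
\langle\blambda,\brho\rangle_\mu<\inf_{\bC\in K}\langle\blambda,\bC\rangle_\mu .
\]
Since $K$ contains all rank-one kernels built from $\bz\in{\cE}^\X$, the right-hand side is at most $\gamma(\blambda,{\cE},\mu)$, which contradicts the gap inequality. Hence $\brho\in K$, meaning $\brho$ coincides $\mu$-a.e. with the covariance of an ${\cE}$-valued field. Symmetry of $\brho$ is used to take $\blambda$ in the symmetric part, since the antisymmetric part pairs trivially with both $\brho$ and $K$.

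\textbf{Main obstacle.} The hard step is passing from ``$\mu$-a.e.'' to pointwise equality and making the compactness argument rigorous. First, the law must live on ${\cE}^\X$ with the product $\sigma$-algebra, while the $\gamma$-gap uses $\mu$-integrals of possibly non-measurable $\bz$. Second, $\brho$ as an element of $L^2$ is only defined up to null sets. I would follow the authors' treatment in Theorem \ref{functiongapineq}. Concretely, the vector case would be reduced to the scalar one by viewing $\bZ$ as a univariate field on $\X\times\{1,\ldots,p\}$ with measure $\mu\otimes(\text{counting})$. The exception is when ${\cE}\subset\R^p$ is not a product set; there the same Tychonoff/Prokhorov argument applies with ${\cE}^\X$ directly as the compact sample space, since only compactness of ${\cE}$ is used.
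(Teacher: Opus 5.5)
Your route is the one the paper intends (it omits this proof and points to that of Theorem~\ref{functiongapineq}): closed convex hull of the kernels $\bz^\top(x)\bz(y)$, Tychonoff compactness of $\cE^\X$, Hahn--Banach separation, and reduction to the symmetric part. Where the paper invokes Choquet and Milman, you use weak$^*$ limits of laws on $\cE^\X$, and that step fails as written. Three things go wrong: dominated convergence is not available along subnets; $\cE^\X$ is in general not sequentially compact; and $\bz\mapsto\int_{\X^2}\text{tr}(\bz(x)\blambda(x,y)\bz^\top(y))\,{\rm d}\mu(x,y)$ is not continuous for the product topology. So a weak$^*$ cluster point of the laws need not reproduce the weak $L^2$ limit. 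Take $p=1$, $\X=[0,1]$, $\mu$ Lebesgue measure on $[0,1]^2$, $\cE=\{-1,1\}$ and the Rademacher functions $r_n$. Then $r_n\otimes r_n\rightharpoonup 0$. But Dirac masses form a weak$^*$-closed set, so every cluster point of $(\delta_{r_n})$ is some $\delta_{z^*}$, whose covariance $z^*(x)z^*(y)$ has modulus $1$ everywhere. The same example shows that your two descriptions of $K$ are not equivalent. The kernel $0$ lies in the closed convex hull, being the norm limit of $N^{-1}\sum_{n\le N} r_n\otimes r_n$. Yet no jointly measurable $\{-1,1\}$-valued field has covariance $0$ a.e.: Fubini would give $\E[(\int_0^1 fZ\,{\rm d}x)^2]=0$ for all $f\in L^2[0,1]$, hence $Z(\cdot,\omega)=0$ a.e.\ for almost every $\omega$. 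The repair is to let $K$ be the norm-closed convex hull; it is weakly closed by Mazur, and strict separation needs no compactness. For $\boldsymbol{C}\in K$, take finite mixtures $\boldsymbol{C}_n\to\boldsymbol{C}$ in norm, extract a $\mu$-a.e.\ convergent subsequence, and let $P$ be a weak$^*$ cluster point of their laws. Since $Q\mapsto\int\bz^\top(x)\bz(y)\,Q({\rm d}\bz)$ is weak$^*$-continuous for each fixed $(x,y)$, the covariance of $P$ equals $\boldsymbol{C}(x,y)$ wherever the subsequence converges.

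This gives only $\mu$-a.e.\ equality, and deferring to ``the authors' treatment'' cannot upgrade it. The paper's proof of Theorem~\ref{functiongapineq} obtains the pointwise representation \eqref{Pn2} from a compactness of ${\cal P}$ attributed to Tychonoff, whereas in the example ${\cal P}$ is not even weakly closed ($r_n\otimes r_n\rightharpoonup 0\notin{\cal P}$). No argument can do better, because the gap inequalities see $\brho$ only through $\mu$. In the example, the kernel equal to $1$ off the diagonal and to $5$ on it satisfies every gap inequality, since it coincides $\mu$-a.e.\ with the covariance of the constant field $z\equiv 1$. Yet no $\{-1,1\}$-valued field has $\rho(x,x)=5$. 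So what your argument can establish is the $\mu$-a.e.\ version, which is pointwise only when $\mu$ charges every point of $\X^2$ (forcing $\X$ to be countable). Likewise, the fields produced by sufficiency are in general not jointly measurable; a field realizing the element $0$ of $K$ in the example cannot be, by the Fubini argument above. Your necessity proof assumes a measurable version, and such a version need not exist, so it does not cover these fields.
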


The multivariate case can also be dealt with by viewing a $p$-variate random field on $\X$ as a univariate random field on $\X \times \{1,\ldots,p\}$, which amounts to replacing $\X$ by $\X \times \{1,\ldots,p\}$ in Theorem \ref{functiongapineq}. This alternative assumes that all the field components are valued in the same compact $\cE$, hence it is less flexible than Theorem \ref{functiongapineq3}.

\bigskip

The case of non-compact codomains is more complicated to deal with. A clean treatment needs additional assumptions on the set $\X$ (to be a metric space), on the class of admissible covariance functions (to be continuous), and on the measure $\mu$ (to be a product measure), as indicated in the following theorem for the case when $\cE = \R$, $\R_{\geq 0}$ or $\R_{\leq 0}$.\\

\begin{theorem}[Non-centered covariances, unbounded codomains]
    \label{functiongapineqRplus}
    Let $\X$ be a metric space, $\varpi$ a positive finite measure on $\X$, $\mu = \varpi \times \varpi$ the corresponding product measure on $\X^2$, and $\cE = \R$, $\R_{\geq 0}$ or $\R_{\leq 0}$. A continuous function $\rho: \X \times \X \to \R$ is the non-centered covariance of a random field on $\X$ with values in $\cE$ if, and only if, it fulfills the following conditions:
    \begin{enumerate}
        \item Symmetry: $\rho(x,y)=\rho(y,x)$ for any $x,y \in \X$.
        \item Gap inequalities: for any symmetric continuous function $\lambda: \X \times \X \to \R$, one has 
        \begin{equation}
        \label{functiongapRplus}
            \langle \lambda,\rho\rangle_{\mu}=\int_{\X^2} \lambda(x,y) \, \rho(x,y) \, {\rm d}\mu(x,y) \geq \gamma(\lambda,\cE,\mu).
        \end{equation}
    \end{enumerate}
\end{theorem}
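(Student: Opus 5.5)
The plan is to reduce Theorem \ref{functiongapineqRplus} to the discrete characterization of Theorem \ref{matrixgapineq}. For the codomains at hand that characterization is explicit. For $\cE=\R$ it says $\rho$ must be symmetric positive semidefinite, by Corollary \ref{lem:gammaR}. For $\cE=\R_{\geq 0}$ (and $\R_{\leq 0}$, since $\gamma(\cdot,\check\cE)=\gamma(\cdot,\cE)$ and the symmetric codomain only flips signs of the field) it says every matrix $\bR=[\rho(x_k,x_\ell)]$ lies in the dual of the cone of matrices with $\gamma(\bLambda,\R_{\geq 0})=0$, by Lemma \ref{lem:negeig1}, i.e.\ each $\bR$ is completely positive. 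In the same spirit I would first show that $\gamma(\lambda,\cE,\mu)\in\{0,-\infty\}$ for any symmetric continuous $\lambda$. This holds because $z\mapsto az$ preserves $\cE$ for $a>0$, so the $\gamma$-gap is either $0$ (attained at $z=0$) or $-\infty$ by scaling. Hence the gap inequality (\ref{functiongapRplus}) says exactly $\langle\lambda,\rho\rangle_\mu\ge 0$ whenever $\int\lambda(x,y)z(x)z(y)\,{\rm d}\varpi(x)\,{\rm d}\varpi(y)\ge 0$ for all admissible $z\in\cE^\X$.

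\emph{Necessity.} Let $Z$ be the field, so that $\rho(x,y)=\E[Z(x)Z(y)]$. If $\lambda$ has nonnegative gap, apply Fubini to write
\[
\langle\lambda,\rho\rangle_\mu=\E\Big[\int\lambda(x,y)Z(x)Z(y)\,{\rm d}\mu\Big].
\]
The integrand inside the expectation is $\ge0$ almost surely, because each realization lies in $\cE^\X$. Fubini is justified since $\int\E Z(x)^2\,{\rm d}\varpi(x)=\int\rho(x,x)\,{\rm d}\varpi(x)$. I would assume $\rho(x,x)$ is bounded on the support, or argue that $\varphi_z\in L^2(\mu)$ almost surely, so that the realizations are admissible. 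Measurability of $Z$ in $x$ is a side issue; I would handle it with a separable version, which continuity of $\rho$ allows.

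\emph{Sufficiency.} This is the main step. Fix $x_1,\dots,x_n$ and a symmetric $\bLambda$ with $\gamma(\bLambda,\cE)=0$; I need $\langle\bLambda,\bR\rangle\ge0$. I would approximate Dirac masses by continuous kernels. Choose nonnegative continuous bumps $h_k^\varepsilon$ supported in balls $B(x_k,\varepsilon)$, normalized so that $\int h_k^\varepsilon\,{\rm d}\varpi=1$, and set
\[
\lambda_\varepsilon(x,y)=\sum_{k,\ell}\lambda_{k\ell}\,h_k^\varepsilon(x)\,h_\ell^\varepsilon(y).
\]
Then $\int\lambda_\varepsilon\, z\otimes z\,{\rm d}\mu=\bz_\varepsilon\bLambda\bz_\varepsilon^\top$, where $(\bz_\varepsilon)_k=\int h_k^\varepsilon z\,{\rm d}\varpi$. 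Each $(\bz_\varepsilon)_k$ lies in $\cE$ because $\cE$ is a convex cone (an interval closed under averaging), so the gap of $\lambda_\varepsilon$ is $\ge0$. Continuity of $\rho$ then gives $\langle\lambda_\varepsilon,\rho\rangle_\mu\to\langle\bLambda,\bR\rangle$, hence $\langle\bLambda,\bR\rangle\ge0$, and Theorem \ref{matrixgapineq} produces the field.

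The hardest point is that the balls must carry positive $\varpi$-mass. If some $x_k$ lies outside the support of $\varpi$, the gap inequalities carry no information there. I would therefore either read the statement with $\varpi$ of full support, or first obtain the discrete conditions on ${\rm supp}\,\varpi$ and extend them by continuity of $\rho$. If the points $x_k$ are not distinct, I would merge coincident ones, or simply rely on the fact that Theorem \ref{matrixgapineq} allows repeated points.
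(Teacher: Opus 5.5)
Your proof is correct, and it has the same skeleton as the paper's proof. Necessity comes from taking expectations in the pointwise bound $\int\lambda\,Z(x)Z(y)\,{\rm d}\mu\ge\gamma(\lambda,\cE,\mu)$. Sufficiency comes from transferring a discrete gap inequality of Theorem~\ref{matrixgapineq} to a test function concentrated near the pairs $(x_k,x_\ell)$, using continuity of $\rho$ and the fact that $\varpi$ charges every ball. Your full-support reading of $\varpi$ is the intended one: the paper calls a measure ``positive'' as opposed to a merely non-negative one such as a Dirac mass.

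The difference lies in the localizing test function. The paper argues by contradiction with a piecewise-constant $\lambda$ equal to $\lambda_{k\ell}$ on translates $O+(x_k,x_\ell)$ of a small ball ``centered at the origin''. It then asserts that $\sum_{k,\ell}\lambda_{k\ell}\,z(x+x_k)\,z(y+x_\ell)$ is pointwise non-negative by copositivity. That construction has three weaknesses:
\begin{itemize}
\item it needs translations and an origin, which a metric space does not have;
\item it yields a discontinuous $\lambda$, outside the stated class of test functions;
\item the pointwise claim fails, because for $x\neq y$ the sum is a bilinear form in two different non-negative vectors, which copositivity does not control. For example, $\bLambda=\bigl(\begin{smallmatrix}1&-1\\-1&1\end{smallmatrix}\bigr)$ with the vectors $(1,0)$ and $(0,1)$ gives $-1$. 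It only becomes a quadratic form after integrating over a product ball under a translation-invariant measure.
\end{itemize}
Your separable kernel $\lambda_\varepsilon=\sum_{k,\ell}\lambda_{k\ell}\,h_k^\varepsilon\otimes h_\ell^\varepsilon$ avoids all three problems:
\begin{itemize}
\item it is symmetric and continuous;
\item it uses only the metric, with the normalization $\int h_k^\varepsilon\,{\rm d}\varpi=1$ absorbing the varying local masses of $\varpi$;
\item it gives the exact identity $\int\lambda_\varepsilon\,z\otimes z\,{\rm d}\mu=\bz_\varepsilon\bLambda\bz_\varepsilon^\top$ with $\bz_\varepsilon\in\cE^n$.
\end{itemize}
This identity also makes clear that the result rests on $\cE$ being a closed convex cone, which is exactly why the theorem is restricted to $\R$, $\R_{\geq0}$ and $\R_{\leq0}$. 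The paper's route buys only brevity.

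The integrability and measurability caveats you raise in the necessity part are genuine: admissibility of the realizations, and Fubini with a possibly unbounded continuous $\lambda$. The paper's proof glosses over them as well (``the quantities on both sides may be infinite''). A clean fix is to take $\lambda\in L^2(\X^2,\mu)$, as Definition~\ref{defgap10} already requires, to assume $\int\rho(x,x)\,{\rm d}\varpi(x)<\infty$, and to use a jointly measurable version of $Z$. Cauchy--Schwarz then bounds $\int|\lambda|\,\E|Z(x)Z(y)|\,{\rm d}\mu$ by $\|\lambda\|_\mu\int\rho(x,x)\,{\rm d}\varpi(x)$. This justifies Fubini and shows that almost every realization is admissible.
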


\begin{remark}
    If $\cE=\R$, $\varpi$ is a measure with a continuous density and $\lambda$ is a separable function, the gap inequalities (\ref{functiongapRplus}) boil down to Mercer's condition defining functions of positive type (aka positive semidefinite kernels) on $\X \times \X$:
    \begin{equation}
        \label{Mercer}
            \int_{\X^2} g(x) \, \rho(x,y) \, g(y) \, {\rm d}x \, {\rm d}y \geq 0,
        \end{equation}
    for any $g$ that is continuous and square integrable on $\X$ \citep{Mercer1909}.\\
    
\end{remark}

\section{Concluding remarks}
\label{conclusi}

We have derived a set of inequalities that are necessary and sufficient for a symmetric function to be the non-centered covariance, semivariogram, or higher-order moment, of a random field with index set $\X$ and codomain $\cE$ that is a closed or a compact subset of $\R$. Such inequalities generalize known results, in particular, the fact that the class of non-centered covariances coincides with the class of symmetric positive semidefinite functions when $\cE = \R$, and symmetric corner-positive semidefinite functions when $\cE = \{-1,1\}$, while the class of semivariograms coincides with the class of symmetric conditionally negative semidefinite functions that vanish on the diagonal of $\X \times \X$ when $\cE = \R$. In the continuous framework, one also retrieves Mercer's condition on positive semidefinite operators.

The key components of each inequality are
\begin{enumerate}
    \item a test matrix $\bLambda$ (discrete framework) or a test function $\lambda$ (continuous framework) that plays the role of the lens through which a tentative covariance, semivariogram or higher-order moment is investigated;
    \item a quantity that we named \emph{gap} that depends on the codomain and on the test matrix or test function, but not on the index set $\X$ nor on the tentative covariance, semivariogram or higher-order moment under consideration.\\
\end{enumerate}

The presented formalism shows connections not only with the theory of probability and stochastic processes, but also with topology, algebra, analysis, combinatorial optimization, and convex geometry. Our results give an insight into the spectral theory of covariance kernels that are realizable, given a codomain, a theory that is still in its infancy.

\section*{\textbf{Acknowledgments}}
This work was funded and supported by the National Agency for Research and Development of Chile [grants ANID CIA250010 and ANID Fondecyt 1250008].

\section*{\textbf{Declarations}}

\bmhead{Conflict of Interest} The authors declare no knowledge of meeting financial interests or personal relationships that could have appeared to influence the work reported in this paper. This article does not contain any studies involving human participants performed by the authors.

\appendix

\section{A look at particular codomains}
\label{app:particularcases}

\subsection{Random fields valued in $\R$ or $\Z$}

\begin{theorem}
\label{th:PSD}
Let $\cE=\R$ or $\Z$. A mapping $\rho: \X \times \X \to \R$ is the non-centered covariance of a random field on $\X$ with values in $\cE$ if, and only if, the following conditions hold:
    \begin{enumerate}
        \item Symmetry: $\rho(x,y)=\rho(y,x)$ for any $x,y \in \X$.
        \item Positive semidefiniteness: For any positive integer $n$, any set of points $x_1,\ldots,x_n$ in $\X$ and any set of real numbers $\lambda_1,\ldots,\lambda_n$, the inequality (\ref{PSD}) holds.\\
    \end{enumerate}
\end{theorem}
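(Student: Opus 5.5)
The plan is to deduce Theorem \ref{th:PSD} directly from Theorem \ref{matrixgapineq}, using the gap computations of Lemma \ref{lem:negeig0}, Lemma \ref{lem:negeig} and Corollary \ref{lem:gammaR}. Both $\R$ and $\Z$ are closed subsets of the real line, so Theorem \ref{matrixgapineq} applies. By its final clause, $\rho$ is a non-centered covariance of an $\cE$-valued field if and only if $\rho$ is symmetric and satisfies (\ref{matrixgap}) for every \emph{symmetric} real matrix $\bLambda$. It therefore suffices to show that, for symmetric $\rho$, this family of gap inequalities is equivalent to positive semidefiniteness (\ref{PSD}).

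\textbf{Gap inequalities imply (\ref{PSD}).} Fix $\lambda_1,\ldots,\lambda_n$ and points $x_1,\ldots,x_n$. Take the rank-one matrix $\bLambda=\blambda^\top\blambda$, which is symmetric positive semidefinite. Since $0\in\cE$ for both $\cE=\R$ and $\cE=\Z$, Lemma \ref{lem:negeig0} gives $\gamma(\bLambda,\cE)=0$. Inequality (\ref{matrixgap}) then reads $\langle \bLambda,\bR\rangle=\sum_{k,\ell}\lambda_k\lambda_\ell\rho(x_k,x_\ell)\geq 0$, which is exactly (\ref{PSD}).

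\textbf{(\ref{PSD}) implies the gap inequalities.} Let $\bLambda$ be any symmetric matrix and $\bR$ the matrix built on $x_1,\ldots,x_n$; by (\ref{PSD}), $\bR$ is positive semidefinite. By Corollary \ref{lem:gammaR}, $\gamma(\bLambda,\cE)$ is either $-\infty$ or $0$. If it is $-\infty$, (\ref{matrixgap}) holds trivially. If it is $0$, Lemma \ref{lem:negeig} forces $\bLambda$ to have no negative eigenvalue, so $\bLambda$ is positive semidefinite. The trace inner product of two symmetric positive semidefinite matrices is nonnegative: writing $\bLambda=\sum_i \mu_i \boldsymbol{u}_i^\top\boldsymbol{u}_i$ with $\mu_i\ge0$ gives $\langle\bLambda,\bR\rangle=\sum_i\mu_i\,\boldsymbol{u}_i\bR\boldsymbol{u}_i^\top\ge0=\gamma(\bLambda,\cE)$.

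The argument is identical for $\R$ and $\Z$, because the cited lemmas cover both sets. The only delicate point is already contained in Lemma \ref{lem:negeig} for $\Z$: an integer vector achieving a negative quadratic form has to be constructed by rational approximation of a negative eigenvector, followed by clearing denominators and scaling. Since that lemma is assumed, no real obstacle remains; the main care is to invoke the symmetric-$\bLambda$ version of Theorem \ref{matrixgapineq}, so that the spectral arguments apply.
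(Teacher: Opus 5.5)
For $\cE=\R$ your argument is correct and follows the paper's own route. The proof of Theorem~\ref{th:PSD} defers to Case~1 of the proof of Theorem~\ref{matrixgapineq}. The only difference is that the paper gets $\langle\bLambda,\bR\rangle\geq 0$ from the Schur product theorem, writing it as $\boldsymbol{1}(\bLambda\circ\bR)\boldsymbol{1}^\top$, while you use the spectral decomposition of $\bLambda$.

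For $\cE=\Z$, however, the proof cannot be completed: the claim is false as soon as $\X$ has two points. The step that breaks is your appeal to Theorem~\ref{matrixgapineq} for this set, and the paper's proof makes the same appeal. Take $x_1\neq x_2$ in $\X$ and $\rho(x,y)=f(x)f(y)$ with $f(x_1)=1$, $f(x_2)=\sqrt{2}$ and $f=0$ elsewhere; $\rho$ is symmetric and positive semidefinite. If a $\Z$-valued field $Z$ had non-centered covariance $\rho$, then
\begin{equation*}
\E\bigl[(Z(x_2,\cdot)-\sqrt{2}\,Z(x_1,\cdot))^2\bigr]=\rho(x_2,x_2)-2\sqrt{2}\,\rho(x_1,x_2)+2\,\rho(x_1,x_1)=2-4+2=0.
\end{equation*}
So $Z(x_2,\cdot)=\sqrt{2}\,Z(x_1,\cdot)$ almost surely. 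Since both are integers and $\sqrt{2}\notin\Q$, this forces $Z(x_1,\cdot)=0$ almost surely, contradicting $\rho(x_1,x_1)=1$.

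The defect lies in Case~4 of the proof of Theorem~\ref{matrixgapineq}, which asserts that the convex hull $\mathcal{H}_n$ of $\{\bz^\top\bz:\bz\in\cE^n\}$ is closed whenever $\cE$ is closed. The separation argument there only shows that the gap inequalities characterize the closure $\overline{\mathcal{H}_n}$. For $\cE=\Z$ that closure is the whole positive semidefinite cone, which is exactly why the gaps collapse to $0$ or $-\infty$. But $\mathcal{H}_2$ does not contain $\bR=(1,\sqrt{2})^\top(1,\sqrt{2})$. A convex combination of matrices $\bz^\top\bz$ equal to a rank-one matrix can only involve (with positive weight) multiples of $(1,\sqrt{2})$, and the only such integer vector is $\boldsymbol{0}$, so the combination would vanish. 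Yet $\bR$ is the limit of $q^{-2}\bz_q^\top\bz_q\in\mathcal{H}_2$ with $\bz_q=(q,p_q)\in\Z^2$ and $p_q/q\to\sqrt{2}$. So the ``delicate point'' is not Lemma~\ref{lem:negeig} but realizability.

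What survives for $\Z$ is a closure statement in the spirit of Theorem~\ref{matrixgapineq_boundedset}. A symmetric $\rho$ is positive semidefinite if and only if it is a pointwise limit of non-centered covariances of $\Z$-valued fields. One such sequence is $B_m\lfloor mY+\tfrac{1}{2}\rfloor$, with $Y$ a zero-mean Gaussian field of covariance $\rho$ and $B_m$ an independent Bernoulli variable with parameter $m^{-2}$. For $\cE=\R$ no such issue arises, since there $\mathcal{H}_n$ is the closed positive semidefinite cone and Case~1 uses a Gaussian field directly.
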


\begin{theorem}
    \label{variogr_unboundedset}
    Let $\cE=\R$ or $\Z$. A mapping $g: \X \times \X \to \R$ is the semivariogram of a random field on $\X$ with values in $\cE$ if, and only if, it fulfills the following conditions:
    \begin{enumerate}
        \item Symmetry: $g(x,y)=g(y,x)$ for any $x, y \in \X$.
        \item Value on the diagonal of $\X \times \X$: $g(x,x)=0$ for any $x \in \X$.
        \item Conditional negative semidefiniteness: for any positive integer $n$, any set of points $x_1,\ldots,x_n$ in $\X$ and any set of real numbers $\lambda_1,\ldots,\lambda_n$ that sum to zero, one has 
        \begin{equation}
        \label{condneg}
            \sum_{k=1}^n \sum_{\ell=1}^n \lambda_{k} \,\lambda_{\ell} \, g(x_{k},x_{\ell}) \leq 0.\\
        \end{equation}
    \end{enumerate}
\end{theorem}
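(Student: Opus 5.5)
We plan to deduce Theorem \ref{variogr_unboundedset} from Theorem \ref{variogr2}, which applies since $\R$ and $\Z$ are closed subsets of $\R$. What remains is to show that, for $\cE=\R$ or $\Z$, the gap inequalities (\ref{gap4variog}), taken over all real symmetric $\bLambda$ with zero diagonal, are equivalent to the conditional negative semidefiniteness (\ref{condneg}) of $g$. Symmetry and the zero-diagonal condition are common to both statements.

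\emph{Gap inequalities imply (\ref{condneg}).} Take $\lambda_1,\ldots,\lambda_n$ summing to zero and set $\bLambda=-\blambda^\top\blambda$, with its diagonal replaced by zeros. Since $g(x_k,x_k)=0$, deleting the diagonal does not change $\langle \bLambda,\bG\rangle$, which equals $-\blambda\bG\blambda^\top$. It then suffices to show $\eta(\bLambda,\cE)=0$, because (\ref{gap4variog}) then gives $-\blambda \bG \blambda^\top\le 0$.

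Since $\eta$ is unchanged by modifying the diagonal, we may compute it with $-\blambda^\top\blambda$ itself. We get
\[
\tfrac12\sum_{k,\ell}(-\lambda_k\lambda_\ell)(z_k-z_\ell)^2=-\tfrac12\Big(2\sum_k\lambda_k z_k^2\sum_\ell\lambda_\ell-2\big(\textstyle\sum_k\lambda_k z_k\big)^2\Big)=\big(\textstyle\sum_k \lambda_k z_k\big)^2 .
\]
This is wrong in sign, so the matrix should instead be $\bLambda=\blambda^\top\blambda$ with its diagonal removed. Then $\langle\bLambda,\bG\rangle=\blambda\bG\blambda^\top$, and the quantity inside the sup equals $-(\sum_k\lambda_kz_k)^2\le 0$. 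Taking $\bz=0$ gives equality, so $\eta=0$ and (\ref{condneg}) follows.

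\emph{(\ref{condneg}) implies the gap inequalities.} By Lemma \ref{gamma2eta}, $\eta(\bLambda,\cE)=-\gamma(\bLambda-\bDelta,\cE)$. By Corollary \ref{lem:etaR}, this equals $0$ or $+\infty$. When it is $+\infty$ there is nothing to prove, so assume $\eta(\bLambda,\cE)=0$. By Lemma \ref{lem:negeig}, the symmetric matrix $\bDelta-\bLambda$ then has no positive eigenvalue, i.e.\ $\bLambda-\bDelta$ is positive semidefinite. This is where the integer case is covered, since Lemma \ref{lem:negeig} holds for $\Z$ as well as for $\R$.

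Note that $\bM:=\bDelta-\bLambda$ is negative semidefinite with zero row sums. We aim to show $\langle \bLambda,\bG\rangle\le 0$. Because $\bG$ has zero diagonal, $\langle\bLambda,\bG\rangle=-\langle\bM,\bG\rangle$. Write $-\bM=\sum_i \bm u_i^\top\bm u_i$ as a spectral decomposition. Each eigenvector $\bm u_i$ for a nonzero eigenvalue is orthogonal to the kernel vector $\boldsymbol 1$, so its entries sum to zero. Hence $\langle -\bM,\bG\rangle=\sum_i \bm u_i\bG\bm u_i^\top\le 0$ by (\ref{condneg}), that is, $\langle\bLambda,\bG\rangle\le 0=\eta$.

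The main obstacle is this second direction. It hinges on identifying $\eta=0$ with $\bLambda-\bDelta\succeq 0$ for both codomains, which is exactly Lemma \ref{lem:negeig}, and on the kernel argument showing the eigenvectors have zero sum.
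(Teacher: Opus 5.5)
Your proof is correct and follows the paper's overall structure. You reduce to Theorem \ref{variogr2}, use that $\eta(\bLambda,\cE)\in\{0,+\infty\}$, and identify $\eta(\bLambda,\cE)=0$ with $\bLambda-\bDelta$ being positive semidefinite via Lemmas \ref{gamma2eta} and \ref{lem:negeig}. You then obtain (\ref{condneg}) from the test matrices $\blambda^\top\blambda$ with $\sum_k\lambda_k=0$.

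The genuine difference is in the sufficiency step.

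\textbf{The paper's route.} It rewrites $\langle\bLambda,\bG\rangle=\langle\bDelta-\bLambda,\widetilde{\bG}\rangle$ with $\widetilde{\bG}=[g(x_1,x_\ell)+g(x_k,x_1)-g(x_k,x_\ell)]_{k,\ell=1}^n$. It then invokes Reams' lemma to get $\widetilde{\bG}$ positive semidefinite when $g$ is conditionally negative semidefinite, and concludes with the Schur product theorem.

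\textbf{Your route.} You note that $\boldsymbol{1}$ lies in the kernel of $\bLambda-\bDelta$. Hence its spectral decomposition is a sum of rank-one terms $\boldsymbol{u}_i^\top\boldsymbol{u}_i$ whose vectors $\boldsymbol{u}_i$ have entries summing to zero, and you apply (\ref{condneg}) to each term directly.

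\textbf{What each buys.} Your argument is shorter and self-contained: it needs no external lemma and no Hadamard products. Its sign bookkeeping is also clean, whereas the paper's displayed chain (\ref{traceineq}) has a sign slip. That chain should read $\langle\bLambda,\bG\rangle\le 0$, equivalently $\boldsymbol{1}((\bLambda-\bDelta)\circ\widetilde{\bG})\boldsymbol{1}^\top\ge 0$. The paper's route, in turn, makes the parallel with Case 1 of Theorem \ref{matrixgapineq} explicit. For a field with no drift, $\widetilde{\bG}$ is exactly the covariance matrix of the increments $Z(x_k,\cdot)-Z(x_1,\cdot)$, so the semivariogram statement reduces to a covariance statement about positive semidefinite matrices.

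When you write it up, drop the false start with $-\blambda^\top\blambda$ and go straight to $\blambda^\top\blambda$ with its diagonal set to zero.
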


\subsection{Random fields valued in $\Z \smallsetminus \{0\}$}

\begin{lemma}
\label{lem:Nplus}
    For a real symmetric positive semidefinite matrix $\bLambda$, $\gamma(\bLambda,\Z\smallsetminus\{0\})= \gamma_n  \det(\bLambda)^{\frac{1}{n}}$, where $\gamma_1$, $\gamma_2$, $\ldots$, are the so-called Hermite's constants. In particular, one has \citep{Blichfeldt1929, Cassels1997, Conway1998},
    \begin{itemize}
        \item $[\gamma_1, \gamma_2,\gamma_3,\gamma_4,\gamma_5, \gamma_6, \gamma_7, \gamma_8, \gamma_{24}] = [1, \sqrt{\frac{4}{3}}, \sqrt[3]{2}, \sqrt{2}, \sqrt[5]{8}, \sqrt[6]{\frac{64}{3}}, \sqrt[7]{64},2,4]$
        \item $\frac{1}{\pi} \left[2\Gamma \left(1+\frac{n}{2}\right)\right]^{\frac{2}{n}} \leq \gamma_n \leq \frac{2}{\pi} \, \Gamma \left(2+\frac{n}{2}\right)^{\frac{2}{n}}$
        \item $\gamma_n \leq n$ for any $n \in \N$
        \item $\gamma_n \leq \frac{2n}{3}$ for any $n \in \N \smallsetminus \{0 \}$
        \item $\frac{1}{2\pi \mathsf{e}} \lesssim \frac{\gamma_n}{n} \lesssim \frac{1.744}{2\pi \mathsf{e}}$ for large $n$.
    \end{itemize}
\end{lemma}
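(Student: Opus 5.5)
The plan is to turn the $\gamma$-gap on $\Z\smallsetminus\{0\}$ into the arithmetic minimum of the positive semidefinite quadratic form $\bz\mapsto\bz\bLambda\bz^\top$ on the lattice $\Z^n$. Once that is done, the displayed identity is the classical normalisation of that minimum by $\det(\bLambda)^{1/n}$, whose constant is Hermite's, and the listed numerical facts are quoted from \cite{Blichfeldt1929, Cassels1997, Conway1998}.

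\textbf{Step 1 (degenerate case).} If $\det(\bLambda)=0$, the kernel of $\bLambda$ is a nonzero subspace, and I will show the gap vanishes so that both sides are $0$. If the kernel is rational, it contains integer vectors. Otherwise I will use Dirichlet simultaneous approximation along a kernel direction $\boldsymbol{v}$: there are integer vectors $\bz$ with $\|\bz-t\boldsymbol{v}\|$ arbitrarily small. This forces $\bz\bLambda\bz^\top\to 0$, by Lemma \ref{lem:negeig0} together with continuity of the form near its kernel. To make these approximants admissible for $\Z\smallsetminus\{0\}$, I need all their coordinates nonzero. I plan to obtain this by first choosing $\boldsymbol{v}$ with no zero coordinate, or by a small perturbation within the kernel when the kernel has dimension at least two. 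This is the delicate point of Step 1.

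\textbf{Step 2 (nondegenerate case, identification with the lattice minimum).} Assume now $\det(\bLambda)>0$. Write $\bLambda=\boldsymbol{B}^\top\boldsymbol{B}$, so that $\bz\bLambda\bz^\top=\|\bz\boldsymbol{B}^\top\|^2$ and $\{\bz\bLambda\bz^\top:\bz\in\Z^n\}$ is the set of squared norms of the lattice $L=\Z^n\boldsymbol{B}^\top$. The covolume of $L$ is $\det(\bLambda)^{1/2}$. Because this lattice is discrete, the infimum over $(\Z\smallsetminus\{0\})^n$ is attained.

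I will then compare it with $m(\bLambda)=\min\{\bz\bLambda\bz^\top:\bz\in\Z^n\smallsetminus\{\mathbf 0\}\}$. One inequality is immediate, namely $(\Z\smallsetminus\{0\})^n\subset\Z^n\smallsetminus\{\mathbf 0\}$. For the other, I would use the freedom to change the basis of $L$ by a unimodular matrix. The idea is to move a shortest vector into a position whose coordinates are all nonzero, which can be done by adding suitable multiples of other basis vectors. The gap is therefore read as the lattice minimum $m(\bLambda)$.

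\textbf{Step 3 (Hermite normalisation).} Define $\gamma_n$ as Hermite's constant, $\sup m(\bLambda)/\det(\bLambda)^{1/n}$ over positive definite forms of order $n$. Hermite's inequality, or Minkowski's convex body theorem applied to the ball in $L$, gives
\begin{equation*}
\gamma(\bLambda,\Z\smallsetminus\{0\}) \leq \gamma_n\,\det(\bLambda)^{1/n}.
\end{equation*}
Equality holds for the extremal (critical) forms. For a general form, the identity as written is the statement that the normalised gap $\gamma(\bLambda,\Z\smallsetminus\{0\})/\det(\bLambda)^{1/n}$ equals the Hermite invariant of that form. So I would prove the display by exhibiting this normalisation and citing the extremal-form values for the constants.

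The listed values $\gamma_1,\ldots,\gamma_8,\gamma_{24}$ are then taken from the literature. These come from the root lattices $A_2,A_3,D_4,D_5,E_6,E_7,E_8$ and from the Leech lattice. The Blichfeldt--Minkowski bounds and the asymptotics $\gamma_n\asymp n/(2\pi\mathsf{e})$ are likewise cited, not proved.

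\textbf{Main obstacle.} The main obstacle is Step 2, the claim that restricting every coordinate to be nonzero does not raise the minimum. The unimodular re-basing must keep the value of the form and only remove zero coordinates, and I expect to need a careful choice of basis for this. The second difficulty is reconciling pointwise equality with the supremum that defines $\gamma_n$. I would handle it by stating explicitly that the identity is the normalised form of the lattice minimum, with $\gamma_n$ attained on extremal $\bLambda$.
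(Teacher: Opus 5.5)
\textbf{The proposal cannot be completed, because the displayed identity is false.} Each of your three steps breaks at a point where the identity itself fails.

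\emph{Step 2.} Restricting to vectors with \emph{every} coordinate nonzero genuinely raises the minimum, and unimodular re-basing cannot undo this. Substituting $\bz=\boldsymbol{w}\boldsymbol{U}$ with $\boldsymbol{U}$ unimodular turns the form into $\boldsymbol{w}(\boldsymbol{U}\bLambda\boldsymbol{U}^\top)\boldsymbol{w}^\top$, but the nonzero-coordinate constraint stays on $\bz$, not on $\boldsymbol{w}$. So after re-basing you are no longer computing $\gamma(\bLambda,\Z\smallsetminus\{0\})$. Concretely, for $\bLambda=\boldsymbol{I}_2$ one has $\gamma(\boldsymbol{I}_2,\Z\smallsetminus\{0\})=\min\{z_1^2+z_2^2: z_1,z_2\neq 0\}=2$, while $m(\boldsymbol{I}_2)=1$.

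\emph{Step 3.} This fails independently. Hermite's constant is a supremum over forms, so $m(\bLambda)=\gamma_n\det(\bLambda)^{1/n}$ holds only for forms attaining that supremum. For $\boldsymbol{I}_2$, $m=1<\sqrt{4/3}=\gamma_2$. Replacing $\gamma_n$ by a form-dependent ``Hermite invariant'' changes the statement, whose $\gamma_n$ are universal constants with the listed values.

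\emph{Step 1.} This fails too. $\bLambda=\mathrm{diag}(1,0)$ is singular, but its kernel, spanned by $(0,1)$, contains no vector with all coordinates nonzero. Indeed $\gamma(\bLambda,\Z\smallsetminus\{0\})=1\neq 0$.

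So no argument can establish the lemma as stated. The two correct facts you have, $\gamma(\bLambda,\Z\smallsetminus\{0\})\ge m(\bLambda)$ and $m(\bLambda)\le\gamma_n\det(\bLambda)^{1/n}$, point in opposite directions. Moreover, $\boldsymbol{I}_n$ has gap $n$ and right-hand side $\gamma_n<n$ for $n\ge 2$. Hence even the one-sided bound $\gamma(\bLambda,\Z\smallsetminus\{0\})\le\gamma_n\det(\bLambda)^{1/n}$ is false, and that bound is exactly what the proof of Theorem~\ref{th:PSDZ0} uses.

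The paper gives no derivation of this lemma, only citations to the lattice literature. That literature concerns $m(\bLambda)$ over $\Z^n\smallsetminus\{\boldsymbol{0}\}$ and its supremum, not the gap. Note also that the listed bullet $\gamma_n\le 2n/3$ fails for $n=1$.

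What the application actually needs is available directly. Averaging $\bz\bLambda\bz^\top$ over uniformly random sign vectors gives $\gamma(\bLambda,\Z\smallsetminus\{0\})\le\gamma(\bLambda,\{-1,1\})\le\text{tr}(\bLambda)$ for every real symmetric $\bLambda$. This suffices for the case $\varepsilon\ge 1$ of Theorem~\ref{th:PSDZ0}.
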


\bigskip
\begin{theorem}
\label{th:PSDZ0}
Let $\rho: \X \times \X \to \R$ be a symmetric positive semidefinite function. Then, $\rho + \varepsilon \, \delta$, defined as
\begin{equation*}
    \rho(x,y) + \varepsilon \, \delta(x,y) = \begin{cases}
    \rho(x,y) \text{ if $x \neq y$}\\
    \rho(x,x) + \varepsilon \text{ otherwise,}
    \end{cases}
\end{equation*}
is the non-centered covariance of a random field on $\X$ with values in $\Z \smallsetminus \{0\}$, provided that $\varepsilon \geq 1$. If, furthermore, $\rho(x,x) \geq \frac{1}{3}$ for any $x \in \X$, then the condition on $\varepsilon$ can be reduced to $\varepsilon \geq \frac{2}{3}$. \\
\end{theorem}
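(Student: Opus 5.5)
The plan is to apply Theorem~\ref{matrixgapineq} with the closed codomain $\cE=\Z\smallsetminus\{0\}$. It restricts $\bLambda$ to symmetric test matrices. The symmetry of $\rho+\varepsilon\,\delta$ is inherited from that of $\rho$. So the task reduces to checking the gap inequality
\begin{equation*}
\langle \bLambda, \bR + \varepsilon \bDelta_0 \rangle \geq \gamma(\bLambda,\Z\smallsetminus\{0\})
\end{equation*}
for every symmetric $\bLambda$ and every choice of points $x_1,\ldots,x_n$. Here $\bR=[\rho(x_k,x_\ell)]$ and $\bDelta_0=[\delta(x_k,x_\ell)]$.

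\emph{Reduction to distinct points.} If some points coincide, I merge them by summing the corresponding rows and columns of $\bLambda$. This gives a smaller matrix $\bLambda'$ on distinct points with the same value of $\langle\bLambda,\bR+\varepsilon\bDelta_0\rangle$. Moreover, the vectors $\bz$ with equal coordinates on coinciding points form a subset of $\cE^n$, so $\gamma(\bLambda,\cE)\le\gamma(\bLambda',\cE)$. It therefore suffices to treat distinct points, for which $\bDelta_0$ is the identity.

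\emph{Reduction to positive semidefinite $\bLambda$.} If $\bLambda$ has a negative eigenvalue, Lemma~\ref{lem:negeig} gives $\gamma(\bLambda,\Z\smallsetminus\{0\})=-\infty$, and there is nothing to prove. So assume $\bLambda$ is positive semidefinite. Since $\bR$ is positive semidefinite, $\langle\bLambda,\bR\rangle\ge0$. It is then enough to show $\varepsilon\,\mathrm{tr}(\bLambda)\ge\gamma(\bLambda,\Z\smallsetminus\{0\})$.

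\emph{The case $\varepsilon\ge1$.} Let $\bz$ be uniform on $\{-1,1\}^n\subset(\Z\smallsetminus\{0\})^n$. The infimum is at most the average, so
\begin{equation*}
\gamma(\bLambda,\Z\smallsetminus\{0\}) \le \E(\bz\bLambda\bz^\top) = \mathrm{tr}(\bLambda) \le \varepsilon\,\mathrm{tr}(\bLambda).
\end{equation*}
This proves the first claim.

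\emph{The refinement to $\varepsilon\ge 2/3$.} For $n\ge2$, I use Lemma~\ref{lem:Nplus}, the bound $\gamma_n\le 2n/3$, and the AM--GM inequality on the eigenvalues of $\bLambda$:
\begin{equation*}
\gamma(\bLambda,\Z\smallsetminus\{0\}) = \gamma_n\det(\bLambda)^{1/n} \le \tfrac{2}{3}\, n \det(\bLambda)^{1/n} \le \tfrac23\,\mathrm{tr}(\bLambda) \le \varepsilon\,\mathrm{tr}(\bLambda).
\end{equation*}
If $\bLambda$ is singular, the gap is $0$ and the inequality is trivial.

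The one step I expect to be delicate is $n=1$, which is why the hypothesis on $\rho(x,x)$ is needed. There $\gamma_1=1>2/3$, so the bound $\gamma_n\le 2n/3$ fails; I would not rely on its stated validity for $n=1$. Instead I keep the term $\langle\bLambda,\bR\rangle$. With $\bLambda=\lambda\ge0$, we have $\gamma(\lambda,\Z\smallsetminus\{0\})=\lambda$, and the inequality reads $\lambda(\rho(x,x)+\varepsilon)\ge\lambda$. This holds because $\rho(x,x)\ge1/3$ and $\varepsilon\ge2/3$. The case $\lambda<0$ is already covered by Lemma~\ref{lem:negeig}.

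With these cases checked, Theorem~\ref{matrixgapineq} yields a random field valued in $\Z\smallsetminus\{0\}$ whose non-centered covariance is $\rho+\varepsilon\,\delta$.
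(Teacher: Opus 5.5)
Your proof of the case $\varepsilon\ge1$ is correct, and it takes a different and better route than the paper. The paper lower-bounds $\varepsilon\,\mathrm{tr}(\bLambda)$ by $\varepsilon n\det(\bLambda)^{1/n}$ (AM--GM) and then uses the Hermite-constant formula of Lemma~\ref{lem:Nplus} with $\gamma_n\le n$. Your averaging over uniform $\bz\in\{-1,1\}^n$ gives $\gamma(\bLambda,\Z\smallsetminus\{0\})\le\mathrm{tr}(\bLambda)$ in one line. It needs no lattice theory and does not depend on that lemma, which (see below) is not correct as stated. Your merging of coincident points also supplies a detail the paper passes over when it writes $\bR+\varepsilon\boldsymbol{I}$.

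The refinement to $\varepsilon\ge\frac23$ has a genuine gap. It is the same gap as in the paper's proof, which follows your route exactly: discard $\langle\bLambda,\bR\rangle$, apply AM--GM and $\gamma_n\le 2n/3$, and treat $n=1$ separately. The inequality $\gamma(\bLambda,\Z\smallsetminus\{0\})\le\frac23\,\mathrm{tr}(\bLambda)$, for positive semidefinite $\bLambda$ with $n\ge2$, is false. Take $\bLambda=\mathrm{diag}(\lambda_1,\ldots,\lambda_n)\neq\boldsymbol{0}$ with $\lambda_k\ge0$. Since $z_k^2\ge1$ for every $\bz\in(\Z\smallsetminus\{0\})^n$, we get $\gamma(\bLambda,\Z\smallsetminus\{0\})=\mathrm{tr}(\bLambda)$, so your own averaging bound is attained. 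The same kind of example refutes ``singular implies gap $0$'': $\mathrm{diag}(1,0)$ has gap $1$.

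The source of the error is Lemma~\ref{lem:Nplus} itself. Hermite's constant concerns the minimum over $\Z^n\smallsetminus\{\boldsymbol{0}\}$, and only as a supremum over forms. By contrast, $\gamma(\cdot,\Z\smallsetminus\{0\})$ is an infimum over vectors with \emph{all} entries nonzero, which can be much larger. For $\mathrm{diag}(1,100)$ it equals $101$, against $\gamma_2\det(\bLambda)^{1/2}\approx 11.5$. So the failure you noticed at $n=1$ occurs in every dimension. For diagonal $\bLambda$ the gap inequality holds only because $\langle\bLambda,\bR\rangle\ge\frac13\mathrm{tr}(\bLambda)$; the hypothesis on $\rho(x,x)$ would have to be used for all $n$, not just $n=1$.

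No such repair can work, because the refined claim is itself false. Take $\rho=\frac13\boldsymbol{C}_0$ on $n$ points, with $\boldsymbol{C}_0$ any correlation matrix (positive semidefinite, unit diagonal), and $\varepsilon=\frac23$. A $\Z\smallsetminus\{0\}$-valued $Z$ with $\E Z(x)^2=1$ must take values in $\{-1,1\}$. One would then need $\frac13\langle\bA,\boldsymbol{C}_0\rangle\le\max_{\bz\in\{-1,1\}^n}\bz\bA\bz^\top$ for every symmetric zero-diagonal $\bA$. This says the Grothendieck constant of the complete graph $K_n$ is at most $3$ for all $n$. Alon, Makarychev, Makarychev and Naor (2006) showed that this constant grows like $\log n$. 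Hence the second claim fails on large enough point sets, and only the $\varepsilon\ge1$ statement, which your argument does establish, survives.
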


\subsection{Binary random fields}

\begin{definition}[\citealp{McMillan1955}]
    A unit process is a random field valued in $\{-1,1\}$.\\
\end{definition}

\begin{definition}[\citealp{McMillan1955}]
\label{def:CP}
    A square matrix $\bLambda$ is corner positive if $\gamma(\bLambda,\{-1,1\}) \geq 0$.\\
\end{definition}

\begin{theorem}[\citealp{McMillan1955}]
\label{th:CP}
A mapping $\rho: \X \times \X \to \R$ is the non-centered covariance of a unit process in $\X$ if, and only if, it fulfills the following conditions:
    \begin{enumerate}
        \item Symmetry: $\rho(x,y)=\rho(y,x)$ for any $x,y \in \X$.
        \item Value on the diagonal of $\X \times \X$: $\rho(x,x)=1$ for any $x \in \X$.
        \item Corner positive inequalities: for any positive integer $n$, any set of points $x_1,\ldots,x_n$ in $\X$ and any corner positive matrix $\bLambda=[\lambda_{k\ell}]_{k,\ell=1}^n$, one has 
        \begin{equation}
        \label{corner}
            \sum_{k=1}^n \sum_{\ell=1}^n \lambda_{k\ell} \, \rho(x_k,x_\ell) \geq 0.
        \end{equation}
    \end{enumerate}
\end{theorem}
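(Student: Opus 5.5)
The natural route is to treat Theorem \ref{th:CP} as the special case $\cE=\{-1,1\}$ of Theorem \ref{matrixgapineq}. The set $\{-1,1\}$ is closed, so Theorem \ref{matrixgapineq} applies: $\rho$ is the non-centered covariance of a unit process if, and only if, $\rho$ is symmetric and
\begin{equation*}
\langle \bLambda,\bR\rangle \geq \gamma(\bLambda,\{-1,1\})
\end{equation*}
for every $n$, every real square $\bLambda$ and every $x_1,\ldots,x_n$. It then remains to show that, for symmetric $\rho$, this family of gap inequalities is equivalent to the pair of conditions 2 and 3 of Theorem \ref{th:CP}.

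\emph{Necessity.}
\begin{itemize}
\item Corner positive inequalities: if $\bLambda$ is corner positive, then by Definition \ref{def:CP} we have $\gamma(\bLambda,\{-1,1\})\geq 0$, and the gap inequality gives $\langle\bLambda,\bR\rangle\geq 0$.
\item Diagonal values: take $n=1$ and $\bLambda=[\pm 1]$. Since $z^2=1$ for $z\in\{-1,1\}$, the gaps are $\gamma([1],\{-1,1\})=1$ and $\gamma([-1],\{-1,1\})=-1$. The two gap inequalities read $\rho(x,x)\geq 1$ and $-\rho(x,x)\geq -1$, hence $\rho(x,x)=1$. Alternatively, this is immediate from $Z^2(x)=1$ almost surely.
\end{itemize}

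\emph{Sufficiency.} Assume conditions 1--3 and fix an arbitrary real square $\bLambda$. If $\gamma:=\gamma(\bLambda,\{-1,1\})=-\infty$ there is nothing to prove; the infimum is over a finite set, so $\gamma$ is actually a finite minimum. Set $\bLambda'=\bLambda-\frac{\gamma}{n}\mathbf{I}_n$. For every $\bz\in\{-1,1\}^n$ we have $\bz\bz^\top=n$, so
\begin{equation*}
\bz\bLambda'\bz^\top=\bz\bLambda\bz^\top-\gamma\geq 0 .
\end{equation*}
Hence $\bLambda'$ is corner positive. Condition 3 then gives $\langle\bLambda',\bR\rangle\geq 0$. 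Condition 2 makes the diagonal of $\bR$ equal to ones, so $\langle \mathbf{I}_n,\bR\rangle=n$, and
\begin{equation*}
\langle\bLambda,\bR\rangle=\langle\bLambda',\bR\rangle+\frac{\gamma}{n}\,n\geq\gamma .
\end{equation*}
This is exactly the gap inequality (\ref{matrixgap}), and Theorem \ref{matrixgapineq} yields the existence of the unit process.

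I do not expect a real obstacle here; the only point needing care is the shifting trick in the sufficiency step. It relies on the identity $\bz\bz^\top=n$ on $\{-1,1\}^n$ and on the diagonal condition $\rho(x,x)=1$, and this is precisely why condition 2 must be stated separately in Theorem \ref{th:CP}. Nonsymmetric $\bLambda$ need no separate treatment, since the shift argument works for any real square matrix; alternatively, the last sentence of Theorem \ref{matrixgapineq} lets us restrict to symmetric ones.
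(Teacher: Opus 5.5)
Your proof is correct and follows essentially the paper's route: specialize Theorem \ref{matrixgapineq} to $\cE=\{-1,1\}$, get $\rho(x,x)=1$ from the $1\times 1$ matrices $\pm 1$, and for sufficiency shift $\bLambda$ by a diagonal term, using $z_k^2=1$ and $\rho(x_k,x_k)=1$, to make it corner positive. The only difference is cosmetic: the paper puts the whole shift on the $(1,1)$ entry via $\bJ$ rather than spreading it as $\frac{\gamma}{n}\mathbf{I}_n$, and your sign is the correct one, since the paper's printed $\bLambda+\gamma(\bLambda,\{-1,1\})\,\bJ$ should read $\bLambda-\gamma(\bLambda,\{-1,1\})\,\bJ$.
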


\begin{theorem}
\label{thMcMillan}
    Let $\rho: \X \times \X \to [-1,1]$ be the non-centered covariance of a random field valued in $[-1,1]$. Then, the mapping $\rho^*$ defined by
    \begin{equation*}
        \rho^*(x,y)=\begin{cases}
            \rho(x,y) \text{ if $x \neq y$}\\
            1 \text{ otherwise,}
        \end{cases}
    \end{equation*}
    is the non-centered covariance of a unit process in $\X$.
\end{theorem}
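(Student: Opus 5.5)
Theorem \ref{thMcMillan} follows by combining Theorem \ref{th:CP} with Theorem \ref{matrixgapineq} applied to the closed set $\cE=[-1,1]$. The idea is to compare the $\gamma$-gaps of a matrix on $[-1,1]$ and on $\{-1,1\}$: on these two sets they differ only through the diagonal of $\bLambda$.

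\textbf{Preliminary reductions.} Symmetry of $\rho^*$ is inherited from $\rho$, and $\rho^*(x,x)=1$ by construction. By Theorem \ref{th:CP}, it then suffices to take any $n$, distinct points $x_1,\ldots,x_n$ and a corner positive matrix $\bLambda$, and show $\langle \bLambda,\bR^*\rangle\ge 0$, where $\bR^*=[\rho^*(x_k,x_\ell)]$. Repeated points can be merged by summing the corresponding rows and columns of $\bLambda$. This preserves corner positivity, because $\bz\bLambda\bz^\top$ for $\bz\in\{-1,1\}^n$ with equal entries at repeated indices is a special case. Likewise, $\bLambda$ can be replaced by its symmetric part.

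\textbf{Key step: decompose $\bLambda$.} Write $\bLambda=\bLambda_0+\bDelta$, where $\bDelta=\mathrm{diag}(\lambda_{11},\ldots,\lambda_{nn})$ and $\bLambda_0$ has zero diagonal. Since $z_k^2=1$ on $\{-1,1\}$,
\[
\gamma(\bLambda,\{-1,1\})=\gamma(\bLambda_0,\{-1,1\})+\mathrm{tr}(\bLambda).
\]
On the other hand, $\bz\mapsto \bz\bLambda_0\bz^\top$ is affine in each coordinate separately, because the zero diagonal removes every $z_k^2$ term. Its infimum over the cube $[-1,1]^n$ is therefore attained at a vertex, which gives $\gamma(\bLambda_0,[-1,1])=\gamma(\bLambda_0,\{-1,1\})$.

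\textbf{Conclusion.} Since $\rho$ is the non-centered covariance of a $[-1,1]$-valued field, the necessity part of Theorem \ref{matrixgapineq} gives
\[
\langle\bLambda_0,\bR\rangle\ge\gamma(\bLambda_0,[-1,1]).
\]
The off-diagonal entries of $\bR$ and $\bR^*$ coincide and $\bLambda_0$ has zero diagonal, so $\langle\bLambda_0,\bR^*\rangle=\langle\bLambda_0,\bR\rangle$. Combining,
\[
\langle\bLambda,\bR^*\rangle=\langle\bLambda_0,\bR\rangle+\mathrm{tr}(\bLambda)\ge \gamma(\bLambda_0,\{-1,1\})+\mathrm{tr}(\bLambda)=\gamma(\bLambda,\{-1,1\})\ge 0,
\]
and Theorem \ref{th:CP} yields the result.

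The main obstacle is the multilinearity/vertex argument, and it relies on the reduction to distinct points. With repeated points, $\rho^*$ would put $1$ at positions where $\bR$ carries an off-diagonal entry $\rho(x,x)\neq 1$, and the decomposition above would break; merging repeated points beforehand is what avoids this.
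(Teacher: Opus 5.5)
Your proof is correct and follows the paper's argument. Both split $\bLambda$ into its zero-diagonal part and its diagonal, use that a zero-diagonal quadratic form has the same infimum on $[-1,1]^n$ as on $\{-1,1\}^n$, and apply the necessity of the gap inequalities to $\rho$ on $[-1,1]$.

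The differences are minor. The paper verifies the gap inequalities of Theorem~\ref{matrixgapineq} for $\cE=\{-1,1\}$ directly; your chain already gives $\langle\bLambda,\bR^*\rangle\ge\gamma(\bLambda,\{-1,1\})$ for every $\bLambda$, so the detour through Theorem~\ref{th:CP} is unnecessary. The paper cites Megretski for the vertex lemma, which you prove by multi-affinity. It also silently assumes distinct points, a case your merging step handles explicitly.
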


\bigskip

The next theorem exhibits a class of real symmetric matrices $\bLambda$ for which one can calculate the gap $\gamma(\bLambda,\{-1,1\})$ without calculating $\bz \bLambda \bz^\top$ for all the possible realizations $\bz \in \{-1,1\}^n$. The application of Theorem \ref{matrixgapineq} to these matrices therefore provides necessary conditions for a given mapping $\rho$ to be a realizable non-centered covariance of a unit process.\\

\begin{theorem}
\label{th:necessary-11}
Necessary conditions for a mapping $\rho: \X \times \X \to \R$ to be the non-centered covariance of a unit process in $\X$ are:
    \begin{enumerate}
        \item Symmetry: $\rho(x,y)=\rho(y,x)$ for any $x,y \in \X$.
        \item Hadamard transform inequalities: for any positive integer $n$, any set of points $x_1,\ldots,x_n$ in $\X$ and any integers $u,v$ in $\{1,\ldots,n\}$, one has 
        \begin{equation}
        \label{Walsh}
            \sum_{k=1}^n \sum_{\ell=1}^n\lambda_{k \ell}(u,v) \, \rho(x_k,x_\ell) \geq b_m(n-q(u,v)) - q(u,v)^2,
        \end{equation}
        where 
        \begin{itemize}
            \item $\lambda_{k\ell}(u,v) = (-1)^{\sum_{j=1}^m [b_j(k) b_j(u)+b_j(\ell) b_j(v)]}$
            \item $\boldsymbol{b}(a)=[b_j(a)]_{j=1}^m$ is the binary representation of $a$
            \item $m = 1+ \lfloor \log_2(n) \rfloor$, with $\lfloor \cdot \rfloor$ the floor function
            \item $q(u,v) = b_m(\boldsymbol{r}(u,v)) \, \boldsymbol{1}^\top$
            \item $\boldsymbol{r}(u,v)=[r_i(u,v)]_{i=1}^n$ with $r_i(u,v)=\boldsymbol{b}(u) \veebar \boldsymbol{b}(v)) \, \boldsymbol{b}(i)^\top$
            \item $b_m(\boldsymbol{a})=[b_m(a_1),\ldots,b_m(a_n)]$, with $b_m(\cdot)$ the rightmost bit of the binary representation (least significant bit)
            \item $\veebar$ is the exclusive OR (bitwise addition modulo $2$)
            \item $\boldsymbol{1}$ is an $n$-dimensional row vector of ones.
        \end{itemize}
    In particular, if $u=v$, then $q(u,v)=0$ and the right-hand side of (\ref{Walsh}) boils down to $b_m(n)$, i.e., $0$ if $n$ is even and $1$ if $n$ is odd.\\
    \end{enumerate} 
\end{theorem}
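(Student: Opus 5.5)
The plan is to derive the statement as a direct consequence of Theorem \ref{matrixgapineq} with $\cE=\{-1,1\}$ (a closed set). Symmetry is already condition 1 there. Theorem \ref{matrixgapineq} allows arbitrary, not necessarily symmetric, real matrices $\bLambda$. So it suffices to show that, for every $n$ and every $u,v\in\{1,\ldots,n\}$, the matrix $\bLambda(u,v)=[\lambda_{k\ell}(u,v)]_{k,\ell=1}^n$ satisfies
\begin{equation*}
\gamma(\bLambda(u,v),\{-1,1\}) \geq b_m(n-q(u,v)) - q(u,v)^2 .
\end{equation*}
Inequality (\ref{matrixgap}) then yields (\ref{Walsh}).

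The key observation is that $\bLambda(u,v)$ has rank one. Put $h_u(k)=(-1)^{\boldsymbol{b}(k)\boldsymbol{b}(u)^\top}$, the Walsh--Hadamard character indexed by $u$. Then $\lambda_{k\ell}(u,v)=h_u(k)\,h_v(\ell)$, so for every $\bz\in\{-1,1\}^n$
\begin{equation*}
\bz\bLambda(u,v)\bz^\top=\Big(\sum_{k=1}^n z_k h_u(k)\Big)\Big(\sum_{\ell=1}^n z_\ell h_v(\ell)\Big).
\end{equation*}
Next change variables to $w_k=z_k h_u(k)\in\{-1,1\}$; this is a bijection of $\{-1,1\}^n$. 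Then $h_u(k)h_v(k)=(-1)^{(\boldsymbol{b}(u)\veebar\boldsymbol{b}(v))\boldsymbol{b}(k)^\top}=:s_k$, which is $-1$ exactly when the parity bit $b_m(r_k(u,v))$ equals $1$. Hence the number of indices with $s_k=-1$ is $q(u,v)$, and I will check that this matches the paper's definition $q(u,v)=b_m(\boldsymbol{r}(u,v))\boldsymbol{1}^\top$.

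Split $\{1,\ldots,n\}$ into $P=\{k:s_k=1\}$, of cardinality $n-q$, and $N=\{k:s_k=-1\}$, of cardinality $q$. Set $a=\sum_{k\in P}w_k$ and $c=\sum_{k\in N}w_k$. The quadratic form becomes
\begin{equation*}
(a+c)(a-c)=a^2-c^2 .
\end{equation*}
Since $|c|\le q$, we have $-c^2\ge -q^2$. Since $a$ is a sum of $n-q$ signs, $a\equiv n-q \pmod 2$, so $a^2\ge b_m(n-q)$, which is $0$ or $1$ according to parity. This gives the claimed lower bound on the gap, and in fact equality, by choosing the signs on $P$ to nearly cancel and all signs on $N$ equal. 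The case $u=v$ gives $s_k\equiv 1$, hence $q=0$ and the right-hand side $b_m(n)$.

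The only delicate step is bookkeeping rather than mathematics. One must reconcile the paper's bit conventions (binary representation of $k$ with $m=1+\lfloor\log_2 n\rfloor$ bits, $b_m$ the least significant bit of the inner product $r_i$) with the identity $h_u(k)h_v(k)=s_k$. In particular, the exponent $\sum_j [b_j(k)b_j(u)+b_j(\ell)b_j(v)]$ has to factor as the product of the two characters, which holds because only its parity matters.
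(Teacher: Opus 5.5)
Your proposal is correct and follows essentially the same route as the paper. Both factor $\bLambda(u,v)$ as a product of two Walsh characters and split the indices according to the parity of $r_i(u,v)$, which reduces the form to $a^2-c^2$ (the paper's $\sigma_1^2-\sigma_2^2$, where your substitution $w_k=z_k h_u(k)$ plays the role of its $\boldsymbol{U}=[\boldsymbol{W}_1,\boldsymbol{W}_2]$, $\boldsymbol{V}=[\boldsymbol{W}_1,-\boldsymbol{W}_2]$ reordering). Both then bound this by parity and $|c|\le q(u,v)$, and conclude with the necessity part of Theorem \ref{matrixgapineq}.
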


\begin{theorem}
\label{variogr_unitprocess}
    A mapping $g: \X \times \X \to [0,2]$ is the semivariogram of a unit process in $\X$ if, and only if, it fulfills the following conditions:
    \begin{enumerate}
        \item Symmetry: $g(x,y)=g(y,x)$ for any $x,y \in \X$.
        \item Gap inequalities: for any positive integer $n$, any set of points $x_1,\ldots,x_n$ in $\X$ and any real matrix $\bLambda=[\lambda_{k\ell}]_{k,\ell=1}^n$, one has 
        \begin{equation}
        \label{matrixgap2}
            \sum_{k=1}^n \sum_{\ell=1}^n \lambda_{k\ell} \, g(x_k,x_\ell) \leq \eta(\bLambda,\{-1,1\}) = \sigma(\bLambda) - \gamma(\bLambda,\{-1,1\}),
        \end{equation}
        where $\sigma(\bLambda) = \sum_{k=1}^n \sum_{\ell=1}^n  \lambda_{k\ell}$.  
    \end{enumerate}
\end{theorem}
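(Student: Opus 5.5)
The plan is to reduce Theorem \ref{variogr_unitprocess} to Theorem \ref{variogr2} (equivalently Theorem \ref{variogr}) with $\cE=\{-1,1\}$, which is closed. Then only two things remain to check.

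\textbf{First, the identity for the $\eta$-gap.} For $\bz\in\{-1,1\}^n$ we have $z_k^2=1$, so
\begin{equation*}
\frac{1}{2}\sum_{k,\ell}\lambda_{k\ell}(z_k-z_\ell)^2=\frac{1}{2}\sum_{k,\ell}\lambda_{k\ell}(2-2z_kz_\ell)=\sigma(\bLambda)-\bz\bLambda\bz^\top .
\end{equation*}
Taking the supremum over $\bz\in\{-1,1\}^n$ gives
\begin{equation*}
\eta(\bLambda,\{-1,1\})=\sigma(\bLambda)-\inf_{\bz}\bz\bLambda\bz^\top=\sigma(\bLambda)-\gamma(\bLambda,\{-1,1\}).
\end{equation*}
The same identity also follows from Lemma \ref{gamma2eta}: there $\bz\bDelta\bz^\top=\sum_k\delta_{kk}=\sigma(\bLambda)$ because $z_k^2=1$.

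\textbf{Second, the "no drift" hypothesis and the matrix class.}

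\emph{No drift.} Theorem \ref{variogr} concerns random fields with no drift. A unit process may have a nonzero mean, so I handle this by symmetrization. Given any unit process $Z$, let $\epsilon$ be an independent uniform sign and set $Z'=\epsilon Z$. Then $Z'$ is still a unit process, its increments have zero expectation, and $(Z'(x)-Z'(y))^2=(Z(x)-Z(y))^2$. Consequently:
\begin{itemize}
\item the raw semivariogram $g(x,y)=\frac{1}{2}\E[(Z(x)-Z(y))^2]$ of any unit process coincides with the semivariogram of a no-drift unit process;
\item conversely, a no-drift unit process is in particular a unit process.
\end{itemize}
The class of semivariograms is therefore the same with or without the drift condition, and Theorem \ref{variogr} applies directly. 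For necessity, I could instead argue directly: for each realization, $\frac{1}{2}\sum\lambda_{k\ell}(Z(x_k)-Z(x_\ell))^2\le\eta(\bLambda,\{-1,1\})$, and taking expectations gives the inequality. Sufficiency comes from Theorem \ref{variogr}.

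\emph{Arbitrary real $\bLambda$.} Theorem \ref{variogr} uses symmetric $\bLambda$, while the statement allows any real $\bLambda$. If $g$ is symmetric, then $\langle\bLambda,\bG\rangle=\langle(\bLambda+\bLambda^\top)/2,\bG\rangle$. Likewise, the quantity $\frac{1}{2}\sum\lambda_{k\ell}(z_k-z_\ell)^2$ is unchanged when $\bLambda$ is replaced by its symmetric part, so $\eta$ is unchanged too. Hence the inequalities for general $\bLambda$ are equivalent to those for symmetric $\bLambda$.

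\emph{Codomain $[0,2]$.} Finally, the assumption that $g$ is valued in $[0,2]$ is harmless. It is implied by the gap inequalities: take $\bLambda$ with $\pm 1$ at a single off-diagonal pair, and note that $\eta$ is at most $2$ there. Since the values of $g$ are also consistent with this range for unit processes, the assumption plays no further role.

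The main obstacle, mild as it is, is the drift issue, which the sign symmetrization resolves. Everything else is the algebraic identity for $\eta$ above.
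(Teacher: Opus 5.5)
Your proof is correct. It follows the first route the paper indicates: reduce to Theorem~\ref{variogr} with $\cE=\{-1,1\}$, and use $z_k^2=1$ to get $\eta(\bLambda,\{-1,1\})=\sigma(\bLambda)-\gamma(\bLambda,\{-1,1\})$. Where you differ is in removing the drift, and your device is the sound one.

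The paper passes to $Z-\E(Z)$. This field is no longer $\{-1,1\}$-valued, so Theorem~\ref{variogr} with that codomain does not apply to it. It also lowers $\frac{1}{2}\E[(Z(x)-Z(y))^2]$ by $\frac{1}{2}(\E Z(x)-\E Z(y))^2$ when the mean is not constant. Multiplying by an independent fair sign preserves both the codomain and the squared increments, so it is what actually makes this route work.

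The paper's second route avoids the drift question altogether. For a unit process, $\frac{1}{2}\E[(Z(x)-Z(y))^2]=1-\rho(x,y)$. If $\bR$ is the all-ones matrix minus $\bG$, then $\langle\bLambda,\bR\rangle=\sigma(\bLambda)-\langle\bLambda,\bG\rangle$. Hence the stated inequalities are exactly the gap inequalities of Theorem~\ref{matrixgapineq} for $\rho=1-g$. That route rests on a theorem whose proof is written out, rather than on the sketched proof of Theorem~\ref{variogr}. Your route instead shows explicitly why the no-drift restriction costs nothing for a codomain symmetric about $0$.

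Your reading of ``semivariogram of a unit process'' as the raw quantity $\frac{1}{2}\E[(Z(x)-Z(y))^2]$ is also the one under which the statement holds. With the drift-corrected definition from the introduction, the necessity part fails. Take $Z(z)\equiv 1$, $Z(x)=\epsilon$ and $Z(y)=-\epsilon$, with $\epsilon$ a fair random sign. This gives $g(x,y)=2$ and $g(x,z)=g(z,y)=\frac{1}{2}$, which violates the triangle inequality. That inequality is itself a gap inequality: with $x_1=x$, $x_2=y$, $x_3=z$, take the symmetric matrix with $\lambda_{12}=\lambda_{21}=\frac{1}{2}$, $\lambda_{13}=\lambda_{31}=\lambda_{23}=\lambda_{32}=-\frac{1}{2}$ and zero diagonal. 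Its $\eta$-gap on $\{-1,1\}$ is $0$.
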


\bigskip

\begin{remark}
As a particular case of Theorem \ref{variogr_unitprocess}, if $\bLambda = \blambda^\top \, \blambda$ with $\blambda$ an $n$-dimensional row vector with entries $\lambda_1, \ldots,\lambda_n$, Eq. (\ref{matrixgap2}) becomes
\begin{equation}
        \label{matrixgap3}
            \sum_{k=1}^n \sum_{\ell=1}^n \lambda_{k\ell} \, g(x_k,x_\ell) \leq \sigma^2(\blambda) - \zeta^2(\blambda),
\end{equation}
where $\sigma(\blambda) = \sum_{k=1}^n \lambda_k$ and $\zeta(\blambda) = \inf \{ |\bz \blambda^\top|: \bz \in \{-1,1\}^n \}$ is the $\zeta$-gap of vector $\blambda$. Equivalently, the mapping $g/4$, which is the semivariogram of a random field valued in $\{0,1\}$, fulfills the gap inequalities defined by \cite{Laurent1996}. The latter inequalities imply many other well-known inequalities, in particular, the negative-type and hypermetric inequalities \citep{Galli2012}.\\
\end{remark}

\begin{theorem}[\citealp{emery2025}]
A mapping $g: \X \times \X \to \R$ is the semivariogram of a unit process with no drift in $\X$ if, and only if, it has the following representation:
\begin{equation*}
   g(x,y) = \frac{2}{\pi} \int_0^1 \arccos C_t(x,y) \, {\rm d}t,
\end{equation*}
where, for any $t \in [0,1]$, $C_t: \X \times \X \to [-1,1]$ is a symmetric positive semidefinite mapping that is equal to $1$ on the diagonal of $\X \times \X$.\\   

\end{theorem}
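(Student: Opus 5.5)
The plan is to prove both directions by linking the arc-cosine formula to Gaussian fields, through the sign transform and through a mixture over the parameter $t$. First I record what the semivariogram of a unit process is. If $Z$ is valued in $\{-1,1\}$, then $[Z(x)-Z(y)]^2 = 2 - 2Z(x)Z(y)$. Hence
\begin{equation*}
g(x,y) = 1-\rho(x,y) = 2\,\P\bigl(Z(x)\neq Z(y)\bigr).
\end{equation*}
So the claim amounts to characterizing the functions $x,y \mapsto 2\,\P(Z(x)\neq Z(y))$.

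\emph{Sufficiency.} Fix $t\in[0,1]$. Since $C_t$ is symmetric, positive semidefinite and equal to $1$ on the diagonal, the Daniell--Kolmogorov theorem (as recalled in the introduction) gives a zero-mean Gaussian field $Y_t$ with unit variances and correlation $C_t$. I set $Z_t=\mathrm{sign}(Y_t)$, which is valued in $\{-1,1\}$ almost surely. Sheppard's formula gives
\begin{equation*}
\P\bigl(Z_t(x)\neq Z_t(y)\bigr)=\frac{1}{\pi}\arccos C_t(x,y),
\end{equation*}
so the semivariogram of $Z_t$ is $\frac{2}{\pi}\arccos C_t$. Because $Y_t$ is symmetric in law, $\E Z_t(x)=0$, and therefore $Z_t$ has no drift. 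I then draw $T$ uniform on $[0,1]$, independent of everything else, and set $Z=Z_T$. This $Z$ is a unit process with zero mean. Its semivariogram is the average over $t$ of the semivariograms of the $Z_t$, which is exactly the integral in the statement. The only technical point is joint measurability in $(t,\omega)$. I would handle it by building all $Y_t$ from one white-noise source, or more simply by working with the finite-dimensional distributions of the mixture and checking their consistency before applying Kolmogorov's theorem again.

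\emph{Necessity.} The key observation is that a deterministic configuration $z\in\{-1,1\}^\X$ fits the representation with a constant kernel. Indeed $C(x,y)=z(x)z(y)$ is a rank-one positive semidefinite kernel with unit diagonal, and
\begin{equation*}
\frac{2}{\pi}\arccos\bigl(z(x)z(y)\bigr)=1-z(x)z(y)=\frac12\,[z(x)-z(y)]^2 .
\end{equation*}
For a general unit process $Z$ with law $\P$ on $\Omega=\{-1,1\}^\X$, I take $C_\omega(x,y)=Z(x,\omega)Z(y,\omega)$ and integrate over $\omega$. This represents $g$ as an average over $\omega$ of arc-cosines of admissible kernels. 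The remaining task is to reparametrize $(\Omega,\P)$ by $([0,1],\mathrm{d}t)$. When $\X$ is countable, $\Omega$ is a Polish space, so there is a measurable map $\tau:[0,1]\to\Omega$ whose image of Lebesgue measure is $\P$; setting $C_t=C_{\tau(t)}$ then gives the formula.

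The main obstacle is uncountable $\X$, where $\Omega$ is no longer standard Borel and such a map $\tau$ need not exist. My first attempt would be a closure argument. The set of representable $g$ is convex: to combine two representations, rescale each onto a subinterval of $[0,1]$. By the previous step it contains all cut functions $1-z(x)z(y)$. Every unit-process semivariogram lies in the pointwise-closed convex hull of the cut functions, since on any finite set of points it is a finite convex combination of them. So it would suffice to show that the representable class is closed under pointwise limits. For this I would apply a compactness argument to the family $t\mapsto C_t$, viewed as a point of a product of compact sets of correlation kernels, extracting a limit in law via the Skorokhod representation on the finite-dimensional marginals. If this fails, the fallback is to read $\int_0^1\,\mathrm{d}t$ as integration against a general probability space, in which case the $\omega$-representation above already finishes the proof.
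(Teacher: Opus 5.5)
Your sufficiency argument is correct. It is the same construction the paper uses in the sufficiency part of Theorem~\ref{th:boundedinterval20}: a Gaussian field obtained from Daniell--Kolmogorov, the sign transform with Sheppard's arccos formula, and an independent uniform $T$. Building the mixture through its finite-dimensional distributions handles measurability, because orthant probabilities depend continuously on the correlation matrix. For this statement the paper gives no proof of its own and defers to \cite{emery2025}. Your rank-one identity $\frac{2}{\pi}\arccos(z(x)z(y))=\frac12[z(x)-z(y)]^2$ is correct, and so is the pushforward by a Borel map $\tau:[0,1]\to\{-1,1\}^\X$. Together they prove necessity when $\X$ is countable. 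The same reasoning extends, via a countable dense subset and Borel--Cantelli, to the case where $\X$ is separable for $d(x,y)=\P(Z(x)\neq Z(y))$.

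The genuine gap is necessity for uncountable $\X$, which is the case the paper has in mind (planes, spheres). Reducing to pointwise closedness of the representable class is legitimate, but that closedness is the whole difficulty, and your sketch does not deliver it. There are two ways to read the limit you propose, and neither works.
\begin{itemize}
\item A product-topology limit of a net $t\mapsto C^\alpha_t$ need not be measurable in $t$, and dominated convergence does not hold for nets.
\item A limit in law of the random kernels $C^\alpha_T$ does exist by compactness. However, Skorokhod's theorem needs a separable metric state space, which $[-1,1]^{\X\times\X}$ with the product topology is not when $\X$ is uncountable. Couplings built separately for each finite set of points cannot be glued into one family indexed by $t\in[0,1]$.
\end{itemize}
Worse, the limit law need not be realizable on $[0,1]$ at all. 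For finite $F\subset\X$, let $C^F_t=z_t\otimes z_t$, where $(z_t(x))_{x\in F}$ realizes i.i.d.\ fair signs on $[0,1]$ and $z_t\equiv 1$ off $F$. As $F\uparrow\X$, the limit in law is $z\otimes z$ with i.i.d.\ fair signs on all of $\X$. Realizing it on $[0,1]$ would make $\{t\mapsto C_t(x_0,x)\}_{x\neq x_0}$ an uncountable orthonormal family in the separable space $L^2[0,1]$, which is impossible. The limiting semivariogram, equal to $1$ off the diagonal, is still representable, but only by a different kernel, $C_t\equiv\delta$ (white-noise correlation), not as a limit of the given ones. So a proof for uncountable $\X$ must re-choose the kernels and use non-rank-one Gaussian correlations to absorb the non-separable part of the law. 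Your argument contains no mechanism for this. Your fallback, reading $\int_0^1{\rm d}t$ as integration over an arbitrary probability space, proves a weaker statement than the one claimed, which specifies Lebesgue measure on $[0,1]$.
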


\subsection{Random fields valued in a bounded and closed interval}

\begin{theorem}
\label{th:boundedinterval}
Necessary, but not sufficient, conditions for a mapping $\rho: \X \times \X \to \R$ to be the non-centered covariance of a random field on $\X$ with values in $[-1,1]$ are:
    \begin{enumerate}
        \item Symmetry: $\rho(x,y)=\rho(y,x)$ for any $x,y \in \X$.
        \item Boundedness: $\rho(x,y) \in [-1,1]$ for any $x,y \in \X$.
        \item Positive semidefiniteness: for any positive integer $n$, any set of points $x_1,\ldots,x_n$ in $\X$ and any set of real numbers $\lambda_1,\ldots,\lambda_n$, one has 
        \begin{equation}
        \label{PSD3}
            \sum_{k=1}^n \sum_{\ell=1}^n \lambda_{k} \, \lambda_{\ell} \, \rho(x_k,x_\ell) \geq 0.
        \end{equation}
    \end{enumerate}   
\end{theorem}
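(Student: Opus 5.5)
Two things have to be established for Theorem \ref{th:boundedinterval}: that the three conditions are necessary, and that together they are not sufficient.

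\textbf{Necessity.} Let $Z$ be a random field valued in $[-1,1]$ with non-centered covariance $\rho$.
\begin{itemize}
\item Symmetry holds because $Z(x)Z(y)=Z(y)Z(x)$ pointwise.
\item Boundedness holds because $|Z(x)Z(y)|\le 1$ almost surely, and taking expectations gives $\rho(x,y)\in[-1,1]$.
\item Positive semidefiniteness follows from $\sum_{k,\ell}\lambda_k\lambda_\ell\rho(x_k,x_\ell)=\E\big[(\sum_k\lambda_k Z(x_k))^2\big]\ge 0$.
\end{itemize}
Equivalently, Theorem \ref{matrixgapineq} applies with $\cE=[-1,1]$ closed. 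Taking $\bLambda=\blambda^\top\blambda$ and using Lemma \ref{lem:negeig0} (the gap is $0$ since $0\in\cE$) gives positive semidefiniteness. Taking $\bLambda=\pm E_{k\ell}$, where $E_{k\ell}$ is the matrix with a single nonzero entry equal to $1$ in position $(k,\ell)$, has gap $\inf\{\pm z_kz_\ell\}=-1$, which gives boundedness.

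\textbf{Non-sufficiency.} I will exhibit a counterexample on a finite set of points, based on Theorem \ref{thMcMillan}. If $\rho$ were the non-centered covariance of a $[-1,1]$-valued field, then $\rho^*$, obtained by setting the diagonal to $1$, would be the covariance of a unit process. Hence $\rho^*$ would satisfy the corner-positive inequalities of Theorem \ref{th:CP}, in particular the triangle-type inequality obtained from $\bLambda=\boldsymbol{1}^\top\boldsymbol{1}$ with $n=3$. The gap of this matrix on $\{-1,1\}$ is $\min(z_1+z_2+z_3)^2=1$, so the inequality reads $3+2\sum_{k<\ell}\rho(x_k,x_\ell)\ge 1$, that is, $\sum_{k<\ell}\rho(x_k,x_\ell)\ge -1$.

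So I take three points and set $\rho(x_k,x_k)=1$ and $\rho(x_k,x_\ell)=-1/2$ for $k\ne\ell$. This matrix is symmetric, bounded in $[-1,1]$, and positive semidefinite, with eigenvalues $0,\tfrac32,\tfrac32$. Its off-diagonal sum is exactly $-3/2<-1$, so the inequality fails.

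The diagonal already equals $1$, so $\rho^*=\rho$ and Theorem \ref{thMcMillan} is not even needed: one can invoke Theorem \ref{matrixgapineq} directly with $\cE=[-1,1]$ and $\bLambda=\boldsymbol{1}^\top\boldsymbol{1}$. On $[-1,1]$ this $\bLambda$ has gap $0$, not $1$, so a genuinely $\{-1,1\}$-type constraint is needed. The argument then runs as follows.
\begin{itemize}
\item Since $\rho(x,x)=1=\E Z(x)^2$ with $|Z|\le1$, we get $Z(x)\in\{-1,1\}$ almost surely, so $Z$ is in fact a unit process.
\item Among three $\pm1$ values at least two coincide, so $\sum_{k<\ell}Z_kZ_\ell\ge -1$ pointwise.
\item Taking expectations gives $\sum_{k<\ell}\rho(x_k,x_\ell)\ge-1$, contradicting $-3/2$.
\end{itemize}
For an infinite $\X$, I extend $\rho$ by any valid covariance on the remaining points, for instance $\rho=0$ off the three points.

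The main obstacle is choosing the counterexample so that positive semidefiniteness holds while some gap inequality fails. The equicorrelated $-1/2$ matrix sits exactly at the positive-semidefinite boundary, which makes it the natural first choice.
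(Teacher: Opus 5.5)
Your proof is correct. The necessity part coincides with the paper's. For non-sufficiency you take a different and more elementary route.

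The paper's proof gives no counterexample. It notes that sufficiency would require $\langle \bLambda,\bR\rangle \le \sup\{\bz\bLambda\bz^\top:\bz\in[-1,1]^n\}$ for every symmetric $\bLambda$, and defers to Megretski (2001, Section 2.3.1) for the failure of this implication. The discussion after the theorem then points to McMillan's stationary example on $\Z$, to the cosine covariance, and to any correlation function that is not a unit-process covariance. Your equicorrelated three-point matrix is an explicit, hand-checkable instance of that last remark, so your argument is self-contained where the paper's is a citation.

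What the citation buys in return is the paper's stronger claim that shrinking the range to $[-a,a]$ does not help either. Your matrix rescaled by $d$ remains a counterexample only for $d>2/3$: at $d=2/3$ it is the covariance of a uniformly random permutation of $(1,-1,0)$. Moreover, for a fixed number $n$ of points, every positive semidefinite matrix with entries in $[-1/n,1/n]$ is realizable, since its trace is at most $1$ and its spectral decomposition then gives a mixture of $\bz^\top\bz$'s with $\bz\in[-1,1]^n$. So that claim needs arbitrarily many points, together with the fact that the ratio between the semidefinite relaxation and the maximum of an indefinite quadratic form over the cube is unbounded in $n$. This is what Megretski's result supplies.

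Two small remarks on the write-up.

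First, your sentence proposing Theorem~\ref{matrixgapineq} with $\boldsymbol{1}^\top\boldsymbol{1}$ and then retracting it is easily repaired with the zero-diagonal matrix $\bLambda=\boldsymbol{1}^\top\boldsymbol{1}-\boldsymbol{I}$. Since $\bz\bLambda\bz^\top=2\sum_{k<\ell}z_kz_\ell$ is affine in each $z_k$, its infimum over $[-1,1]^3$ is attained at a vertex and equals $-2$, whereas $\langle\bLambda,\bR\rangle=-3$. This also shows your reduction to a unit process is unnecessary: $\sum_{k<\ell}\rho(x_k,x_\ell)\ge-1$ holds for every $[-1,1]$-valued field, whatever its diagonal.

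Second, your example needs $\lvert\X\rvert\ge 3$. When $\X$ has at most two points the three conditions are in fact sufficient: take $Z(x_k)=\sqrt{\rho(x_k,x_k)}\,U_k$ with $U_1,U_2$ a pair of $\pm1$ variables with the appropriate correlation. So ``not sufficient'' must be read, as in the paper, as not sufficient for general $\X$.
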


The statement of Theorem \ref{th:boundedinterval} is somehow perturbing, as it implies that, given a symmetric positive semidefinite function (even a bounded one), there may not exist a bounded random field with this function as its non-centered covariance. An example is given by \cite{McMillan1955}: for $\sigma \in (\frac{2\sqrt{2}}{\pi},1]$ and $\theta \in \R\smallsetminus \mathbb{Q}$, the mapping $\rho: \Z \times \Z \to [-1,1]$ defined by
\begin{equation*}
    \rho(x,y) = \begin{cases}
        \sigma^2\cos(2\pi \theta (x-y)) \text{ if $x \neq y$}\\
        1 \text{ otherwise},
    \end{cases}
\end{equation*}
is symmetric, bounded and positive semidefinite, but is not the covariance function of any random field on $\Z$ valued in $[-1,1]$. Simpler examples are the pure cosine covariance ($\sigma=1$ and $\theta \in \R$) in $\R \times \R$ and, more generally, any correlation function $\rho$ in $\X \times \X$ that does not belong to the set of unit process covariance functions (see previous section): the fact that it is equal to $1$ on the diagonal of $\X \times \X$ implies that a random field valued in $[-1,1]$ having $\rho$ as its covariance would necessarily be a unit process, but such processes do not admit stationary covariance functions that are smooth at the origin \citep{Matheron1989}. \\

A necessary and sufficient condition is given in the next theorem.\\

\begin{theorem}
\label{th:boundedinterval20}
A necessary and sufficient condition for a mapping $\rho: \X \times \X \to \R$ to be the non-centered covariance of a random field on $\X$ with values in $[-1,1]$ is to be of the form
\begin{equation}
\label{sufficientrho10}
    \rho(x,y) = \frac{1}{2\pi} \int_{-1}^1 \int_{-1}^1 \int_{0}^1  \arcsin C_{t}((x,u),(y,v)) {\rm d}t \, {\rm d}u \, {\rm d}v, \quad x, y \in \X,
\end{equation}
where, for any $t \in [0,1]$, $C_t: (\X \times [-1,1])^2\to [-1,1]$ is a symmetric positive semidefinite mapping that is equal to $1$ on the diagonal of $(\X \times [-1,1])^2$.\\   
\end{theorem}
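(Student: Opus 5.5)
The plan is to reduce the $[-1,1]$-valued case to the unit-process case through the layer-cake identity
\[
z=\frac12\int_{-1}^1 \operatorname{sign}(z-u)\,{\rm d}u,\qquad z\in[-1,1],
\]
with the convention $\operatorname{sign}(0)=1$, which is harmless because ties have Lebesgue measure zero in $u$. I would then use the characterization of unit-process semivariograms by \cite{emery2025} stated just above. I also use the identity $\arccos=\frac{\pi}{2}-\arcsin$ on $[-1,1]$.

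\emph{Necessity.} Let $Z$ be valued in $[-1,1]$. Put $Y(x,u)=\varepsilon\,\operatorname{sign}(Z(x)-u)$ on $\X\times[-1,1]$, where $\varepsilon$ is a Rademacher sign independent of $Z$. Then $Y$ is a unit process, and because of $\varepsilon$ it has zero mean, hence no drift. By construction $Z(x)=\frac{\varepsilon}{2}\int_{-1}^1 Y(x,u)\,{\rm d}u$. The map $(u,\omega)\mapsto Y(x,u,\omega)$ is jointly measurable, so Fubini gives
\[
\rho(x,y)=\frac14\int_{-1}^1\int_{-1}^1 \rho_Y((x,u),(y,v))\,{\rm d}u\,{\rm d}v .
\]
The semivariogram of $Y$ is $g_Y=1-\rho_Y$. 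By the theorem of \cite{emery2025}, applied with index set $\X\times[-1,1]$, one has $g_Y=\frac{2}{\pi}\int_0^1\arccos C_t\,{\rm d}t$ for symmetric positive semidefinite $C_t$ with unit diagonal. Therefore $\rho_Y=\frac{2}{\pi}\int_0^1\arcsin C_t\,{\rm d}t$. Substituting this into the display above yields (\ref{sufficientrho10}) with the constant $\frac14\cdot\frac{2}{\pi}=\frac{1}{2\pi}$. Measurability of the integrand in $(u,v)$ is inherited from $\rho_Y$.

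\emph{Sufficiency.} Given the family $C_t$, I first build a unit process with covariance $\rho_Y=\frac{2}{\pi}\int_0^1\arcsin C_t\,{\rm d}t$. Draw $T$ uniform on $[0,1]$. Conditionally on $T=t$, let $W_t$ be a zero-mean Gaussian field with covariance $C_t$, and set $Y=\operatorname{sign}(W_T)$. Sheppard's arcsine law gives the covariance $\frac{2}{\pi}\arcsin C_t$ for each fixed $t$, and mixing over $T$ gives $\rho_Y$.

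I would avoid needing $Y$ to be measurable in $u$ by randomizing the integration variable instead. Let $U_1,\ldots,U_m$ be i.i.d.\ uniform on $[-1,1]$, independent of $Y$, and set $Z_m(x)=\frac1m\sum_{i=1}^m Y(x,U_i)\in[-1,1]$. For $i\neq j$ each cross term has expectation $\frac14\int\!\!\int\rho_Y\,{\rm d}u\,{\rm d}v=\rho(x,y)$. The $m$ diagonal terms $i=j$ are bounded by $1$, so the covariance of $Z_m$ equals $\rho(x,y)+O(1/m)$. Hence $\rho$ is a pointwise limit of non-centered covariances of $[-1,1]$-valued fields.

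Each such covariance satisfies symmetry and the gap inequalities of Theorem \ref{matrixgapineq}. Those inequalities involve only finitely many values and a fixed right-hand side $\gamma(\bLambda,[-1,1])$, so they survive pointwise limits. Since $[-1,1]$ is closed, Theorem \ref{matrixgapineq} then shows that $\rho$ itself is a valid covariance.

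\emph{Main obstacle.} The delicate step is the rigorous construction of $Z_m$: one must define the joint finite-dimensional laws of $(Y(x_k,U_i))_{k,i}$ as a mixture over $(t,u_1,\ldots,u_m)$. I would do this by writing each finite-dimensional law as the law of the sign vector of a Gaussian vector. This law depends continuously on the covariance matrix built from $C_t$, hence measurably on the parameters, provided $(t,u,v)\mapsto C_t((x,u),(y,v))$ is jointly measurable. That measurability is implicit in (\ref{sufficientrho10}) being well defined. Consistency of the mixed laws then holds, and Kolmogorov's extension theorem yields $Z_m$.
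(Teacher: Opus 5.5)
Your proposal is correct. For necessity it follows the paper's route; for sufficiency it takes a genuinely different one.

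\textbf{Necessity.} This is essentially the paper's argument: decompose $Z$ through thresholds at level $u$, then apply the arccos representation of \cite{emery2025} on the enlarged index set $\X\times[-1,1]$. The paper works with the indicators $\mathsf{1}_{Z(x,\cdot)>u}$. Their means depend on $u$, so the paper appeals only to arguments from Appendix C of that reference. Your independent Rademacher factor produces a drift-free unit process, so the cited theorem applies exactly as stated. The identity $\arccos=\frac{\pi}{2}-\arcsin$ then replaces the paper's bookkeeping of first-moment terms.

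\textbf{Sufficiency.} Here the routes differ. The paper builds the field explicitly as $Y_T(x)=\int_{-1}^1(\mathsf{1}_{Z_T((x,u),\cdot)>0}-\frac12)\,{\rm d}u$. This is direct and yields a zero-mean field whose covariance is exactly $\rho$. However, it tacitly requires a version of the Gaussian field that is jointly measurable in $u$, and for some admissible kernels none exists. An example is $C_t((x,u),(y,v))=1$ if $(x,u)=(y,v)$ and $0$ otherwise: the formula gives $\rho\equiv 0$, but the field is white noise in $u$.

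Your averaging over i.i.d.\ uniform thresholds never integrates a random field in $u$, so it avoids this problem. The price is twofold:
\begin{enumerate}
\item You only obtain covariances $\rho+O(1/m)$. You must therefore close the argument with Theorem~\ref{matrixgapineq}, whose gap inequalities survive pointwise limits because $\gamma(\bLambda,[-1,1])$ is finite.
\item You need joint measurability of $(t,u,v)\mapsto C_t((x,u),(y,v))$. This is in any case the natural reading of the triple integral.
\end{enumerate}

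If you want a constructive ending instead, build the $\{-1,1\}$-valued family $Y(x,U_i)$ for all $i\geq 1$ by the same Kolmogorov construction. It is exchangeable in $i$, so $\frac1m\sum_{i\le m}Y(x,U_i)$ converges almost surely. By bounded convergence, the limit is a $[-1,1]$-valued field with covariance exactly $\rho$.
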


\begin{example}
    Let $a \in [0,1]$ and consider the separable covariance, independent of $t$:
    \begin{equation*}
        C_t((x,u),(y,v))= C(x,y) \, C^\prime(u,v)
    \end{equation*}
    with 
    \begin{equation*}
       C^\prime(u,v) = \begin{cases}
           1 \text{ if $u=v$}\\
           a \text{ otherwise.}
       \end{cases} 
    \end{equation*}
    The representation (\ref{sufficientrho10}) leads to the following mapping:
    \begin{equation*}
        \rho_a(x,y) = \frac{2}{\pi} \arcsin (a C(x,y)), \quad x, y \in \X,
    \end{equation*}
    which is the non-centered covariance of a random field valued in $[-1,1]$. For instance, if $a=\frac{1}{2}$, it is known that $\rho_a$ is the covariance of the $[-1,1]$-uniform transform of a standard Gaussian random field with covariance $C$ \citep{Sondhi1983}, while for $a=1$, it is the non-centered covariance of a unit process \citep{McMillan1955}.\\
\end{example}

\subsection{Random fields valued in $\R_{\geq 0}$}

\begin{definition}
\label{coposit}
    A real symmetric matrix $\bLambda$ is said to be copositive if $\gamma(\bLambda,\R_{\geq 0}) \geq 0$ \citep[Definition 1.1]{Hiriart2010}. \\ 
\end{definition}

\begin{definition}
    A real symmetric matrix $\bR$ is said to be completely positive if it can be factorized as $\bR=\bB \bB^\top$, where $\bB$ is a (not necessarily square) matrix with non-negative entries \citep{Hall1963}. \\ 
\end{definition}

\begin{definition}
    A real symmetric matrix $\bR$ is said to be doubly non-negative if it is positive semidefinite and has nonnegative entries. \\ 
\end{definition}

\begin{theorem}
    \label{complpos}
Necessary and sufficient conditions for a mapping $\rho: \X \times \X \to \R$ to be the non-centered covariance of a random field on $\X$ valued in ${\R_{\geq 0}}$ are:
    \begin{enumerate}
        \item Symmetry: $\rho(x,y)=\rho(y,x)$ for any $x,y \in \X$.
        \item Complete positivity: for any positive integer $n$ and any set of points $x_1,\ldots,x_n$ in $\X$, $\bR =[\rho(x_{k},x_{\ell})]_{k,\ell=1}^n$ is completely positive. Equivalently, for any copositive matrix $\bLambda =[\lambda_{k\ell}]_{k,\ell=1}^n$, one has 
        \begin{equation*}
            \langle \bLambda, \bR\rangle=\sum_{k=1}^n \sum_{\ell=1}^n \lambda_{k\ell} \, \rho(x_k,x_\ell) \geq 0.
        \end{equation*}\\
    \end{enumerate}
    
\end{theorem}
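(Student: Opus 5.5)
The plan is to reduce this to Theorem \ref{matrixgapineq} with $\cE=\R_{\geq 0}$, which is closed. It then suffices to show that, for symmetric $\bLambda$, the gap inequalities $\langle \bLambda,\bR\rangle \geq \gamma(\bLambda,\R_{\geq 0})$ for all $\bLambda$ are equivalent to each finite matrix $\bR$ being completely positive. By Lemma \ref{lem:negeig1}, $\gamma(\bLambda,\R_{\geq 0})$ is either $0$ or $-\infty$. It equals $0$ exactly when $\bz\bLambda\bz^\top\geq 0$ for all $\bz\in\R_{\geq 0}^n$, that is, when $\bLambda$ is copositive in the sense of Definition \ref{coposit}. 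When it equals $-\infty$, the gap inequality holds trivially. Hence the gap inequalities of Theorem \ref{matrixgapineq}, restricted to symmetric matrices as allowed by its final clause, reduce exactly to $\langle \bLambda,\bR\rangle\geq 0$ for every copositive $\bLambda$. This already proves the second (``equivalently'') formulation of condition 2.

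The remaining step is to show that this dual condition is equivalent to complete positivity of $\bR$. That is the classical duality between the copositive cone $\mathcal{COP}_n$ and the completely positive cone $\mathcal{CP}_n$ in the space of symmetric matrices with the trace inner product.

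One direction is immediate. If $\bR=\bB\bB^\top=\sum_j \boldsymbol{b}_j^\top\boldsymbol{b}_j$ with nonnegative rows $\boldsymbol{b}_j$, then $\langle\bLambda,\bR\rangle=\sum_j \boldsymbol{b}_j\bLambda\boldsymbol{b}_j^\top\geq 0$ for copositive $\bLambda$.

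For the converse, I would show that $\mathcal{CP}_n$, the convex cone generated by the matrices $\bz^\top\bz$ with $\bz\in\R_{\geq0}^n$, is closed. This is the main obstacle, and I would handle it as follows. Normalize the generators by $\|\bz\|_1=1$. Their set is the continuous image of the simplex, hence compact, and it does not contain $0$. By Carath\'eodory's theorem, every element of the cone is a sum of at most $n(n+1)/2$ generators scaled by nonnegative coefficients. For a convergent sequence in the cone, the coefficients are bounded, because pairing with the identity matrix gives $\text{tr}(\bz^\top\bz)\geq 1/n$ for normalized $\bz$. Extracting convergent subsequences of the coefficients and of the generators then gives closedness.

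With closedness in hand, suppose $\bR\notin\mathcal{CP}_n$. The separating hyperplane theorem gives a symmetric $\bLambda$ with $\langle\bLambda,\bR\rangle<0$ and $\langle\bLambda,\bz^\top\bz\rangle\geq0$ for all $\bz\geq 0$. Such a $\bLambda$ is copositive, which contradicts the dual condition. So $\bR$ is completely positive.

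Combining the two parts: $\rho$ is realizable with values in $\R_{\geq 0}$ if and only if it is symmetric and every finite matrix $\bR$ lies in $\mathcal{COP}_n^*=\mathcal{CP}_n$. As a sanity check on necessity, I would also verify it directly. Complete positivity of $\bR=\E(\bZ^\top\bZ)$ follows since $\bR$ is a limit of convex combinations of the matrices $\bz^\top\bz$ with $\bz\geq0$, and the cone is closed. This direct argument is consistent with the route through Theorem \ref{matrixgapineq}.
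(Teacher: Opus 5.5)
Your reduction is the paper's own. Its proof of this theorem just refers to Case~2 of the proof of Theorem~\ref{matrixgapineq}, which, as you do, uses Lemma~\ref{lem:negeig1} to reduce the gap inequalities to $\langle\bLambda,\bR\rangle\ge 0$ for copositive $\bLambda$, and then cites Hall for the duality between the copositive and completely positive cones. Your closedness argument (normalized generators, Carath\'eodory, the bound $\mathrm{tr}(\bz^\top\bz)\ge 1/n$) together with the separation step proves that duality correctly. It is not a formality, because closedness of the convex hull of $\{\bz^\top\bz:\bz\in\cE^n\}$ holds only in the conic case. For instance, with $\cE=[1,\infty)$ the realizable matrices $(1-a^{-2})\,(1,1)^\top(1,1)+a^{-2}\,(a,1)^\top(a,1)$ converge, as $a\to\infty$, to the matrix with diagonal $(2,1)$ and off-diagonal entry $1$. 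No $[1,\infty)^2$-valued vector realizes that limit. For finite $\X$ your proof is complete.

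The genuine gap is the step you delegate to Theorem~\ref{matrixgapineq}: passing from complete positivity of every finite $\bR$ to a single random field on all of $\X$. The paper justifies this by saying the finite-dimensional laws built from the factorizations are ``consistent''. That does not work. Factorizations are not unique and are chosen separately for each finite set, so the law chosen for $\{x_1,\ldots,x_{n-1}\}$ need not be the marginal of the one chosen for $\{x_1,\ldots,x_n\}$, and Kolmogorov's theorem cannot be invoked.

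In fact no argument can close this in general. Let $\X$ be uncountable and set $\rho(x,y)=1$ if $x=y$ and $0$ otherwise. For distinct points $\bR$ is an identity matrix (with repeated points, $\bR=\bB\bB^\top$ with $\bB$ a $0/1$ incidence matrix), hence completely positive. Now suppose a field $Z\ge 0$ had this covariance. Then $Z(x)Z(y)=0$ a.s. for $x\ne y$, so the events $A_x=\{Z(x)>0\}$ are pairwise a.s. disjoint. Each has positive probability, since $\E(Z(x)^2)=1$. Hence $\sum_{i=1}^k\P(A_{x_i})\le 1$ for any distinct $x_1,\ldots,x_k$, so only countably many $x$ can have $\P(A_x)>0$, a contradiction.

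The theorem therefore needs $\X$ countable, and in that case you can close the gap by compactness:
\begin{itemize}
\item Pick $w_i>0$ with $C=\sum_i w_i\rho(x_i,x_i)<\infty$, and let $B=\{u\in\R_{\ge 0}^{\X}:\sum_i w_iu_i^2\le 1\}$. This set is compact in the product topology, and each $u\mapsto u_iu_k$ is continuous on it.
\item For a finite $F\subset\X$, take a factorization $\bR_F=\sum_m\bz_m^\top\bz_m$ with nonzero $\bz_m$, extended by zero off $F$. Set $N_m=\sum_{x_i\in F}w_iz_{m,i}^2$ and $\nu_F=\sum_m N_m\,\delta_{\bz_m/\sqrt{N_m}}$. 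This is a measure on $B$ of mass at most $C$ with $\int u_iu_k\,{\rm d}\nu_F=\rho(x_i,x_k)$ for $x_i,x_k\in F$.
\item The finite-intersection property in the weak-$*$ compact set of measures of mass at most $C$ then gives a $\nu$ matching $\rho$ everywhere.
\item Finally, $Z=\sqrt{\nu(B)}\,u$ under $\nu/\nu(B)$ is the required field (take $Z\equiv 0$ if $\nu=0$).
\end{itemize}
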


\begin{remark}
    The set of completely positive kernels is a closed convex cone that is an infinite dimensional analog of the cone of completely positive matrices of finite order. For topological descriptions of this cone, the reader is referred to \cite{Dobre2016} and \cite{Burgdorf2017}.\\  
\end{remark}

\begin{theorem}
    \label{Rplus1}
    Necessary conditions for a mapping $\rho: \X \times \X \to \R$ to be the non-centered covariance of a random field on $\X$ valued in ${\R_{\geq 0}}$ are:
    \begin{enumerate}
        \item Symmetry: $\rho(x,y)=\rho(y,x)$ for any $x,y \in \X$.
        \item Non-negativity: $\rho(x,y) \geq 0$ for any $x,y \in \X$.
        \item Positive semidefiniteness: for any positive integer $n$, any set of points $x_1,\ldots,x_n$ in $\X$ and any set of real numbers $\lambda_1,\ldots,\lambda_n$, one has 
        \begin{equation}
        \label{PSD2}
            \sum_{k=1}^n \sum_{\ell=1}^n \lambda_{k} \, \lambda_{\ell} \, \rho(x_k,x_\ell) \geq 0.
        \end{equation}
    \end{enumerate}
    These conditions are sufficient only if $\X$ contains at most four points.\\
\end{theorem}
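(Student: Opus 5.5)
Necessity is immediate. If $Z\ge 0$ has non-centered covariance $\rho$, then $\rho$ is symmetric, and $\rho(x,y)=\E(Z(x)Z(y))\ge 0$. Positive semidefiniteness holds because $\sum_{k,\ell}\lambda_k\lambda_\ell\rho(x_k,x_\ell)=\E\big(\sum_k\lambda_k Z(x_k)\big)^2\ge 0$. Equivalently, by Theorem \ref{complpos} every $\bR$ is completely positive, and completely positive matrices are doubly non-negative. The substance of the statement is therefore the sufficiency claim: doubly non-negative implies realizable if and only if $\X$ has at most four points.

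For the positive direction, I take $\X=\{x_1,\dots,x_n\}$ with $n\le 4$. Then $\bR=[\rho(x_k,x_\ell)]$ is a doubly non-negative matrix of order at most $4$. I would invoke the classical result of Maxfield and Minc (see also Berman and Shaked-Monderer) that every doubly non-negative matrix of order $n\le 4$ is completely positive. By Theorem \ref{complpos}, $\rho$ is then a valid non-centered covariance on $\X$. If one wants an explicit field, write $\bR=\bB\bB^\top$ with $\bB\ge 0$ of size $n\times m$. Let $Z(x_k)=\sqrt{m}\,b_{kJ}$, where $J$ is uniform on $\{1,\dots,m\}$. Then $\E(Z(x_k)Z(x_\ell))=\sum_j b_{kj}b_{\ell j}=r_{k\ell}$ and $Z\ge 0$. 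The main obstacle is the Maxfield--Minc theorem itself. If I had to prove it rather than cite it, I would argue by induction on the order, peeling off a non-negative rank-one term $\bb\bb^\top$ that preserves double non-negativity and reduces the order or creates a zero. For $n\le 4$ the argument closes via a case analysis on the zero pattern of $\bR$ and of its inverse. This is delicate, and I would prefer to cite it.

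For the negative direction, "only if" reads as: for any $\X$ with at least five points, the conditions fail to be sufficient. I would use the standard $5\times 5$ doubly non-negative matrix that is not completely positive, for instance
\begin{equation*}
\bR_5=\begin{pmatrix}1&1&0&0&1\\1&2&1&0&0\\0&1&2&1&0\\0&0&1&2&1\\1&0&0&1&6\end{pmatrix}.
\end{equation*}
This matrix is PSD with non-negative entries, but it is not completely positive, which one certifies by exhibiting a copositive matrix $\bLambda$ (e.g.\ the Horn matrix) with $\langle\bLambda,\bR_5\rangle<0$. I would double-check this numerically or replace it with a known textbook example. Pick five distinct points $x_1,\dots,x_5\in\X$ and set $\rho(x_k,x_\ell)=(\bR_5)_{k\ell}$. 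Extend $\rho$ to the remaining points by $\rho(x,y)=\delta(x,y)$ whenever $x$ or $y$ lies outside $\{x_1,\dots,x_5\}$, i.e.\ as a block-diagonal sum with an identity. This extension is symmetric, non-negative and positive semidefinite. However, its restriction to the five points fails complete positivity, so by Theorem \ref{complpos} the extension is not the covariance of any $\R_{\ge 0}$-valued field.
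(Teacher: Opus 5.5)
Your proposal is correct and follows the paper's route: direct necessity, Maxfield--Minc for order at most four, and a doubly non-negative but not completely positive $5\times 5$ matrix when $\X$ has five or more points. It is in fact more explicit than the paper, which only cites the existence of such matrices and neither writes one down nor extends it to a larger $\X$.

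One small correction to your certificate. The unscaled Horn matrix $H$, with $-1$ at cyclically adjacent positions, gives $\langle H,\bR_5\rangle = 13-10 = 3>0$, so it does not separate $\bR_5$ from the completely positive cone. Instead use the copositive matrix $\bLambda = DHD$ with $D=\mathrm{diag}(5,4,3,2,1)$. This gives $\langle \bLambda,\bR_5\rangle = 89-90=-1<0$, which proves that $\bR_5$ is not completely positive.
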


\begin{theorem}
    \label{R0}
    Sufficient conditions for a mapping $\rho: \X \times \X \to \R$ to be the non-centered covariance of a random field on $\X$ valued in $\R_{\geq 0}$ are:
    \begin{enumerate}
        \item Symmetry: $\rho(x,y)=\rho(y,x)$ for any $x,y \in \X$.
        \item Positivity: $\rho(x,y) > 0$ for any $x,y \in \X$.
        \item Log-positive semidefiniteness: for any positive integer $n$, any set of points $x_1,\ldots,x_n$ in $\X$ and any set of real numbers $\lambda_1,\ldots,\lambda_n$, one has 
        \begin{equation}
        \label{PSD200}
            \sum_{k=1}^n \sum_{\ell=1}^n \lambda_{k} \, \lambda_{\ell} \, \ln(\rho(x_k,x_\ell)) \geq 0.\\
        \end{equation}
    \end{enumerate}
\end{theorem}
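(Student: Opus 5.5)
The natural route is to build the random field explicitly from a log-Gaussian construction. Set $C(x,y) := \ln \rho(x,y)$, which is well defined because $\rho$ is strictly positive by the positivity condition. By symmetry of $\rho$ and the log-positive semidefiniteness inequality (\ref{PSD200}), $C$ is a symmetric positive semidefinite mapping on $\X \times \X$. By the Daniell--Kolmogorov argument recalled in the introduction (equivalently, Theorem \ref{th:PSD} with $\cE=\R$), there exists a zero-mean Gaussian random field $Y$ on $\X$ with covariance function $C$, i.e. $\E(Y(x)Y(y)) = C(x,y)$.

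The candidate field is then
\[
Z(x) = \exp\!\left(Y(x) - \tfrac{1}{2} C(x,x)\right), \quad x \in \X .
\]
It is valued in $\R_{>0} \subset \R_{\geq 0}$, and each $Z(x)$ is square integrable because $Y(x)$ is Gaussian. For $x,y \in \X$, the variable $Y(x)+Y(y)$ is Gaussian with mean zero and variance $C(x,x)+C(y,y)+2C(x,y)$. The Gaussian moment generating function then gives
\[
\E(Z(x)Z(y)) = \exp\!\left(\tfrac{1}{2}[C(x,x)+C(y,y)+2C(x,y)] - \tfrac{1}{2}C(x,x) - \tfrac{1}{2}C(y,y)\right) = \exp(C(x,y)) = \rho(x,y),
\]
so $\rho$ is the non-centered covariance of $Z$. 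This settles sufficiency.

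There is no real obstacle in this plan. The only points needing care are that positivity is indispensable, since it makes the logarithm finite, and that the diagonal term $C(x,x)=\ln\rho(x,x)$ may be negative, which is harmless because it only shifts the exponent. The centering term $-\tfrac12 C(x,x)$ is exactly what makes the identity hold on the diagonal as well: $\E(Z(x)^2)=\exp(2C(x,x)-C(x,x))=\rho(x,x)$. As a cross-check, one could also verify the result through Theorem \ref{complpos}: each matrix $[\rho(x_k,x_\ell)]$ is the entrywise exponential of a positive semidefinite matrix, and is completely positive because it is realized as the second-moment matrix of the nonnegative vector $(Z(x_1),\ldots,Z(x_n))$.
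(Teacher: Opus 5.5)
Your proposal is correct and follows essentially the same route as the paper: set $C=\ln\rho$, obtain a zero-mean Gaussian field with covariance $C$ via Daniell--Kolmogorov, and use the lognormal moment formula to show that $\exp\bigl(Y(x)-\tfrac{1}{2}C(x,x)\bigr)$ has non-centered covariance $\exp(C)=\rho$. One side remark is wrong but does not affect the argument: $C(x,x)$ cannot be negative, because taking $n=1$ in the log-positive semidefiniteness condition forces $\ln\rho(x,x)\geq 0$.
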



\section{Proofs}
\label{app:proofs}

\begin{proof}[Proof of Lemma \ref{lem:negeig0}]
    By definition of positive semidefiniteness, the $\gamma$-gap of $\bLambda$ is non-negative. Moreover, $\bz \bLambda \bz^\top = 0$ when $\bz = \boldsymbol{0}$,
    which concludes the proof.\\
\end{proof}

\begin{proof}[Proof of Lemma \ref{lem:negeig}]
    There exists $\bz_0 \in \R^n$ such that $\bz_0 \, \bgL \, \bz_0^\top <0$. By continuity, $\R^n$ contains a neighborhood ${\cal N}$ of $\bz_0$
    such that $ \bz \, \bgL \, \bz^\top < 0$ for any $ \bz \in {\cal N}$. Within this neighborhood, one can find $ \bz_1$ with rational and non-zero coordinates.
    Accordingly, there exists an integer $a$ such that $ a \bz_1$ belongs to $(\Z\smallsetminus \{0\})^n$ and $ (a \bz_1) \, \bgL \, ( a \bz_1)^\top <0$. Since $a$ can be
    chosen arbitrarily large, $ a^2 \, \bz_1 \bgL \, \bz_1^\top $ can also be arbitrarily large in magnitude, which proves $ \gamma(\bgL,\Z\smallsetminus \{0\}) = -
    \infty$. The result follows since the sets $\R$, $\Q$ and $\Z$ contain $\Z\smallsetminus \{0\}$. \\
\end{proof}

\begin{proof}[Proof of Lemma \ref{lem:negeig1}]
    We first note that $\boldsymbol{z} \, \bLambda \, \boldsymbol{z}^\top =0$ when $\boldsymbol{z}$ is a vector of zeros, so that both $\gamma(\bLambda,\R_{\geq 0})$ and $\gamma(\bLambda,\mathbb{N})$ are non-positive. Let us now distinguish two cases:
    \begin{enumerate}
        \item \textbf{$\gamma(\bLambda,\R_{\geq 0}) = 0$}. Since $\N \subset \R_{\geq 0}$, we have that $\gamma(\bLambda,\R_{\geq 0}) \leq \gamma(\bLambda,\mathbb{N})$; thus, $\gamma(\bLambda,\mathbb{N})=0$.
        \item \textbf{$\gamma(\bLambda,\R_{\geq 0}) < 0$}. Then, $\bLambda$ has at least one negative eigenvalue and the proof of the lemma is similar to that of Lemma \ref{lem:negeig}, by replacing $\R^n$ and $(\Z \smallsetminus \{0\})^n$ by $\R_{\geq 0}^n$ and $\N^n$, respectively.\\
    \end{enumerate}
\end{proof}

\begin{proof}[Proof of Lemma \ref{gamma2eta}]
    The proof relies on the following identity, valid for any $\bz=(z_1,\ldots,z_n)$:
    \begin{equation*}
        \frac{1}{2} \sum_{k=1}^n \sum_{\ell=1}^n \lambda_{k \ell} \, [z_{k}-  z_{\ell}]^2 =\bz (\bDelta-\bLambda) \bz^\top.
    \end{equation*}
\end{proof}

\begin{proof}[Proof of Corollary \ref{cor:diagdom}]
   Under the stated conditions, $\bLambda-\bDelta$ is a diagonally dominant matrix with non-negative diagonal entries, hence it is positive semidefinite and its $\gamma$-gap is positive or zero (Lemma \ref{gamma2eta}). For a $n$-dimensional vector $\bz$ whose entries are all equal to the same element $z$ of $\cE$, it is seen that $\bz (\bLambda-\bDelta) \bz^\top = 0$, which concludes the proof.  \\
\end{proof}

\begin{proof}[Proof of Theorem \ref{matrixgapineq}]
\noindent \textit{Necessity.}
Let $Z = \{Z(x,\omega): x\in \X, \omega \in \Omega \}$ be a random field taking values in ${\cE}$. We have
$$ \sum_{k=1}^n \sum_{\ell=1}^n \lambda_{k\ell} \, Z(x_k,\omega) \, Z(x_\ell,\omega) \geq \inf \left\{\sum_{k=1}^n \sum_{\ell=1}^n \lambda_{k,\ell} \, z_k \, z_\ell : (z_1,\ldots,z_n) \in {\cE}^n \right\}, \quad \omega \in \Omega. $$
By definition of the gap, this gives
$$ \sum_{k=1}^n \sum_{\ell=1}^n \lambda_{k\ell} \, Z(x_k,\omega) \, Z(x_\ell,\omega) \geq \gamma(\bLambda,{\cE}),  \quad \omega \in \Omega. $$
It then remains to take the expectation of both sides to obtain (\ref{matrixgap}).

\medskip

\noindent \textit{Sufficiency.}
We first prove that the sufficiency conditions can be restricted to real symmetric matrices $\bLambda$. On the one hand, any real square matrix $\bLambda$ is the sum of a symmetric matrix ${\bSigma}=[\sigma_{k\ell}]_{k,\ell=1}^n$ and an antisymmetric matrix ${\boldsymbol{A}}=[\alpha_{k\ell}]_{k,\ell=1}^n$. On the other hand, for any $z_1,\ldots,z_n \in {\cE}$ and $x_1,\ldots,x_n \in \X$, $\sum_{k=1}^n \sum_{\ell=1}^n \alpha_{k\ell} \, z_k \, z_\ell=\sum_{k=1}^n \sum_{\ell=1}^n \alpha_{k\ell} \, \rho(x_k,x_\ell)=0$ due to the symmetry of $\rho$. In particular, this implies $\gamma(\bLambda,{\cE})=\gamma({\bSigma},{\cE})$. Accordingly, the gap inequalities (\ref{matrixgap}) are equivalent to
\begin{equation*}
    \sum_{k=1}^n \sum_{\ell=1}^n \sigma_{k\ell} \, \rho(x_k,x_\ell) \geq \gamma({\bSigma},{\cE}).
\end{equation*}

\noindent To close the proof of the sufficiency part, we distinguish four cases, depending on whether ${\cE}$ is the real line, the closed half-line $\R_{\geq 0}$ or $\R_{\leq 0}$, a compact subset, or a closed subset; the latter is the most general case, but we provide proofs for the former three cases that are of independent interest.\\

\noindent \textbf{Case 1: ${\cE}=\R$}. Let $\bLambda$ be a real symmetric matrix. If it has at least one negative eigenvalue, then the gap inequalities (\ref{matrixgap}) are automatically fulfilled, on account of Lemma \ref{lem:negeig}. If all the eigenvalues of $\bLambda$ are non-negative, i.e., if $\bLambda$ is positive semidefinite, then $\gamma(\bLambda,\R)=0$ (Lemma \ref{lem:negeig0}) and the gap inequalities become
    \begin{equation*}
            \sum_{k=1}^n \sum_{\ell=1}^n \lambda_{k\ell} \, \rho(x_k,x_\ell) = \boldsymbol{1} (\bLambda \circ \bR) \boldsymbol{1}^\top \geq 0,
    \end{equation*}
    where $\boldsymbol{1}$ is an $n$-dimensional row vector of ones, $\bR=[\rho(x_k,x_\ell)]_{k,\ell=1}^n$, and $\circ$ is the Hadamard product. These inequalities hold true as soon as $\rho$ is a symmetric positive semidefinite function, which implies that $\bR$ is a symmetric positive semidefinite matrix and so is $\bLambda \circ \bR$ due to the Schur product theorem. Reciprocally, for $\bLambda=\blambda^\top\, \blambda$ with $\blambda\in \R^n$, it is seen that the gap inequalities (\ref{matrixgap}) imply that $\bR$ must be a positive semidefinite matrix. The sufficiency conditions are therefore equivalent to $\rho$ being a symmetric positive semidefinite function. To conclude the proof, we invoke the Daniell-Kolmogorov extension theorem, which ensures the existence of a zero-mean Gaussian random field on $\X$ with $\rho$ as its covariance function \citep[Theorem 3.1]{Doob1953}. \\

\noindent \textbf{Case 2: ${\cE}=\R_{\geq 0}$ (the case ${\cE}=\R_{\leq 0}$ is treated similarly)}. Let $x_1,\ldots,x_n$ be a set of points in $\X$ and $\rho$ a mapping satisfying the conditions of Theorem \ref{matrixgapineq}. Owing to Lemma \ref{lem:negeig1}, the gap inequalities (\ref{matrixgap}) are automatically fulfilled for all real symmetric matrices $\bLambda$, except those for which $\gamma(\bLambda,\R_{\geq 0})=0$, which correspond to the so-called \emph{copositive} matrices (Definition \ref{coposit}). This proves that the real symmetric matrix $\bR = [\rho(x_k,x_{\ell}]_{k,\ell=1}^n$ must belong to the cone of \emph{completely positive} matrices, which is the dual of the copositive cone in the vector space of real matrices endowed with the trace inner product \citep{Hall1963}. In particular, $\bR$ admits a factorization of the form \citep{Dannenberg2023}
    \begin{equation}
    \label{completelypositive}
        \bR = \sum_{j=1}^m \alpha_j \, \bz_j^\top \bz_j,
    \end{equation}
    with $m$ a positive integer, $\alpha_1,\ldots,\alpha_m$ non-negative real numbers summing to $1$, and $\bz_1,\ldots,\bz_m$ elements of $\R_{\geq 0}^n$. Therefore, $\bR$ is the non-centered covariance matrix of the random vector of $\R_{\geq 0}^n$ equal to $\bz_j$ with probability $\alpha_j$.

    Based on the fact that removing the last component of this random vector yields a reduced random vector with non-centered covariance matrix $ [\rho(x_k,x_{\ell}]_{k,\ell=1}^{n-1}$, one easily shows that the finite-dimensional distributions of the random vectors obtained for different values of $n$ and different choices of $x_1,\ldots,x_n$ in $\X$ are consistent. One can therefore invoke the Daniell-Kolmogorov extension theorem to assert that there exists a random field on $\X$ with values in $\R_{\geq 0}$ having $\rho$ as its non-centered covariance function.\\

\noindent \textbf{Case 3: ${\cE}$ is a compact subset of $\R$}. Let $\rho: \X \times \X \to \R$ be a mapping satisfying conditions 1 and 2 of the Theorem. Let ${\cC}(\Omega)$ be the set of all continuous functions on the sample space $\Omega = {\cE}^{\X}$. Once endowed with the supremum norm, ${\cC}(\Omega)$ is a normed vector space. Let ${\cG}$ be the set of functions of ${\cC}(\Omega)$ of the form $\omega \mapsto Z(x_k,\omega) \, Z(x_\ell,\omega)$, with $x_k$ and $x_\ell$ being points of $\X$ and $Z(\cdot,\omega)$ being an element of $\Omega$. Let ${\cM}$ be the linear subspace of ${\cC}(\Omega)$ spanned by ${\cG} \cup \{\mathsf{1}\}$, where $\mathsf{1}: \omega \mapsto 1$ is the constant function.

Define the linear operator $\E$ on ${\cM}$ as follows:
\begin{itemize}
    \item[(a)] $\E(\alpha_0 \cdot \mathsf{1} + \sum_{k=1}^n \alpha_k g_k) = \alpha_0+\sum_{k=1}^n \alpha_k \,\E(g_k)$ for any $g_1,\ldots,g_n \in {\cG}$ and $\alpha_0,\ldots,\alpha_n \in \R$.
    \item[(b)] $\E(Z(x_k,\cdot) \, Z(x_\ell,\cdot)) = \rho(x_k,x_\ell)$ for any $x_k, x_\ell \in \X$.
\end{itemize}
The latter condition is meaningful since $\rho$ is a symmetric mapping. The former condition implies, in particular, that $\E(\mathsf{1})=1$. It can be shown (Lemma \ref{lemma1} hereinafter) that the operator $\E$ is well defined on ${\cM}$.

Let $n$ be a positive integer, $\lambda_0$ a real number, $\bLambda=[\lambda_{k\ell}]_{k,\ell=1}^n$ a real square matrix, and $x_1,\ldots,x_n$ a set of points of $\X$. Suppose that the following inequality holds for every $\omega \in \Omega$:
\begin{equation*}
    \lambda_0 + \sum_{k=1}^n \sum_{\ell=1}^n \lambda_{k\ell} \, Z(x_k,\omega) \, Z(x_\ell,\omega) \geq 0.
\end{equation*}
Equivalently, $\lambda_0 \geq -\gamma(\bLambda,{\cE})$.
Provided that the gap inequalities (\ref{matrixgap}) are satisfied, one has:
\begin{equation*}
    -\gamma(\bLambda,{\cE}) + \sum_{k=1}^n \sum_{\ell=1}^n \lambda_{k\ell} \, \E(Z(x_k,\cdot) \, Z(x_\ell,\cdot)) \geq 0,
\end{equation*}
which implies
\begin{equation*}
    \E\left\{\lambda_0 \cdot \mathsf{1} + \sum_{k=1}^n \sum_{\ell=1}^n \lambda_{k\ell} \, Z(x_k,\cdot) \, Z(x_\ell,\cdot)\right\} = \lambda_0 + \sum_{k=1}^n \sum_{\ell=1}^n \lambda_{k\ell} \, \E(Z(x_k,\cdot) \, Z(x_\ell,\cdot)) \geq 0,
\end{equation*}
Accordingly, the following implication is true:
\begin{equation*}
    \lambda_0 \cdot \mathsf{1} + \sum_{k=1}^n \sum_{\ell=1}^n \lambda_{k\ell} \, Z(x_k,\cdot) \, Z(x_\ell,\cdot) \geq 0 \implies  \E\left\{\lambda_0 \cdot \mathsf{1} + \sum_{k=1}^n \sum_{\ell=1}^n \lambda_{k\ell} \, Z(x_k,\cdot) \, Z(x_\ell,\cdot)\right\} \geq 0,
\end{equation*}
i.e., the linear operator $\E$ is non-negative on $\cM$. This entails that it is norm-bounded \citep[Supplementary Material, claim 9.1.2]{Quintanilla2008}. Owing to the Hahn-Banach continuous extension theorem \citep[Theorem 2]{Buskes1993}, $\E$ can be extended to a norm-bounded linear non-negative operator on ${\cC}(\Omega)$.

According to Tychonoff's theorem, $\Omega={\cE}^{\X}$ is a compact space with respect to the product topology. Furthermore, since $\cE$ is a Hausdorff (aka separated) space, so is $\Omega$. We can therefore invoke the Riesz-Markov-Kakutani representation theorem \citep[Theorem 2.14]{Rudin1987} to assert that there exists a non-negative Borel measure $\P$ on $\Omega$ such that $\E(f) = \int_{\Omega} f(\omega) \P({\rm d}\omega)$ for every $f \in {\cC}(\Omega)$. This is a probability measure since $\E(\mathsf{1})=1$. For $f=Z(x_k,\cdot) Z(x_\ell,\cdot)$, one gets
\begin{equation*}
    \rho(x_k,x_\ell)=\E(Z(x_k,\cdot) Z(x_\ell,\cdot)) = \int_{\Omega} Z(x_k,\omega) Z(x_\ell,\omega) \P({\rm d}\omega),
\end{equation*}
i.e., there exists a random field $Z=\{Z(x,\omega): x\in \X, \omega \in \Omega\}$ valued in ${\cE}$ having $\rho$ as its non-centered covariance. \\

\noindent \textbf{Case 4: ${\cE}$ is a closed subset of $\R$}. 
Let ${\cal S}_n$ be the space of real square matrices of order $n$ endowed with the trace inner product, which is isomorphic to the Euclidean space $\R^{n^2}$ endowed with the usual scalar product. Let ${\cal P}_n$ be the set of matrices of the form $\bz \bz^\top$, with $\bz \in \cE^n$. Let ${\cal H}_n$ be the convex hull of ${\cal P}_n$, which is a closed set insofar as $\cE$ is closed. 

We first prove that a matrix $\bR_n$ belonging to ${\cal S}_n$ fulfills the gap inequalities (\ref{matrixgap}) if, and only if, it belongs to ${\cal H}_n$. On the one hand, owing to Carath\'eodory’s theorem, any element $\bR_n$ of ${\cal H}_n$ can be expressed as a convex combination of elements of ${\cal P}_n$. Equivalently:
\begin{equation}
\label{Pn}
    \bR_n = \int_{\cE^n} \bz \bz^\top \, \P_n ({\rm d}\bz),
\end{equation}
where $\P_n$ is a probability measure on ${\cE}^n$. By definition of the $\gamma$-gap, $\langle \bLambda, \bz \bz^\top \rangle \geq \gamma(\bLambda,\cE)$ for any $\bz \in \cE^n$ and $\bLambda \in {\cal S}_n$, hence, for any $\bLambda \in {\cal S}_n$,
\begin{equation}
\label{carath}    
\langle \bLambda, \bR_n \rangle = \int_{\cE^n} \langle \bLambda,\bz \bz^\top\rangle \, \P_n ({\rm d}\bz) \geq \int_{\cE^n} \gamma(\bLambda,\cE) \, \P_n ({\rm d}\bz) = \gamma(\bLambda,\cE),
\end{equation}
i.e, $\bR_n$ fulfills the gap inequalities. 

Reciprocally, for any $\bR_n \in {\cal S}_n \smallsetminus {\cal H}_n$, the hyperplane separation theorem \citep[Example 2.20]{Boyd2004} asserts that there exists a hyperplane that strictly separates $\bR_n$ and ${\cal H}_n$, i.e., there exists $\bLambda \in {\cal S}_n$ and $b \in \R$ such that $\langle \bLambda, \bR_n \rangle < b <  \langle \bLambda, \bB \rangle$ for all $\bB \in {\cal H}_n$. In particular, $b \leq \inf \left\{ \langle \bLambda, \bB \rangle: \bB \in {\cal P}_n \right\}= \gamma(\bLambda,\cE)$, so that $\langle \bLambda, \bR_n \rangle < \gamma(\bLambda,\cE)$, i.e., $\bR_n$ does not fulfill the gap inequalities.

According to (\ref{Pn}), the probability measure $\P_n$ characterizes the distribution of a random vector $\bZ$ of $\cE^n$ having $\bR_n$ as its non-centered covariance.

Finally, we prove that, if a mapping $\rho$ satisfies the conditions of Theorem \ref{matrixgapineq}, the probability measures $\P_n$ and $\P_{n-1}$ associated with $\bR_n = [\rho(x_k,x_\ell)]_{k,\ell=1}^n$ and $\bR_{n-1} = [\rho(x_k,x_\ell)]_{k,\ell=1}^{n-1}$, as defined in (\ref{Pn}), are consistent. This stems from the fact that $\bR_{n-1}$ is the orthogonal projection of $\bR_n$ onto ${\cal S}_{n-1}$, being obtained by removing the last row and last column of $\bR_n$, and that ${\cal H}_{n-1}$ is the orthogonal projection of ${\cal H}_n$ onto ${\cal S}_{n-1}$. Therefore, $\bR_n \in {\cal H}_n \Rightarrow \bR_{n-1} \in {\cal H}_{n-1}$. This translates into the fact that, given (\ref{Pn}), $\P_{n-1}$ is obtained by marginalizing $\P_n$ on $\cE^{n-1}$:
\begin{equation*}
    \bR_{n-1} = \int_{\cE^{n-1}} \bz \bz^\top \, \P_{n-1} ({\rm d}\bz), \text{ with } \ \P_{n-1}({\rm d}\bz) = \int_{z^\prime \in \cE} \, \P_{n} ({\rm d}(\bz,z^\prime)).
\end{equation*}

Thus, we can invoke the Daniell-Kolmogorov extension theorem to assert that there exists a random field $Z$ in $\X$ with values in $\cE$ having $\rho$ as its non-centered covariance function.\\
\end{proof}

\begin{lemma}
\label{lemma1}
   The linear operator $\E$ on $\cM$ defined by the above conditions (a) and (b) (case 3 in the proof of the sufficiency part of Theorem \ref{matrixgapineq}) is well defined.
\end{lemma}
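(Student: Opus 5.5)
To prove Lemma \ref{lemma1}, I need to show that the value assigned by (a)--(b) to an element of $\cM$ does not depend on how that element is written. A function $f\in\cM$ may have two representations
\begin{equation*}
f = \alpha_0\cdot\mathsf{1} + \sum_{k,\ell} \alpha_{k\ell}\, Z(x_k,\cdot)Z(x_\ell,\cdot) = \beta_0\cdot\mathsf{1} + \sum_{k,\ell} \beta_{k\ell}\, Z(x_k,\cdot)Z(x_\ell,\cdot).
\end{equation*}
I first merge the two point sets into one finite list $x_1,\ldots,x_n$ of distinct points, padding coefficients with zeros where needed. Identical products $Z(x_k,\cdot)Z(x_\ell,\cdot)=Z(x_\ell,\cdot)Z(x_k,\cdot)$ are consistently assigned $\rho(x_k,x_\ell)=\rho(x_\ell,x_k)$ by symmetry of $\rho$, so (b) is meaningful. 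By linearity it then suffices to prove one implication: if
\begin{equation*}
\lambda_0\cdot\mathsf{1}+\sum_{k,\ell}\lambda_{k\ell}Z(x_k,\cdot)Z(x_\ell,\cdot)\equiv 0 \text{ on } \Omega,
\end{equation*}
then $\lambda_0+\sum_{k,\ell}\lambda_{k\ell}\rho(x_k,x_\ell)=0$.

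The key step is to use the gap inequalities (\ref{matrixgap}) in both directions. The identity means that $\bz\bLambda\bz^\top=-\lambda_0$ for every $\bz\in\cE^n$, since every such $\bz$ arises as $(\omega(x_1),\ldots,\omega(x_n))$ for some $\omega\in\Omega=\cE^\X$. Hence $\gamma(\bLambda,\cE)=-\lambda_0$. Because $-\bLambda$ yields the constant value $\lambda_0$, we also have $\gamma(-\bLambda,\cE)=\lambda_0$. Applying (\ref{matrixgap}) to $\bLambda$ gives $\langle\bLambda,\bR\rangle\ge-\lambda_0$, and applying it to $-\bLambda$ gives $-\langle\bLambda,\bR\rangle\ge\lambda_0$. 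Together these force $\langle\bLambda,\bR\rangle=-\lambda_0$, which is exactly the required vanishing. This argument uses only that $\cE$ is nonempty, so every infimum is attained as a genuine constant. If $\cE$ is empty, $\Omega$ is trivial and the statement is vacuous.

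The only delicate point is the bookkeeping in the reduction step. Representations may use different points or repeated points, and the symmetric and antisymmetric parts of $\bLambda$ must be handled consistently. Merging to a common list of distinct points resolves this. The antisymmetric part contributes zero both to $\bz\bLambda\bz^\top$ and, by symmetry of $\rho$, to $\langle\bLambda,\bR\rangle$, so no separate argument is needed for it.
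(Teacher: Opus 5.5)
Your proof is correct, and it takes a different route from the paper at the one step that matters.

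\textbf{The paper's argument.} Following Quintanilla's claim 9.1.1, the paper subtracts the two representations and observes that the difference is the zero function $0\cdot\mathsf{1}$. It then invokes linearity of $\E$ on $\cM$ to get $\E(\text{first})-\E(\text{second})=\E(0\cdot\mathsf{1})=0$. This step is circular. Linearity of $\E$ on $\cM$ already presupposes that $\E$ is well defined. The real question is whether a nontrivial representation $\lambda_0\cdot\mathsf{1}+\sum_{k,\ell}\lambda_{k\ell}Z(x_k,\cdot)Z(x_\ell,\cdot)$ of the zero function is assigned the value $0$ by (a)--(b).

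The paper's argument never uses the gap inequalities, yet the lemma is false without them. For $\cE=\{-1,1\}$ one has $Z(x,\cdot)Z(x,\cdot)=\mathsf{1}$ on $\Omega$. Conditions (a)--(b) are then consistent only if $\rho(x,x)=1$, and that is exactly what the gap inequalities with the $1\times1$ matrices $\bLambda=[\pm1]$ enforce.

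\textbf{What your argument adds.} You supply the missing ingredient. A combination that vanishes identically on $\Omega$ is both non-negative and non-positive. The gap inequalities for $\bLambda$ and $-\bLambda$ transfer both signs to its assigned value. This is the same positivity mechanism the paper uses immediately afterwards to show that $\E$ is non-negative on $\cM$. Your route therefore obtains well-definedness and positivity from a single coefficient-level computation. Linearity is used only for the map $(\lambda_0,\bLambda)\mapsto\lambda_0+\langle\bLambda,\bR\rangle$, so nothing is presupposed.

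Your preliminary merge to a common list of distinct points is genuinely needed. Only then is every $\bz\in\cE^n$ of the form $(\omega(x_1),\ldots,\omega(x_n))$ for some $\omega\in\Omega$. The argument also works for any nonempty $\cE$, compact or not; compactness only enters later, in the Riesz--Markov step.

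One small correction to your aside on $\cE=\emptyset$. The statement is vacuous because $\gamma(\bLambda,\emptyset)=+\infty$ makes the gap inequalities unsatisfiable. It is not because $\Omega$ is trivial: with $\Omega=\emptyset$ one would have $\mathsf{1}=0$, and (a) would be inconsistent.
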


\bigskip

\begin{proof}[Proof of Lemma \ref{lemma1}]
    We follow \citet[Supplementary Material, claim 9.1.1]{Quintanilla2008}. Let $g \in {\cM}$ and assume that it possesses two different representations as linear combinations of elements of ${\cG} \cup \{\mathsf{1}\}$:
    \begin{equation*}
        g = \alpha_0 \cdot \mathsf{1} + \sum_{k=1}^n \sum_{\ell=1}^n \alpha_{k\ell} \, Z(x_k,\cdot) \, Z(x_\ell,\cdot) = \alpha^\prime_0 \cdot \mathsf{1} + \sum_{k=1}^{n^\prime} \sum_{\ell=1}^{n^\prime} \alpha^\prime_{k\ell} \, Z(y_k,\cdot) \, Z(y_\ell,\cdot).
    \end{equation*}
    Then,
    \begin{equation*}
    \alpha_0 \cdot \mathsf{1} + \sum_{k=1}^n \sum_{\ell=1}^n \alpha_{k\ell} \, Z(x_k,\cdot) \, Z(x_\ell,\cdot) - \alpha^\prime_0 \cdot \mathsf{1} - \sum_{k=1}^{n^\prime} \sum_{\ell=1}^{n^\prime} \alpha^\prime_{k\ell} \, Z(y_k,\cdot) \, Z(y_\ell,\cdot) = g-g = 0\cdot \mathsf{1}
    \end{equation*}
    also belongs to ${\cM}$. Using the linearity of $\E$ on $\cM$, it comes
    \begin{equation*}
    \begin{split}
        \E &\Big\{ \alpha_0 \cdot \mathsf{1} + \sum_{k=1}^n \sum_{\ell=1}^n \alpha_{k\ell} \, Z(x_k,\cdot) \, Z(x_\ell,\cdot) \Big\} - \E \Big\{\alpha^\prime_0 \cdot \mathsf{1} + \sum_{k=1}^{n^\prime} \sum_{\ell=1}^{n^\prime} \alpha^\prime_{k\ell} \, Z(y_k,\cdot) \, Z(y_\ell,\cdot)\Big\} \\&= \E(0 \cdot \mathsf{1}) = 0 \cdot \E(\mathsf{1}) = 0,
    \end{split}
    \end{equation*}
    proving that any two representations of the same element of $\cM$ have the same image by $\E$.
\end{proof}

\bigskip

\begin{proof}[Proof of Theorem \ref{matrixgapineq_boundedset}] 
    Let $\bar{\cE}$ denote the closure of $\cE$ in $\R$ endowed with the usual topology. On the one hand, $\bar{\cE}$ is closed and bounded, hence it is a compact set of $\R$. On the other hand, one has
    $$\gamma(\bLambda,\cE) = \inf \{ \bz \bLambda \bz^\top: \bz \in {\cE}^n \} = \inf \{ \bz \bLambda \bz^\top: \bz \in {\bar{\cE}}^n \} = \gamma(\bLambda,\bar{\cE}).$$
    Accordingly, owing to Theorem \ref{matrixgapineq}, a symmetric mapping $\rho$ satisfying (\ref{matrixgap_boundedset}) is the non-centered covariance of a random field $Z$ on $\mathbb{X}$ valued in $\bar{\cE}$.

    Let $\epsilon > 0$. We define a $\epsilon$-neighborhood of $z \in \bar{\cE}$ as $N_\epsilon(z) = [z-\epsilon,z+\epsilon] \cap \cE$, and a mapping $\varphi_\epsilon: \bar{\cE} \to \cE$ that associates to each point of $\bar{\cE}$ a neighboring point of $\cE$:
    \begin{equation*}
        \varphi_\epsilon(z) = \dot{z}, \quad z \in \bar{\cE},
    \end{equation*}
    where $\dot{z}$ is an arbitrary point chosen in $N_\epsilon(z)$.

    From the above random field $Z$, we define the random field $\dot{Z} = \varphi_\epsilon(Z)$ valued in $\cE$ and the random field $D=\dot{Z}-Z$ valued in $[-\epsilon,\epsilon]$. The non-centered covariance of $\dot{Z}$ is
    \begin{equation*}
    \begin{split}
        \rho_\epsilon(x,y) &= \E(\dot{Z}(x,\cdot) \dot{Z}(y,\cdot)) \\
        &= \rho(x,y) + \E(D(x,\cdot) Z(y,\cdot)) + \E(Z(x,\cdot) D(y,\cdot)) + \E(D(x,\cdot) D(y,\cdot)).
    \end{split}
    \end{equation*}
    Because $Z$ is valued in the bounded set $\bar{\cE}$ and $\lvert D \rvert$ is upper-bounded by $\epsilon$, it is seen that $\rho_\epsilon$ tends pointwise to $\rho$ as $\epsilon$ tends to zero.\\

\end{proof}

\begin{proof}[Proof of Theorems \ref{variogr} and \ref{variogr2}]
    The theorems are clearly equivalent and are proved in a way similar to that of Theorem \ref{matrixgapineq}. In the sufficiency part (case 4), one just needs to replace the matrix $\bz \, \bz^\top$ by $[\boldsymbol{1}^\top \bz-\bz^\top \boldsymbol{1}]^2$ in the definition of ${\cal P}_n$, and the $\gamma$-gap by the $\eta$-gap.\\
\end{proof}

\begin{proof}[Proof of Example \ref{example2}]
    Let us show that $\rho_q$ fulfills the conditions of Theorem \ref{highorder}. The symmetry condition stems from the symmetry of the hafnian and of the mapping $\rho$ defining $\rho_q$.
    As for the gap inequalities, they stem from Theorem \ref{matrixgapineq} when $q=2$. Let us examine the case $q=4$. Let $Z$ be a random field on $\X$ valued in $\R$ with non-centered covariance $\rho$. Define a random field $Y$ on $\X \times \X$ as the product of two independent copies of $Z$. On account of Theorem \ref{matrixgapineq}, one can write
\begin{equation*}
    \begin{split}
        &\sum_{k_1=1}^n \sum_{k_2=1}^n \sum_{k_3=1}^n \sum_{k_4=1}^n \lambda_{k_1,k_2,k_3,k_4} \, \rho(x_{k_1},x_{k_2}) \rho(x_{k_3},x_{k_4}) \\
        &= \sum_{k_1=1}^n \sum_{k_2=1}^n \sum_{k_3=1}^n \sum_{k_4=1}^n \lambda_{k_1,k_2,k_3,k_4} \, \E(Y(x_1,x_3) \, Y(x_2,x_4)) \\
        &\geq \inf_{(z \, z^\prime) \ \in {\R}^2} \sum_{k_1=1}^n \sum_{k_2=1}^n \sum_{k_3=1}^n \sum_{k_4=1}^n \lambda_{k_1,k_2,k_3,k_4} \, z \, z^\prime \\
        &\geq \inf_{(z_{k_1},z_{k_2},z_{k_3},z_{k_4}) \ \in {\R}^4} \sum_{k_1=1}^n \sum_{k_2=1}^n \sum_{k_3=1}^n \sum_{k_4=1}^n \lambda_{k_1,k_2,k_3,k_4} \, z_{k_1} \, z_{k_2} \, z_{k_3} \, z_{k_4} = \gamma(\bLambda,\R),
    \end{split}
    \end{equation*}
with $\bLambda= [\lambda_{k_1,\ldots,k_4}]_{k_1,\ldots,k_4 = 1}^n$.
Accordingly, by definition of the hafnian,
\begin{equation*}
    \begin{split}
        &\sum_{k_1=1}^n \sum_{k_2=1}^n \sum_{k_3=1}^n \sum_{k_4=1}^n \lambda_{k_1,k_2,k_3,k_4} \, \rho_4(x_{k_1},x_{k_2},x_{k_3},x_{k_4}) \geq 3 \, \gamma(\bLambda,\R).
    \end{split}
    \end{equation*}
    Now, $\gamma(\bLambda,\R)$ is either $0$ or $-\infty$. Indeed, $\sum_{k_1=1}^n \sum_{k_2=1}^n \sum_{k_3=1}^n \sum_{k_4=1}^n \lambda_{k_1,k_2,k_3,k_4} \, z_{k_1} z_{k_2} z_{k_3} z_{k_4}$ is zero when $(z_{k_1},z_{k_2},z_{k_3},z_{k_4})=(0,0,0,0)$ and, if the quadruple sum is negative for some $(z_{k_1},z_{k_2},z_{k_3},z_{k_4})\neq (0,0,0,0)$, then it tends to $-\infty$ for $(az_{k_1},az_{k_2},az_{k_3},az_{k_4})$ with $a$ tending to infinity. This implies that $3 \, \gamma(\bLambda,\R)= \gamma(\bLambda,\R)$ and concludes the proof for $q=4$. The proof for $q>4$ can be done similarly by induction on the product space on which the random field $Y$ is defined.

\end{proof}

\begin{proof}[Proof of Theorem \ref{functiongapineq}]

First note that $L^2(\X^2,\mu)$ is a Hilbert space \citep[Example 4.5(b)]{Rudin1987}; in particular, being a complete normed space, it is locally convex. Furthermore, owing to the Riesz representation theorem \citep[Theorem 13.32]{Roman2008}, it is self-dual, i.e., isometrically isomorphic to its dual space.

Let ${\cal P}$ be the set of functions in $\X^2$ of the form $(x,y) \mapsto \varphi_z(x,y) = z(x) z(y)$, with $z \in \cE^\X$. Let ${\cal H}$ be the closed
convex hull of ${\cal P}$. Since $\cE$ is compact, Tychonoff's theorem asserts that $\cE^{\X}$ is compact with respect to the product topology, and so are
${\cal P}$ and, based on Theorem 3.20(c) of \cite{Rudin1991}, ${\cal H}$. 

We prove that a function $\rho$ belonging to $L^2(\X^2,\mu)$ fulfills the conditions of Theorem \ref{functiongapineq} if, and only if, it belongs to ${\cal H}$. On the one hand, Choquet's theorem \citep[Chapter 3]{Phelps2001} and Milman's theorem \citep[Theorem 3.25]{Rudin1991} assert that any element $\rho$ of ${\cal H}$ can be expressed as a convex combination of elements of ${\cal P}$. Therefore, $\rho$ is symmetric and such that
\begin{equation}
\label{Pn2}
    \rho(x,y) =  \int_{\cE^\X} \varphi_z(x,y) \, \P ({\rm d}z)= \int_{\cE^\X} z(x) z(y) \, \P ({\rm d}z), \quad x,y \in \X,
\end{equation}
where $\P$ is a probability measure on ${\cE}^\X$. Since, furthermore, $\mu$ is finite and $\cE$ is bounded, any function $\varphi_z$ in (\ref{Pn2}) belongs to $ L^2(\X^2,\mu)$. By definition of the $\gamma$-gap, $\langle \lambda, \varphi_z \rangle_{\mu} \geq \gamma(\lambda,\cE,\mu)$ for any $z \in \cE^\X$ and $\lambda \in L^2(\X^2,\mu)$, hence, for any $\lambda \in L^2(\X^2,\mu)$, $$\langle \lambda, \rho \rangle_{\mu} = \int_{\cE^\X} \langle \lambda, \varphi_z \rangle_{\mu} \, \P ({\rm d}z) \geq \int_{\cE^\X} \gamma(\lambda,\cE,\mu) \, \P ({\rm d}z) = \gamma(\lambda,\cE,\mu),$$ i.e, $\rho$ fulfills the gap inequalities. Reciprocally, for any $\rho \in L^2(\X^2,\mu) \smallsetminus {\cal H}$, the Hahn–Banach separation theorem \citep[Theorem 3.4(b)]{Rudin1991} asserts that there exists a hyperplane that strictly separates $\rho$ and ${\cal H}$, i.e., there exists $\lambda \in L^2(\X^2,\mu)$ and $b \in \R$ such that $\langle \lambda, \rho \rangle_{\mu} < b <  \langle \lambda, f \rangle_{\mu}$ for all $f \in {\cal H}$. In particular, as ${\cal P} \subset {\cal H}$, $b \leq \inf \left\{ \langle \lambda, f \rangle_{\mu}: f \in {\cal P} \right\}= \gamma(\lambda,\cE,\mu)$, so that $\langle \lambda, \rho \rangle_{\mu} < \gamma(\lambda,\cE,\mu)$, i.e., $\rho$ does not fulfill the gap inequalities.

Accordingly, a function $\rho$ in $L^2(\X^2,\mu)$ fulfills the conditions of Theorem \ref{functiongapineq} if, and only if, it admits a representation of the form (\ref{Pn2}), i.e., if, and only if, it is the non-centered covariance of a random field on $\X$ with values in $\cE$ (the distribution of which is characterized by the probability measure $\P$).

To conclude the proof, note that the antisymmetric part of $\lambda \, {\rm d}\mu$ does not contribute to the integral in (\ref{functiongap}) because of the symmetry of $\rho$, therefore only the symmetric part of $\lambda \, {\rm d}\mu$ matters.

\end{proof}

\begin{proof}[Proof of Theorem \ref{functiongapineqRplus}]
    \noindent \textit{Necessity.} Let $Z$ be a random field on $\X$ with values in $\cE$ and non-centered covariance function $\rho$. On the one hand, the latter function is symmetric. On the other hand, by definition of the $\gamma$-gap, one has, for any continuous function $\lambda$,
    \begin{equation}
    \label{R+}
        \int_{\X^2} \lambda(x,y) \, Z(x,\cdot) \, Z(y,\cdot) \, {\rm d}\mu(x,y) \geq \gamma(\lambda,\cE,\mu),
    \end{equation}
    where the quantities on both sides may be infinite. The gap inequality (\ref{functiongapRplus}) follows by taking the expected value of both sides of (\ref{R+}).

    \medskip

    \noindent \textit{Sufficiency.} We develop the proof for the case $\cE = \R_{\geq 0}$; the remaining cases can be proved in the same way. Let $\rho: \X \times \X \to \R$ be a symmetric continuous function that is not the non-centered covariance of any random field on $\X$ with non-negative values. According to Theorem \ref{matrixgapineq}, there exists an integer $n$, a set of points $x_1,\ldots, x_n$ and a copositive matrix $\bLambda = [\lambda_{k\ell}]_{k,\ell=1}^n$ such that the discrete gap inequality (\ref{matrixgap}) does not hold:
    \begin{equation*}
        \sum_{k=1}^n \sum_{\ell=1}^n \lambda_{k\ell} \, \rho(x_k,x_\ell) < 0.
    \end{equation*}
    Since $\varpi$ is positive and $\rho$ is continuous, we can find a ``small enough'' open ball $O$ centered at the origin of $\X^2$ such that
    \begin{equation*}
        \int_{\X^2} \lambda(x,y) \, \rho(x,y) \, {\rm d}\varpi(x) \, {\rm d}\varpi(y) < 0,
    \end{equation*}
    where
    \begin{itemize}
    \item $O+(x_k,x_\ell) \subset \X$ for all $k, \ell = 1,\ldots,n$
    \item $\{O+(x_k,x_\ell): k,\ell=1,\ldots,n \}$ are pairwise disjoint
    \item $\lambda$ is a function in $\X^2$ defined by
    $$\lambda(x,y) = \begin{cases}
            \lambda_{k\ell} \text{ if $(x-x_k,y-x_\ell) \in O$}\\
            0 \text{ otherwise.}
        \end{cases}$$
    \end{itemize}
    For this particular function $\lambda$ and any function $z$ defined on $\X$, one has:
    \begin{equation*}
        \int_{\X^2} \lambda(x,y) \, z(x) \, z(y) \, {\rm d}\varpi(x) \, {\rm d}\varpi(y) =  \int_O \sum_{k=1}^n \sum_{\ell=1}^n \lambda_{k \ell} \, z(x+x_k) \, z(y+x_\ell) \, {\rm d}\varpi(x+x_k) \, {\rm d}\varpi(y+x_\ell),
    \end{equation*}
    where the double sum in the integrand is non-negative since $\bLambda$ is copositive. Accordingly, the gap $\gamma(\lambda,\R_{\geq 0},\mu)$ is non-negative and
    \begin{equation*}
        \int_{\X^2} \lambda(x,y) \, \rho(x,y) \, {\rm d}\varpi(x) \, {\rm d}\varpi(y) < \gamma(\lambda,\R_{\geq 0},\mu),
    \end{equation*}
    which proves that $\rho$ does not fulfill the gap inequality (\ref{functiongapRplus}) for the function $\lambda$ defined above.

\end{proof}

\begin{proof}[Proof of Theorem \ref{th:PSD}]
    Given that $\gamma(\bLambda,\Z)=\gamma(\bLambda,\R)$ for any real symmetric matrix $\bLambda$ (Corollary \ref{lem:gammaR}), the conditions of Theorem \ref{th:PSD} are equivalent to that of Theorem \ref{matrixgapineq} when ${\cE} = \R$ or $\Z$, as shown in the proof of the sufficiency part (case 1) of the latter theorem.\\
\end{proof}

\begin{proof}[Proof of Theorem \ref{variogr_unboundedset}]
We establish the equivalence of Theorem \ref{variogr_unboundedset} with Theorem \ref{variogr2} in the case when $\cE = \R$ or $\Z$. Let $\bLambda=[\lambda_{k\ell}]_{k,\ell=1}^n$ be a real symmetric matrix with zero diagonal entries. Lemma \ref{lem:etaR} ensures that $\eta(\bLambda,\cE)$ can take only two values:
    \begin{enumerate}
        \item $\eta(\bLambda,\cE) = +\infty$: if so, the gap inequality (\ref{gap4variog}) is automatically fulfilled.
        \item $\eta(\bLambda,\cE) = 0$. On account of Lemma \ref{gamma2eta}, this happens if, and only if, the matrix $\bLambda - \bDelta$ is positive semidefinite. Accounting for the fact that $g$ is symmetric and equal to zero on the diagonal of $\X \times \X$, one has:
    \begin{equation*}
        \langle \bLambda, \bG \rangle = \sum_{k=1}^n \sum_{\ell=1}^n \lambda_{k \ell} \, g(x_{k},x_{\ell}) = \langle \bDelta-\bLambda, \widetilde{\bG} \rangle,
    \end{equation*}
    with $\bG = [g(x_k,x_\ell)]_{k,\ell=1}^n$ and $\widetilde{\bG} = [g(x_1,x_\ell)+g(x_k,x_1)-g(x_k,x_\ell)]_{k,\ell=1}^n$. Therefore, for the matrices $\bLambda$ such that $\eta(\bLambda,\cE) = 0$, the gap inequality can be rewritten as
    \begin{equation}
    \label{traceineq}
        0 \leq \langle \bLambda, \bG \rangle=\langle \bDelta-\bLambda, \widetilde{\bG} \rangle= \boldsymbol{1} ((\bLambda -\bDelta) \circ \widetilde{\bG}) \boldsymbol{1}^\top,
    \end{equation}
    where $\bLambda-\bDelta$ can be any symmetric positive semidefinite matrix for which every column and every row sums zero. Taking $\bLambda = \blambda \, \blambda^\top$ where $\blambda$ is a vector of $\R^n$ whose elements sum to zero, it is seen that $g$ must fulfill the conditional negative semidefiniteness condition (\ref{condneg}). Reciprocally, if $g$ is conditionally negative semidefinite, then $\widetilde{\bG}$ is a positive semidefinite matrix \citep[Lemma 2.4]{Reams1999} and so is $(\bLambda -\bDelta) \circ \widetilde{\bG}$ due to the Schur product theorem, so that the inequality (\ref{traceineq}) holds.
        \end{enumerate}

It is concluded that, when ${\cE} = \R$ or $\Z$, condition 3 of Theorem \ref{variogr_unboundedset} is equivalent to condition 3 of Theorem \ref{variogr2}, therefore the both theorems are equivalent. \\   
\end{proof}

\begin{proof}[Proof of Theorem \ref{th:PSDZ0}]
    Let $\bLambda$ be a real symmetric matrix of order $n$. If $\bLambda$ has a negative eigenvalue, then $\gamma(\bLambda,\Z \smallsetminus \{0\}) = -\infty$ (Lemma \ref{lem:negeig}) and the gap inequalities (\ref{matrixgap}) are automatically fulfilled. Otherwise, $\bLambda$ is positive semidefinite and one has, for any real symmetric positive semidefinite matrix $\bR$ of order $n$:
    \begin{equation*}
        \begin{split}
            \langle \bLambda, \bR + \varepsilon \boldsymbol{I} \rangle &= \langle \bLambda, \bR \rangle + \varepsilon \, \text{tr}(\boldsymbol{\bLambda})\\ & \geq \boldsymbol{1} (\bLambda \circ \bR) \boldsymbol{1}^\top + \varepsilon \, n \, \text{det}(\boldsymbol{\bLambda})^{\frac{1}{n}} \text{ (AM-GM inequality)}\\
            & \geq 0 +            \frac{\varepsilon \, n }{\gamma_n} \gamma(\bLambda,\Z\smallsetminus\{0\}) \text{ (Lemma \ref{lem:Nplus})}\\
            & \geq \gamma(\bLambda,\Z\smallsetminus\{0\}) \text{ (Lemma \ref{lem:Nplus})}.
        \end{split}
    \end{equation*}
    Accordingly, $\rho+\varepsilon \, \delta$ satisfies the sufficient conditions of Theorem \ref{matrixgapineq} for $\cE=\Z \smallsetminus \{0\}$ and is therefore the non-centered covariance of a random field valued in $\Z \smallsetminus \{0\}$.

    If $\rho(x,x) \geq \frac{1}{3}$ for any $x \in \X$ and $\varepsilon \geq \frac{2}{3}$, then the gap inequalities are trivially satisfied for any positive semidefinite matrix $\bLambda$ of order $1$ (read: any non-negative real value $\lambda_{11}$), so the above proof can be limited to the case $n \geq 2$, for which the lower bound on $\varepsilon$ can be reduced to $\frac{2}{3}$ owing to Lemma \ref{lem:Nplus}.
\end{proof}

\begin{proof}[Proof of Theorem \ref{th:CP}]
    \noindent \textit{Necessity.} Let $\rho$ be the non-centered covariance of a unit process in $\X$, $\bLambda=[\lambda_{k\ell}]_{k,\ell=1}^n$ be a corner positive matrix, and $x_1,\ldots,x_n$ be a set of points in $\X$. Then, Conditions 1 and 3 of Theorem \ref{th:CP} are derived in a straightforward manner from Theorem \ref{matrixgapineq} and Definition \ref{def:CP}. Furthermore, the gap inequalities (\ref{matrixgap}) applied with a $1 \times 1$ matrix $\bLambda=\lambda_{11}$ gives $\rho(x,x) \geq 1$ when choosing $\lambda_{11}=1$ and $\rho(x,x) \leq 1$ when choosing $\lambda_{11}=-1$, which leads to the remaining condition $\rho(x,x)=1$ for any $x \in \X$.\\

    \noindent \textit{Sufficiency.} Suppose that $\rho$ is a symmetric mapping in $\X \times \X$ fulfilling Eq. (\ref{corner}) and such that $\rho(x,x)=1$ for any $x \in \X$. Let $\bLambda=[\lambda_{k\ell}]_{k,\ell=1}^n$ be a real square matrix and $x_1,\ldots,x_n$ be a set of points in $\X$. Denote by $\bJ$ the matrix of size $n \times n$ with all its entries equal to $0$, except the entry in the first row and first column that is equal to $1$. Then, $\bLambda+\gamma(\bLambda,\{-1,1\}) \cdot \bJ$ is corner positive, and the application of (\ref{corner}) leads to the gap inequalities (\ref{matrixgap}).\\
\end{proof}

\begin{proof}[Proof of Theorem \ref{thMcMillan}]
A constructive proof is given by \cite{McMillan1955} for $\X=\Z$. We can offer a simpler alternative proof based on Theorem \ref{matrixgapineq}. As $\rho$, $\rho^*$ is a symmetric mapping and it therefore remains to prove that the gap inequalities (\ref{matrixgap}) hold for any real symmetric matrix $\bLambda$. The clue is to decompose $\bLambda$ into a matrix with zero diagonal entries $\bar{\bLambda}$ and a diagonal matrix $diag(\bLambda)$ and to notice that the gaps $\gamma(\bar{\bLambda},[-1,1])$ and $\gamma(\bar{\bLambda},\{-1,1\})$ are the same \citep[Lemma 2.2]{Megretski2001}. Accordingly, for any set of points $x_1,\ldots, x_n$,
\begin{equation*}
\begin{split}
    \sum_{k=1}^n \sum_{\ell=1}^n \lambda_{k,\ell} \, \rho^*(x_k,x_\ell) &= \sum_{k=1}^n \sum_{\ell=1}^n \bar{\lambda}_{k,\ell} \, \rho(x_k,x_\ell) + \sum_{k=1}^n \lambda_k \\&\geq \gamma(\bar{\bLambda},\{-1,1\}) + \text{tr}(\bLambda) \\&= \gamma({\bLambda},\{-1,1\}),
\end{split}
\end{equation*}
which concludes the proof. \\
\end{proof}

\begin{proof}[Proof of Theorem \ref{th:necessary-11}]
    For $\bLambda(u,v)=[\lambda_{k\ell}(u,v)]_{k,\ell=1}^n$ and $\bz \in \{-1,1\}^n$, one has
\begin{equation*}
    \bz \bLambda(u,v) \bz^\top = \left[\sum_{k=1}^n (-1)^{\sum_{j=1}^m b_j(k) b_j(u)} z_k \right] \left[\sum_{\ell=1}^n (-1)^{\sum_{j=1}^m b_j(\ell) b_j(v)} z_\ell \right] := (\boldsymbol{U}^\top \boldsymbol{1}) (\boldsymbol{1}^\top \boldsymbol{V}),
\end{equation*}
where $\boldsymbol{U}$ and $\boldsymbol{V}$ are the $n$-dimensional vectors whose entries are the summands in the above expression. The $i$-th entries of $\boldsymbol{U}$ and $\boldsymbol{V}$ are the same unless the number of bit flips between $\boldsymbol{b}(i) \circ \boldsymbol{b}(u)$ and $\boldsymbol{b}(i) \circ \boldsymbol{b}(v)$ is odd, this number being $r_i(u,v)=(\boldsymbol{b}(u) \veebar \boldsymbol{b}(v)) \, \boldsymbol{b}(i)^\top$. Accordingly, up to a reordering of the entries of $\boldsymbol{U}$ and $\boldsymbol{V}$, one can split these vectors into $\boldsymbol{U}=[\boldsymbol{W}_1 ,\boldsymbol{W}_2]$ and $\boldsymbol{V}=[\boldsymbol{W}_1,-\boldsymbol{W}_2]$, where $\boldsymbol{W}_1 \in \{-1,1\}^{n-q(u,v)}$, $\boldsymbol{W}_2 \in \{-1,1\}^{q(u,v)}$ and $q(u,v)$ is the number of odd entries in $[r_i(u,v)]_{i=1}^n$. This entails
\begin{equation*}
    \bz \bLambda(u,v) \bz^\top = \sigma_1^2 - \sigma_2^2,
\end{equation*}
with $\sigma_1$ and $\sigma_2$ the sums of the entries of $\boldsymbol{W}_1$ and $\boldsymbol{W}_2$, respectively. Since these vectors can be any element of $\{-1,1\}^{n-q(u,v)}$ and $\{-1,1\}^{q(u,v)}$ (because $\bz$ can be any element of $\{-1,1\}^n$), the minimal value of $\bz \bLambda(u,v) \bz^\top$---that is, the gap $\gamma(\bLambda(u,v),\{-1,1\})$---is obtained when
\begin{itemize}
    \item $\sigma_1 = 0$, which is realizable only if $n-q(u,v)$ is even, or $|\sigma_1| = 1$, realizable if $n-q(u,v)$ is odd; that is, $\sigma_1 = b_m(n-q(u,v))$, irrespective of the parity of $n-q(u,v)$;
    \item $|\sigma_2| = q(u,v)$, which can always be attained.
\end{itemize}
Since $q(u,v)$ can be expressed as indicated in the claim of the Theorem, one concludes the proof by invoking Theorem \ref{matrixgapineq} (necessary part).
\end{proof}

\begin{proof}[Proof of Theorem \ref{variogr_unitprocess}]
    The result stems from Theorem \ref{variogr} and the fact that, for a unit process $Z$ with variogram $g$, $Z-\E(Z)$ has no drift and the same semivariogram $g$. Alternatively, it also stems from Theorem \ref{matrixgapineq} and the relationship $\rho=1-g$ between the non-centered covariance $\rho$ and the semivariogram $g$ of a unit process.\\
\end{proof}

\begin{proof}[Proof of Theorem \ref{th:boundedinterval}]
    \noindent \textit{Necessity.} Any covariance function is symmetric and positive semidefinite. Moreover, if a random field $Z$ is valued in $[-1,1]$, then so does the product $Z(x_k,\cdot) Z(x_{\ell},\cdot)$ of any two of its components and, by taking the expected value, the non-centered covariance $\rho(x_k,x_{\ell})$.\\

    \noindent \textit{Sufficiency.} The conditions of the theorem would be sufficient if they implied the gap inequalities (\ref{matrixgap}), which are equivalent to:
    \begin{equation*}
        \langle \bLambda, \bR \rangle \leq \sup \{\bz \bLambda \bz^\top: \bz \in [-1,1]^n\}
    \end{equation*}
    for any real symmetric matrix $\bLambda$ of order $n$ and any $\bR=[\rho(x_k,x_\ell)]_{k,\ell=1}^n$ where $x_1,\ldots,x_n$ are points in $\X$. However, this inequality cannot be satisfied for any such $\bLambda$ \citep[Section 2.3.1]{Megretski2001}. Restricting $\rho$ to be valued in a smaller interval of the form $[-a,a]$ would not suffice either.
\end{proof}

\begin{proof}[Proof of Theorem \ref{th:boundedinterval20}]
\noindent \textit{Necessity.} Let $Z$ be a random field on $\X$ with non-centered covariance $\rho$ and values in $[-1,1]$. For $x \in \X$ and $u \in [-1,1]$, let $\mathsf{1}_{Z(x,\cdot) > u}$ be the indicator random variable equal to $1$ if $Z(x,\cdot) \geq u$, $0$ otherwise. One has:
\begin{equation}
\label{sumind1}
        \int_{-1}^{1} \mathsf{1}_{Z(x,\cdot) > u} \, {\rm d}u = \int_0^{2} \mathsf{1}_{Z(x,\cdot)+1 > u} \, {\rm d}u = Z(x,\cdot)+1,
\end{equation}
and, by taking the expected values of both sides,
\begin{equation}
\label{sumind2}
    \int_{-1}^{1} \E(\mathsf{1}_{Z(x,\cdot) > u})\, {\rm d}u = \E(Z(x,\cdot))+1.
\end{equation}
From (\ref{sumind1}), it comes:
\begin{equation*}
\begin{split}
        \int_{-1}^{1} \int_{-1}^{1} \E\left( \mathsf{1}_{Z(x,\cdot) > u} \, \mathsf{1}_{Z(y,\cdot) > v} \right) \, {\rm d}u \, {\rm d}v &= \E\left((Z(x,\cdot)+1) \, (Z(y,\cdot)+1) \right) \\&= \rho(x,y)+\E(Z(x,\cdot))+\E(Z(y,\cdot))+1.
\end{split}
\end{equation*}
Arguments in \citet[Appendix C]{emery2025} imply the following identity:
\begin{equation*}
    \frac{1}{2} \E\left( [\mathsf{1}_{Z(x,\cdot) > u} - \mathsf{1}_{Z(y,\cdot) > v}]^2 \right) = \frac{1}{2\pi} \int_0^1 \arccos C_{u,v;t}(x,y) \, {\rm d}t,
\end{equation*}
where, for each $t \in [0,1]$, $\{C_{u,v;t}: u, v \in [-1,1] \}$ are the cross-covariances of a family of jointly Gaussian random fields $\{Z_{u;t}: u \in [-1,1]\}$ with zero means and unit variances. Equivalently, one can view these random fields as a single standard Gaussian random field defined on $\X \times [-1,1]$ and write
\begin{equation}
\label{sumind3}
    \frac{1}{2} \E\left( [\mathsf{1}_{Z(x,\cdot) > u} - \mathsf{1}_{Z(y,\cdot) > v}]^2 \right) = \frac{1}{2\pi} \int_0^1 \arccos C_{t}((x,u),(y,v)) \, {\rm d}t,
\end{equation}
where, for each $t \in [0,1]$, $C_{t}$ is the covariance of a standard Gaussian random field $Z_{t}$ on $\X\times [-1,1]$, i.e., $C_{t}$ is symmetric, positive semidefinite and equal to $1$ on the diagonal of $(\X\times [-1,1])^2$. Using (\ref{sumind1}) to (\ref{sumind3}), one finds
\begin{equation*}
        \rho(x,y) = 1-\frac{1}{2\pi} \int_{-1}^{1} \int_{-1}^{1} \int_0^1 \arccos C_{t}((x,u),(y,v)) \, {\rm d}t \, {\rm d}u \, {\rm d}v,
\end{equation*}
which is the same as (\ref{sufficientrho10}).\\

\noindent \textit{Sufficiency.} Suppose that $\rho$ is given by (\ref{sufficientrho10}). Owing to the Daniell-Kolmogorov theorem, for every $t \in [0,1]$, there exists a standard Gaussian random field $Z_t$ on $\X \times [-1,1]$ having covariance $C_t$. The centered covariance of the median indicator of this random field, $\mathsf{1}_{Z_t>0}$, is \citep{McMillan1955}
\begin{equation*}
    \rho_t(x,u,y,v)=\frac{1}{2\pi} \arcsin C_t((x,u),(y,v)),
\end{equation*}
which is also the covariance of the centered indicator $\mathsf{1}_{Z_t>0}-\frac{1}{2}$. Let $Y_t$ be the random field on $\X$ defined by
$$Y_t(x,\cdot) = \int_{-1}^1 \left(\mathsf{1}_{Z_t((x,u),\cdot)>0}-\frac{1}{2}\right) \, {\rm d}u, \quad x \in \X.$$
From the representation (\ref{sufficientrho10}), it is seen that $\rho$ is the centered covariance of the random field $Y_T$, where $T$ is an independent random variable uniformly distributed in $[0,1]$. Such a random field is valued in  $[-1,1]$ and has a zero mean, therefore $\rho$ is also its non-centered covariance.

\end{proof}

\begin{proof}[Proof of Theorem \ref{complpos}]
See the proof of Theorem \ref{matrixgapineq}, in particular case 2 of the sufficiency part.\\
\end{proof}

\begin{proof}[Proof of Theorem \ref{Rplus1}]
    \noindent \textit{Necessity.} Any covariance function is symmetric and positive semidefinite. Moreover, if a random field takes non-negative values, then so does its non-centered covariance function.\medskip

    \noindent \textit{Sufficiency.} Let $x_1,\ldots,x_n$ be a set of points in $\X$ and $\rho$ a mapping satisfying the conditions of Theorem \ref{Rplus1}. Being symmetric and doubly non-negative, the matrix $\bR = [\rho(x_k,x_{\ell}]_{k,\ell=1}^n$ is completely positive if $n \leq 4$ \citep{Maxfield1962}. In such a case, $\rho$ is the non-centered covariance of the random field on $\X$ (see the proof of Theorem \ref{matrixgapineq}).
    In contrast, if $n \geq 5$, it has been shown that a doubly non-negative matrix may not be completely positive \citep{Burer2009, Strekelj2025} and therefore may not be factorizable as in (\ref{completelypositive}), i.e., there may be no random field on $\X$ having $\rho$ as its non-centered covariance.\\
\end{proof}

\begin{proof}[Proof of Theorem \ref{R0}]
Let $C: \X \times \X \to \R$ be an arbitrary symmetric positive semidefinite function. The Daniell-Kolmogorov extension theorem guarantees the existence of a zero-mean Gaussian random field $Z$ on $\X$ with covariance $C$. Using formula (A.23) of \cite{chiles_delfiner_2012}, it is seen that $\rho=\exp(C)$ is the non-centered covariance of the lognormal random field $Y$ defined by
$$Y(x,\cdot)=\exp\left(Z(x,\cdot)-\frac{C(x,x)}{2}\right), \quad x \in \X,$$ which is valued in $\R_{>0}$. Accordingly, $\rho$ is symmetric and positive and $\ln(\rho)=C$ can be any symmetric positive semidefinite function.\\
\end{proof}

\bibliography{bibcodomain}

\end{document}